\documentclass[12pt]{amsart}
\usepackage[utf8]{inputenc}
\usepackage[margin=1in]{geometry}

\usepackage[
    backend=biber,
    style=ieee,
    citestyle=numeric-comp,
    sorting=nyt,
    dashed=false
  ]{biblatex}
\addbibresource{bibtex.bib}

\usepackage{hyperref}
\usepackage[hyphenbreaks]{breakurl}
\usepackage{xurl}
\hypersetup{
    colorlinks=true,
    linkcolor=blue,
    citecolor=red,
    breaklinks=true
}

\usepackage{graphicx}
\usepackage{amsmath}
\usepackage{amssymb}
\usepackage{appendix}
\usepackage{amsthm}
\usepackage{booktabs}
\usepackage{graphicx}
\usepackage{subcaption}
\usepackage{comment}
\usepackage{enumerate}
\usepackage{esint}

\usepackage[dvipsnames]{xcolor}

\newtheorem{theorem}{Theorem}[section]
\newtheorem{corollary}{Corollary}[theorem]
\newtheorem{prop}[theorem]{Proposition}
\newtheorem{lemma}[theorem]{Lemma}

\theoremstyle{definition}
\newtheorem{definition}[theorem]{Definition}
\theoremstyle{remark}
\newtheorem{remark}[theorem]{Remark}
\newtheorem{conj}{Conjecture}

\newtheorem{question}{Question}

\usepackage{amsmath}
\usepackage{amssymb}
\usepackage{mathrsfs}

\newcommand{\R}{{\mathbb R}}

\newcommand{\F}{{\mathcal{F}}}

\newcommand{\cS}{{\mathcal{S}}}

\newcommand{\sS}{{\mathscr{S}}}
\newcommand{\fS}{{\mathfrak{S}}}
\newcommand{\tr}{{\text{tr}}}

\usepackage{todonotes}
\usepackage{orcidlink}

\subjclass[2020]{42B10, 42B15, 47G30, 47B10}

\title[Fourier-Wigner multipliers]{Fourier-Wigner multipliers and the Bochner-Riesz conjecture for Schatten class operators}

\author{Helge J. Samuelsen}
\address{Department of Mathematical Sciences,
         Norwegian University of Science and Technology,
         Trondheim, Norway}

\email{helge.j.samuelsen@ntnu.no}
\date{\today}

\begin{document}

\begin{abstract}
In this paper we introduce the notion of Fourier-Wigner multipliers for the Schatten class operators $\cS^p$, which acts as an extension of classical localisation operators in time-frequency analysis. We establish results about Fourier-Wigner multipliers through convolution with rank-one operators and Werner-Young's inequality. We are also able to prove an equivalence relation for compactly supported Fourier multipliers. This allows us to reformulate the Bochner-Riesz conjecture in terms of Schatten class operators.
\end{abstract}
\maketitle

\section{Introduction}
An important tool in classical harmonic analysis is that of Fourier multipliers. Given $m\in L^\infty(\R^{2d})$, we define a linear operator acting on $\Psi\in\mathscr{S}(\R^{2d})$ by
\[
T_m(\Psi)(z)=\int_{\R^{2d}}m(\zeta)\F_\sigma(\Psi)(\zeta)e^{-2\pi i \sigma(z,\zeta)}\,d\zeta.
\]
Here $\sigma$ denotes the standard symplectic form on $\R^{2d}$, and $\F_\sigma$ the symplectic Fourier transform defined by
\[
\F_\sigma(\Psi)(\zeta)=\int_{\R^{2d}}\Psi(z)e^{-2\pi i \sigma(\zeta,z)}\,dz.
\]
The operator $T_m$ can formally be written as 
\[
T_m\Psi=\F_\sigma(m\F_\sigma(\Psi))=\Psi*\F_\sigma(m),
\]
where $\F_\sigma(m)$ is the distributional symplectic Fourier transform of the function $m$. We denote the set of all $m\in L^\infty(\R^{2d})$ for which $T_m$ extends to a bounded linear operator from $L^p(\R^{2d})$ to $L^q(\R^{2d})$ by  $\mathcal{M}_{p,q}(\R^{2d})$. If $p=q$, we simply write $\mathcal{M}_{p,p}(\R^{2d})=\mathcal{M}_p(\R^{2d})$. A well known result states that $\mathcal{M}_{2}(\R^{2d})=L^\infty(\R^{2d})$, and $\mathcal{M}_{1}(\R^{2d})=\F_\sigma(\mathscr{M}(\R^{2d}))$, the symplectic Fourier transform of complex Radon measures. See for instance \cite{Grafakos_14,Grafakos_24,Weiss} for a more in-depth treatment of classical Fourier multipliers.

A famous problem in classical harmonic analysis related to Fourier multipliers is that of the Bochner-Riesz conjecture; for $\delta >0$, the Bochner-Riesz multiplier is given by
\[
m_\delta(z)=\max\left\{(1-|z|^2)^\delta,0\right\},
\]
and the conjecture states that $m_\delta\in \mathcal{M}_{p}(\R^{2d})$ if and only if
\[
\left|\frac{1}{p}-\frac{1}{2}\right|<\frac{2\delta+1}{4d}.
\]
This is connected to Fourier inversion of $L^p$ functions \cite{Stein_91}. We refer to \cite[Chapter $21.3$]{Mattila_15} for a more extensive presentation of the conjecture.

Tao showed that the Bochner-Riesz conjecture implies the Fourier restriction conjecture \cite{Tao_99}. In a recent paper, Luef and the author showed that the restriction conjecture on $\R^{2d}$ can be extended to a quantum version involving the restriction of a suitable Fourier transform of operators in the Schatten $p$-class $\cS^p$ \cite{Luef_Samuelsen_24,Mueller}. This paper aims to extend the Bochner-Riesz conjecture in a similar fashion, and acts as a continuation of recent work on extending classical harmonic results to the setting of quantum harmonic analysis, see \cite{Luef_Skrettingland_21,Fulsche_Luef_Werner_24,Luef_Samuelsen_24,Samuelsen_24}. 

We define the symmetric time-frequency shift $\rho:\R^{2d}\to\mathcal{L}(L^2(\R^d))$ acting on $f\in L^2(\R^d)$ at a point $z=(x,\xi)\in\R^{2d}$ by
\begin{equation}\label{eq:SchRep}
\rho(x,\xi)f(t)=e^{-\pi i x\cdot\xi}e^{2\pi i\xi\cdot t}f(t-x).
\end{equation}
This is a projective unitary representation of phase space acting on $L^2(\R^d)$. Given a trace class operator $T\in \cS^1$, the Fourier-Wigner transform of $T$ at a point $z\in\R^{2d}$ is defined by
\begin{equation}\label{eq:DefFW}
\F_W(T)(z):=\tr(T\rho(z)^*)=\tr(T\rho(-z)).
\end{equation}
The Fourier-Wigner transform of a trace class operator is a continuous function on phase space vanishing at infinity by an analogue of the Riemann-Lebesgue lemma \cite{Werner}. Moreover, it extends to a $*$-isometric isomorphism between the Hilbert-Schmidt operators and $L^2(\R^{2d})$ due to a result by Pool \cite{Pool}. The inverse is given by
\[
\F_W^{-1}(F)=\int_{\R^{2d}}F(z)\rho(z)\,dz, \quad F\in L^1(\R^{2d}),
\]
where the integral is considered weakly, and $F$ is called the spreading function of the operator. The inverse Fourier-Wigner transform is well-studied in \cite{Folland_Phase} where it goes by the integrated Schr\"{o}dinger representation of $F$.

For $m\in L^\infty(\R^{2d})$ we define the analogue of the symplectic Fourier multipliers, which we call Fourier-Wigner multipliers, by
\[
\mathfrak{T}_m(T):= \F_W^{-1}(m\F_W(T))=\int_{\R^{2d}}m(z)\F_W(T)(z)\rho(z)\,dz,
\]
on the class $\cS^1$. Here the integral is considered weakly. Fourier multipliers have been studied for noncommutative spaces \cite{Ruzhansky2024multipliers}, where $L^p$ to $L^q$ estimates were established whenever the multiplier is in a certain Lorentz space. Fourier multipliers is also used in \cite{Lai25} to study the Riesz transform on quantum Euclidean spaces.

When $T$ is a rank-one operator, and $m=\chi_\Omega$ for $\Omega\subseteq \R^{2d}$, the Fourier-Wigner multiplier corresponds to the classical localisation operator first introduced by Daubechies \cite{Daubechies}. Localisation operators have been studied in detail and seen applications to signal processing \cite{Cordero, Kreme, Olivero, Rajhamshi, Strohmer}.

We denote by $\mathfrak{M}_{p,q}(\R^{2d})$ the set of all $m\in L^\infty(\R^{2d})$ where $\mathfrak{T}_m$ extends to a bounded linear operator from $\cS^p$ to $\cS^q$. As in the classical case, we write $\mathfrak{M}_{p,p}(\R^{2d})=\mathfrak{M}_{p}(\R^{2d})$. Whenever $m$ is compactly supported, we have the following identification.
\begin{theorem}\label{thm:MainThm}
Let $m\in L^\infty(\R^{2d})$ be compactly supported. Then for each $1\leq p,q\leq \infty$ it follows that $m\in \mathcal{M}_{p,q}(\R^{2d})$ if and only if $m\in \mathfrak{M}_{p,q}(\R^{2d})$. 
\end{theorem}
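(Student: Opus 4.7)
My plan is to use the quantum harmonic analysis framework, specifically Werner's convolutions between operators and functions, together with Werner--Young's inequality. Fix a normalised Gaussian $\varphi\in\mathscr{S}(\R^d)$ and set $P:=\varphi\otimes\varphi\in\cS^1$, so that $\F_W(P)$ is a nowhere-vanishing Gaussian. Pick $\eta\in C_c^\infty(\R^{2d})$ with $\eta\equiv 1$ on $\operatorname{supp}(m)$ and define $Q:=\F_W^{-1}(\eta/\F_W(P))$; since $\eta/\F_W(P)\in C_c^\infty(\R^{2d})$, the operator $Q$ has Schwartz spreading function and therefore lies in $\cS^1$. I will write $\star$ for Werner's operator--function and operator--operator convolutions, which satisfy $\F_W(f\star T)=\F_\sigma(f)\F_W(T)$ and $\F_\sigma(T\star S)=\F_W(T)\F_W(S)$.

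The engine of the proof is the intertwining identity
\[
\mathfrak{T}_m(T)\star P=T_m(T\star P),
\]
which is immediate upon applying $\F_\sigma$, since both sides equal $m\F_W(T)\F_W(P)$. Because $\F_W(P)\F_W(Q)=\eta$, a Fourier-side computation yields two reconstruction identities: $T=(T\star P)\star Q$ whenever $\F_W(T)$ is supported in $\operatorname{supp}(m)$, and $f=(f\star Q)\star P$ whenever $\F_\sigma(f)$ is supported in $\operatorname{supp}(m)$.

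For the direction $\mathcal{M}_{p,q}\Rightarrow\mathfrak{M}_{p,q}$, observe that $\F_W(\mathfrak{T}_m(T))=m\F_W(T)$ lives inside $\operatorname{supp}(m)$, so the first reconstruction gives $\mathfrak{T}_m(T)=T_m(T\star P)\star Q$; applying Werner--Young twice together with the $L^p\to L^q$ boundedness of $T_m$ produces the bound $\|\mathfrak{T}_m(T)\|_{\cS^q}\lesssim\|T\|_{\cS^p}$. For the converse $\mathfrak{M}_{p,q}\Rightarrow\mathcal{M}_{p,q}$, given $f\in L^p$ first pass to $\tilde f:=f*\F_\sigma(\eta)$; this is bounded on $L^p$ by ordinary Young's inequality (since $\F_\sigma(\eta)\in\mathscr{S}$), satisfies $T_m\tilde f=T_mf$ because $m\eta=m$, and has $\F_\sigma(\tilde f)$ compactly supported in $\operatorname{supp}(\eta)$. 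Then $T:=\tilde f\star Q\in\cS^p$ satisfies $T\star P=\tilde f$, and the intertwining produces $T_mf=\mathfrak{T}_m(T)\star P$; a final application of Werner--Young gives $\|T_mf\|_{L^q}\lesssim\|f\|_{L^p}$.

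The main obstacle will be rigorously justifying the intertwining and reconstruction identities for operators $T\in\cS^p$ with $p\notin\{1,2\}$, where $\F_W(T)$ is not a priori a classical function, as well as for $f\in L^p$ where $\F_\sigma(f)$ is only a tempered distribution. The resolution is to establish the identities first on a dense subclass of operators with Schwartz spreading function (respectively Schwartz functions), and then extend by continuity using the Werner--Young estimates; the compact support of $m$ and the smoothing nature of convolution with $P$ and $Q$ ensure every quantity remains well-defined in the limit.
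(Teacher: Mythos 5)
Your argument is correct in outline and reaches the conclusion by the same two ingredients the paper uses: the intertwining identity $\mathfrak{T}_m(T)\star S = T_m(T\star S)$ (Lemma~\ref{lem:Commutation}) and Werner--Young's inequality. Where you diverge is in how you obtain the norm equivalence for band-limited objects. The paper invokes the quantum Wiener division lemma (Lemma~\ref{LW-bnd-cor}, cited from \cite{Samuelsen_24}), which produces $L^2$-normalised $g,h\in\sS(\R^d)$ with $\|T\|_{\cS^p}\leq C(\Omega)\|T\star(g\otimes h)\|_{L^p}$ whenever $\F_W(T)$ is supported in $\Omega$, together with the companion estimate for functions. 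You instead build the required ``inverse'' explicitly: since $\F_W(P)=\mathcal{A}(\varphi,\varphi)$ is a nowhere-vanishing Gaussian, the quotient $\eta/\F_W(P)$ is compactly supported and smooth, so $Q=\F_W^{-1}(\eta/\F_W(P))\in\fS\subseteq\cS^1$, and the reconstruction identities $T=(T\star P)\star Q$ (for $\F_W(T)$ supported where $\eta\equiv 1$) and $f=(f\star Q)\star P$ follow from a one-line Fourier-side computation. Combined with Werner--Young this reproves Lemma~\ref{LW-bnd-cor} inline, so your route is self-contained and arguably more elementary than citing the division lemma as a black box.

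One small inaccuracy in the converse direction: the asserted identity $T\star P=\tilde f$ is not literally true. Applying $\F_\sigma$ to the left side gives $\F_\sigma(\tilde f)\F_W(Q)\F_W(P)=\F_\sigma(f)\,\eta^2$, whereas $\F_\sigma(\tilde f)=\F_\sigma(f)\,\eta$, and $\eta^2\neq\eta$ for a smooth cutoff. The conclusion survives anyway, because what you actually need is $T_m(T\star P)=T_mf$, which holds since $m\eta^2=m$ (as $\eta\equiv 1$ on $\operatorname{supp}(m)$); alternatively, define $Q$ with a second cutoff $\eta_2$ satisfying $\eta_2\equiv 1$ on $\operatorname{supp}(\eta)$, after which $T\star P=\tilde f$ holds verbatim. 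Your plan to establish the identities first on Schwartz-class operators and extend by density is the right mechanism and is exactly what the distributional framework of Proposition~\ref{ConvProp-Distribution} is set up to support.
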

Theorem \ref{thm:MainThm} allows us to extend well-known classical Fourier multiplier results to the quantum setting for compactly supported multipliers. For instance, combining the ball multiplier theorem of Fefferman \cite{Fefferman_71} with Theorem \ref{thm:MainThm} gives the following extension of Fefferman's ball multiplier theorem.
\begin{corollary}
Let $m=\chi_{B_1(0)}$ be the characteristic function of the unit ball in $\R^{2d}$. Then $m\in \mathfrak{M}_p(\R^{2d})$ if and only if $p=2$.
\end{corollary}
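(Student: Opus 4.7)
The plan is to deduce this as an essentially immediate consequence of Theorem \ref{thm:MainThm} combined with the classical ball multiplier theorem of Fefferman. The symbol $m=\chi_{B_1(0)}$ is bounded (with $\|m\|_\infty=1$) and has compact support equal to the closed unit ball $\overline{B_1(0)}\subset\R^{2d}$, so the hypotheses of Theorem \ref{thm:MainThm} apply with $p=q$. Hence
\[
m\in\mathfrak{M}_p(\R^{2d})\;\Longleftrightarrow\;m\in\mathcal{M}_p(\R^{2d}).
\]
It then suffices to characterise the classical membership. Fefferman's theorem \cite{Fefferman_71} asserts that in any ambient dimension $n\geq 2$, the indicator of the unit ball belongs to $\mathcal{M}_p(\R^n)$ if and only if $p=2$. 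Since $d\geq 1$ implies $2d\geq 2$, this applies directly to $\R^{2d}$ and yields the stated equivalence on the classical side, which transfers to $\mathfrak{M}_p$ via Theorem \ref{thm:MainThm}.

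For completeness I would also record the $p=2$ case intrinsically: by Pool's theorem the Fourier-Wigner transform $\F_W$ is a unitary isomorphism from $\cS^2$ onto $L^2(\R^{2d})$, so $\mathfrak{T}_m=\F_W^{-1}\circ M_m\circ\F_W$ with $M_m$ pointwise multiplication. Since $m\in L^\infty$, $M_m$ is bounded on $L^2(\R^{2d})$, which confirms $m\in\mathfrak{M}_2(\R^{2d})$ and matches the trivial classical identity $\mathcal{M}_2(\R^{2d})=L^\infty(\R^{2d})$ mentioned in the introduction.

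The only potentially delicate point is a technical one, namely checking that $m=\chi_{B_1(0)}$ satisfies whatever notion of compact support is used in the statement of Theorem \ref{thm:MainThm} (bounded measurable with essential support in a compact set). This is immediate since $\chi_{B_1(0)}\in L^\infty(\R^{2d})$ vanishes identically outside $\overline{B_1(0)}$. There is no genuine obstacle beyond this verification: the corollary is a direct application of the transfer theorem to a single, explicitly compactly supported symbol, with Fefferman's theorem supplying both the ``only if'' direction (failure for $p\neq 2$) and consistency of the ``if'' direction (boundedness for $p=2$).
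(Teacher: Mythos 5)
Your proposal matches the paper's own argument exactly: the corollary is derived by combining Theorem \ref{thm:MainThm} (which applies since $\chi_{B_1(0)}$ is bounded and compactly supported) with Fefferman's ball multiplier theorem in $\R^{2d}$, $2d\geq 2$. The extra verification of the $p=2$ case via Pool's theorem is consistent with Proposition \ref{2-multipliers} and is not needed but does no harm.
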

As the Bochner-Riesz multiplier is also supported on the unit ball, we can reformulate the Bochner-Riesz conjecture in terms of Fourier-Wigner multipliers.
\begin{conj}[Quantum Bochner-Riesz conjecture]
For $\delta>0$, the multiplier $m_\delta$ is in $\mathfrak{M}_p(\R^{2d})$ if and only if
\[
\left|\frac{1}{p}-\frac{1}{2}\right|<\frac{2\delta+1}{4d}
\]
\end{conj}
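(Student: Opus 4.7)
The plan is to reduce the quantum Bochner-Riesz conjecture directly to its classical counterpart by invoking Theorem \ref{thm:MainThm}. Since $m_\delta(z)=\max\{(1-|z|^2)^\delta,0\}$ is supported in the closed unit ball of $\R^{2d}$, it is compactly supported and bounded, and taking $p=q$ in Theorem \ref{thm:MainThm} yields
\[
m_\delta\in\mathfrak{M}_p(\R^{2d})\iff m_\delta\in\mathcal{M}_p(\R^{2d}).
\]
Since the proposed range $|1/p-1/2|<(2\delta+1)/(4d)$ coincides exactly with the range conjectured classically in ambient dimension $n=2d$, the quantum conjecture is logically equivalent to the classical Bochner-Riesz conjecture on $\R^{2d}$.

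The main obstacle is therefore entirely classical: the Bochner-Riesz conjecture remains open for $n\geq 3$, i.e.\ $d\geq 2$ here, so a full resolution of the quantum version cannot be expected without either new progress on the classical problem or a genuinely operator-theoretic argument that bypasses it. Everything that is currently known classically transfers, however. The necessity direction is unconditional via Knapp-type examples and therefore establishes the ``only if'' half of the quantum conjecture in every dimension; and for $\delta>(2d-1)/2$ the standard Bessel asymptotics give $\F_\sigma(m_\delta)\in L^1(\R^{2d})$, so that $m_\delta\in\mathcal{M}_p(\R^{2d})$ for every $1\leq p\leq\infty$, and Theorem \ref{thm:MainThm} then gives the same for $\mathfrak{M}_p(\R^{2d})$.

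In the low-dimensional case $d=1$ the Carleson-Sjölin theorem resolves the classical conjecture on $\R^{2}$, and Theorem \ref{thm:MainThm} promotes this to the quantum Bochner-Riesz conjecture for $\mathfrak{M}_p(\R^{2})$ as a free corollary. For $d\geq 2$ the best known partial results for the classical problem---Stein-Tomas, Tao's bilinear restriction estimates, Bourgain-Guth, Guth's polynomial-method approach, and the Guth-Hickman-Iliopoulou refinement---each feed through the same reduction to produce partial quantum multiplier bounds on their matching ranges of $(p,\delta)$. Because the equivalence in Theorem \ref{thm:MainThm} is two-sided for compactly supported symbols, any genuinely new bound proved directly in $\cS^p$ by exploiting the noncommutative structure would automatically yield a new classical Bochner-Riesz bound, which is the sharpest indication of how hard a purely quantum attack must be.
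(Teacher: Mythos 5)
Your reduction via Theorem \ref{thm:MainThm} is exactly the reasoning the paper uses: since $m_\delta$ is bounded and supported in the unit ball, the compactly-supported equivalence with $p=q$ translates the quantum conjecture directly into the classical Bochner-Riesz conjecture on $\R^{2d}$, which is why the paper states it as a conjecture rather than a theorem. Your added observations---that the necessity half and the $d=1$ (Carleson-Sj\"{o}lin) case resolve unconditionally, and that all known classical partial progress transfers bidirectionally---are correct consequences of the same equivalence that the paper leaves implicit.
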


Using that the quantum Bochner-Riesz conjecture is equivalent to the classical Bochner-Riesz conjecture on phase space by Theorem \ref{thm:MainThm}, it follows by Corollary $1.3$ in \cite{Tao_99} that the quantum Bochner-Riesz conjecture in fact implies the classical restriction conjecture on $\R^{2d}$. The quantum and classical restriction problems on phase space have been shown to be equivalent for compactly supported measures \cite{Luef_Samuelsen_24}. As such, we have the following implication.
\begin{corollary}
The quantum Bochner-Riesz conjecture implies the quantum Fourier restriction conjecture for the unit sphere.
\end{corollary}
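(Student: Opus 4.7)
The plan is to concatenate three already-available implications, so that essentially no new analytic work is required beyond verifying the hypotheses of each step. First, I would apply Theorem \ref{thm:MainThm} to the Bochner-Riesz multiplier $m_\delta$. Since $m_\delta$ is supported in the closed unit ball of $\R^{2d}$ and is hence compactly supported, the theorem identifies $m_\delta\in\mathfrak{M}_p(\R^{2d})$ with $m_\delta\in\mathcal{M}_p(\R^{2d})$ for every $1\le p\le \infty$. Consequently, the quantum Bochner-Riesz conjecture is logically equivalent at each parameter pair $(p,\delta)$ to the classical Bochner-Riesz conjecture on $\R^{2d}$.

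Second, I would invoke \cite[Corollary 1.3]{Tao_99}, which shows that the classical Bochner-Riesz conjecture on $\R^n$ implies the classical Fourier restriction conjecture for the unit sphere $\bS^{n-1}$. Taking $n=2d$ delivers the classical restriction conjecture for $\bS^{2d-1}\subset\R^{2d}$ from the classical Bochner-Riesz conjecture obtained in the first step.

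Third, to pass from the classical restriction conjecture on phase space to the quantum one, I would appeal to the equivalence between the classical and quantum restriction problems for compactly supported measures established in \cite{Luef_Samuelsen_24}. Since the surface measure of $\bS^{2d-1}$ is compactly supported, that equivalence applies directly and promotes the classical restriction conjecture for $\bS^{2d-1}$ to the quantum restriction conjecture for the same surface, closing the chain. The one point that requires care, and which I view as the main (albeit minor) obstacle, is bookkeeping of the exponent ranges across the three reductions: Theorem \ref{thm:MainThm} preserves $p$, Tao's deduction yields the standard restriction range associated with $\bS^{2d-1}$, and the equivalence in \cite{Luef_Samuelsen_24} is index-preserving, so once this verification is carried out explicitly, the implication follows with no remaining analytic content.
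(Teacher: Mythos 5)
Your proposal matches the paper's argument exactly: Theorem \ref{thm:MainThm} gives the equivalence of the quantum and classical Bochner-Riesz conjectures (since $m_\delta$ is supported in the unit ball), Tao's Corollary $1.3$ in \cite{Tao_99} passes from classical Bochner-Riesz to the classical restriction conjecture on $\R^{2d}$, and the equivalence of the classical and quantum restriction problems for compactly supported measures from \cite{Luef_Samuelsen_24} completes the chain. This is precisely the three-step concatenation the paper uses.
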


\section{Preliminaries}
\subsection{Time-frequency analysis}
Let $\mu$ be a finite Borel measure on $\R^{2d}$, and define the symplectic Fourier transform at a point $\zeta\in\R^{2d}$ by
\[
\F_\sigma(\mu)(\zeta)=\int_{\R^{2d}}e^{-2\pi i \sigma(\zeta,z)}d\mu(z)\in C_b(\R^{2d}).
\]
Here $\sigma$ denotes the standard symplectic form on $\R^{2d}$ given by
\[
\sigma\left((x,\xi),(x',\xi')\right)=x'\cdot\xi-x\cdot\xi',\qquad (x,\xi),(x',\xi')\in\R^{2d}.
\]
The symplectic Fourier transform extends to a unitary operator on $L^{2}(\R^{2d})$ with the property that $\F_\sigma^2=\mathrm{Id}_{L^2}$.

Define the symmetric time-frequency shift $\rho:\mathbb{R}^{2d}\to \mathcal{L}\left(L^2(\R^d)\right)$  by \eqref{eq:SchRep}. Then it gives rise to a time-frequency distribution known as the \textit{cross-ambiguity function}. Namely, given $f,g\in L^2(\R^d)$, the cross-ambiguity function is defined as
\begin{equation}\label{AmbiguityDef}
\mathcal{A}(f,g)(z):=\langle f,\rho(z)g\rangle_{L^2},\qquad z\in\R^{2d}.
\end{equation}
Moyal's identity ensures that $\mathcal{A}(f,g)\in L^2(\R^{2d})$ whenever $f,g\in L^2(\R^d)$ \cite[Chapter $3$]{Grochenig}. 
If $f$ and $g$ are both the $L^2$-normalised Gaussian $\varphi_0(t)=2^{d/4}\exp(-\pi|t|^2)$, then the cross-ambiguity function is the Gaussian,
\begin{equation}\label{Ambi-Gauss}
\mathcal{A}\left(\varphi_0,\varphi_0\right)(z)=e^{-\pi\frac{|z|^2}{2}},
\end{equation}
with $z\in\R^{2d}$. The cross-ambiguity function satisfies a covariance property.
\begin{lemma}[Lemma $2.5$ in \cite{Luef_Samuelsen_24}]\label{Covariance property}
    Let $f,g\in L^2(\R^d)$. Then for any $z,\zeta\in \R^{2d}$
    \begin{equation*}
        \mathcal{A}(\rho(\zeta)f,g)(z)=e^{\pi i \sigma(\zeta,z)}\mathcal{A}(f,g)(z-\zeta).
    \end{equation*}
\end{lemma}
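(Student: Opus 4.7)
The plan is to reduce the identity to a cocycle calculation for the projective representation $\rho$. Using the definition \eqref{AmbiguityDef} of the cross-ambiguity function and unitarity of $\rho(\zeta)$,
\[
\mathcal{A}(\rho(\zeta)f,g)(z)=\langle\rho(\zeta)f,\rho(z)g\rangle_{L^2}=\langle f,\rho(\zeta)^{\ast}\rho(z)g\rangle_{L^2},
\]
so the task becomes identifying the operator $\rho(\zeta)^{\ast}\rho(z)$ up to a scalar factor.

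First I would verify that $\rho(\zeta)^{\ast}=\rho(-\zeta)$, which is a quick change of variables in $\langle\rho(\zeta)h,f\rangle_{L^2}$ using the explicit formula \eqref{eq:SchRep}. The crux is then computing the composition $\rho(-\zeta)\rho(z)$ directly from \eqref{eq:SchRep}: writing $\zeta=(y,\eta)$ and $z=(x,\xi)$ and applying the two symmetric shifts in succession produces an operator of the form (phase)$\cdot\rho(z-\zeta)$. Collecting the two normalisation phases together with the interaction term $e^{2\pi i\xi\cdot y}$ coming from pushing the modulation through the translation, and comparing with the normalisation $e^{-\pi i(x-y)\cdot(\xi-\eta)}$ already built into $\rho(z-\zeta)$, the leftover factor simplifies to the symplectic phase $e^{-\pi i(x\cdot\eta-y\cdot\xi)}=e^{-\pi i\sigma(\zeta,z)}$. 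Hence $\rho(\zeta)^{\ast}\rho(z)=e^{-\pi i\sigma(\zeta,z)}\rho(z-\zeta)$.

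Plugging this cocycle back into the inner product above and invoking conjugate-linearity of $\langle\cdot,\cdot\rangle_{L^2}$ in its second slot turns the scalar $e^{-\pi i\sigma(\zeta,z)}$ into its complex conjugate $e^{\pi i\sigma(\zeta,z)}$, giving
\[
\mathcal{A}(\rho(\zeta)f,g)(z)=e^{\pi i\sigma(\zeta,z)}\langle f,\rho(z-\zeta)g\rangle_{L^2}=e^{\pi i\sigma(\zeta,z)}\mathcal{A}(f,g)(z-\zeta),
\]
which is the claim. The only real obstacle is bookkeeping the signs through the composition, since three different normalisation factors must cancel almost completely to reveal the symplectic form; once the cocycle identity $\rho(\zeta)^{\ast}\rho(z)=e^{-\pi i\sigma(\zeta,z)}\rho(z-\zeta)$ has been extracted, the covariance property is immediate.
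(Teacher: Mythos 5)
Your argument is correct, and it is the standard route: once you establish the composition law $\rho(\zeta)^{\ast}\rho(z)=e^{-\pi i\sigma(\zeta,z)}\rho(z-\zeta)$ from the explicit formula \eqref{eq:SchRep}, the covariance identity follows immediately from conjugate-linearity of the inner product in the second argument. The paper itself does not reprove this lemma but cites Lemma 2.5 of \cite{Luef_Samuelsen_24}, and your cocycle computation (including the sign bookkeeping: the cross term $e^{2\pi i\xi\cdot y}$ combining with the two normalisation phases $e^{-\pi iy\cdot\eta}e^{-\pi ix\cdot\xi}$ against $e^{-\pi i(x-y)\cdot(\xi-\eta)}$ to leave $e^{-\pi i\sigma(\zeta,z)}$) is exactly the calculation that reference carries out.
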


\subsection{Distributions}
We denote the Schwartz class by $\sS(\R^{2d})$, and its continuous dual space of tempered distributions by $\sS'(\R^{2d})$. For $\tau\in\sS'(\R^{2d})$ and $\psi\in \sS(\R^{2d})$, we use the sesquilinear dual pairing
\[
\langle \tau,\psi\rangle_{\sS',\sS}:=\tau(\overline{\psi}),
\]
in order to stay consistent with the $L^2$ inner product. Namely, for any $f\in L^2(\R^{2d})$,
\[
\langle f,\psi\rangle_{\sS',\sS}=\int_{\R^{2d}}f(x)\overline{\psi(x)}\,dx=\langle f,\psi\rangle_{L^2},
\]
for all $\psi\in \sS(\R^{2d})$. The symplectic Fourier transform can be extended to an isomorphism on the tempered distributions through
\[
\langle \F_\sigma(\tau),\psi\rangle_{\sS',\sS}:=\langle \tau,\F_\sigma(\psi)\rangle_{\sS',\sS},
\]
for $\tau\in \sS'(\R^{2d})$ and all $\psi\in \sS(\R^{2d})$. Moreover, by restricting the windows to the Schwartz class, the cross-ambiguity function can be extended to be valid for tempered distributions through
\[
\mathcal{A}(\tau,\psi)(z):=\langle \tau,\rho(z)\psi\rangle_{\sS',\sS},
\]
for $\tau\in\sS'(\R^d)$ and $\psi\in \sS(\R^d)$.

Modulation spaces are of great importance in the theory of time-frequency analysis. For $1\leq p,q\leq\infty$, we define the modulation space $M^{p,q}(\R^d)$ through the ambiguity function by considering all tempered distributions $\tau\in\sS'(\R^d)$ such that,
\[
\|\tau\|_{M^{p,q}}:=\|\mathcal{A}(\tau,\varphi_0)\|_{L^q_\xi L^p_x}<\infty,
\]
where $\varphi_0(t)=2^{d/4}\exp{(-\pi|t|^2)}$ is the $L^2$-normalised Gaussian. A standard reference for modulation spaces is \cite[Chapter $12$]{Grochenig}.

\subsection{Operator theory}
We denote the space of bounded linear operators on $L^2(\R^d)$ by $\mathcal{L}(L^2(\R^d))$ and compact operators by $\mathcal{K}$. Any compact operator $T$ can be represented as 
\[
T=\sum_{n\in\mathbb{N}}s_n(T)e_n\otimes \eta_n,
\]
where $e_n\otimes \eta_n (f)=\langle f,\eta_n\rangle e_n$ is a rank-one operator. This is known as the singular value decomposition of $T$, where $\{s_n(T)\}\subseteq [0,\infty)$ are the singular values of $T$ and $\{e_n\}_{n\in\mathbb{N}},\{\eta_n\}_{n\in\mathbb{N}}\subseteq L^2(\R^d)$ are two orthonormal families. Given $1\leq p\leq \infty$, the Schatten $p$-class of compact operators can be defined through the singular value decomposition by
\[
\cS^p:=\{T\in\mathcal{L}(L^2(\R^d)):\{s_n(T)\}_{n\in\mathbb{N}}\in \ell^p\}.
\]
Whenever $p=\infty$, we make the identification $\cS^\infty=\mathcal{L}(L^2(\R^d))$. Since the Schatten $p$-classes are defined through $\ell^p$, the following inclusion property holds; $\cS^1\subseteq \cS^p\subseteq \cS^q\subseteq \mathcal{K}$ for $q>p$. The Schatten $p$-class can be equipped with the norm
\[
\|T\|_{\cS^p}:=\left(\sum_{n\in\mathbb{N}}|s_n(T)|^p\right)^\frac{1}{p}, \qquad 1\leq p<\infty.
\]
This norm turns $\cS^p$ into a Banach space, and the associated dual space is identified with $\cS^{p'}$, where $p'$ is the H\"{o}lder conjugate of $p$ whenever $1<p< \infty$. A version of the Riesz-Thorin interpolation theorem for Schatten classes can be found in \cite[Theorem $7.13.2$]{Sukochev_NCInt}, and we refer to \cite[Chapter $3$]{SimonOp} or \cite{SimonTrace} for more details on Schatten classes. 
\begin{theorem}[Riesz-Thorin Interpolation for Operators]\label{thm:RTInter}
Let $1\leq p_i,q_i\leq \infty$ for $i=0,1$. Suppose $\mathfrak{T}$ is a linear operator from $\cS^{p_0}+\cS^{p_1}$ into $\cS^{q_0}+\cS^{q_1}$ with the estimate
\[
\|\mathfrak{T}(T)\|_{\cS^{q_i}}\leq C_i\|T\|_{\cS^{p_i}}\qquad \text{for } i=0,1.
\]
Then for $0<\theta<1$ and
\[
\frac{1}{p}=\frac{1-\theta}{p_0}+\frac{\theta}{p_1},\qquad \frac{1}{q}=\frac{1-\theta}{q_0}+\frac{1}{q_1},
\]
we have
\[
\|\mathfrak{T}(T)\|_{\cS^{q}}\leq C_0^{1-\theta} C_1^{\theta}\|T\|_{\cS^{p}}.
\]
\end{theorem}

A further generalisation of Schatten classes are the Lorentz Schatten classes of compact operators on $L^2(\R^d)$. The Lorentz quasi-norm on a general measure space $(X,\mathcal{F},\mu)$ is defined by
\[
\|f\|_{L^{p,q}}:=\left(\frac{q}{p}\int_0^\infty t^{\frac{q}{p}-1}|\inf_{\alpha>0}\{\mu(|f(x)|>\alpha|)\leq t\}|^q\,dt\right)^\frac{1}{q},
\]
for $1\leq p,q<\infty$. For $q=\infty$, the quasi-norm is defined by
\[
\|f\|_{p,\infty}=\|f\|_{p,w}=\sup_{t\geq 0}\left(t^{\frac{1}{p}}\inf_{\alpha>0}\{\mu(|f(x)|>\alpha|)\leq t\}\right),
\]
and coincides with the weak $L^p$ spaces. We identify $L^{\infty,\infty}(X)$ with $L^\infty(X)$. The Lorentz spaces are nested as 
\[
\|f\|_{L^{p,q_2}(X)}\leq \|f\|_{L^{p,q_1}(X)}
\]
for $q_1\leq q_2$, see for instance \cite{Weiss}. We also mention H\"{o}lder's inequality for Lorentz spaces.
\begin{theorem}[Theorem $3.4$ in \cite{ONeil}]
Let $f\in L^{p_1,q_1}(X)$ and $g\in L^{p_2,q_2}(X)$ for
\[
\frac{1}{p_1}+\frac{1}{p_2}<1,\qquad 0<q_1,q_2\leq\infty.
\]
Then $fg\in L^{p,q}(X)$ where
\[
\frac{1}{p}=\frac{1}{p_1}+\frac{1}{p_2},\qquad \frac{1}{q}\leq\frac{1}{q_1}+\frac{1}{q_2}.
\]
Moreover, there exists $C=C(p_1,p_2,q_1,q_2)>0$ such that
\[
\|fg\|_{L^{p,q}}\leq C\|f\|_{L^{p_1,q_1}}\|g\|_{L^{p_2,q_2}}.
\]
\end{theorem}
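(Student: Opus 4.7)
My plan is to reduce the inequality to a one-variable H\"older estimate by passing to decreasing rearrangements. Denoting $f^\ast(t):=\inf\{\alpha>0:\mu(|f|>\alpha)\leq t\}$, the Lorentz quasi-norm defined in the paper can be rewritten as
\[
\|f\|_{L^{p,q}}=\left(\tfrac{q}{p}\right)^{1/q}\left(\int_0^\infty\bigl(t^{1/p}f^\ast(t)\bigr)^q\,\tfrac{dt}{t}\right)^{1/q},
\]
with the usual supremum modification when $q=\infty$. The geometric ingredient will be the submultiplicative bound $(fg)^\ast(2t)\leq f^\ast(t)g^\ast(t)$, which follows instantly from the inclusion $\{|fg|>\alpha\beta\}\subseteq\{|f|>\alpha\}\cup\{|g|>\beta\}$ evaluated at $\alpha=f^\ast(t)$ and $\beta=g^\ast(t)$.

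I would first treat the sharp case $\tfrac{1}{q_1}+\tfrac{1}{q_2}=\tfrac{1}{q}$. The substitution $t\mapsto 2t$, combined with the submultiplicative bound and the splitting $t^{1/p}=t^{1/p_1}t^{1/p_2}$, yields
\[
\|fg\|_{L^{p,q}}^{q}\lesssim\int_0^\infty\bigl(t^{1/p_1}f^\ast(t)\bigr)^{q}\bigl(t^{1/p_2}g^\ast(t)\bigr)^{q}\,\tfrac{dt}{t}.
\]
Because $q/q_1+q/q_2=1$, the classical H\"older inequality on $L^q(dt/t)$ with exponents $(q_1/q,q_2/q)$ applies and factors the integral into $\|f\|_{L^{p_1,q_1}}^{q}$ and $\|g\|_{L^{p_2,q_2}}^{q}$ up to the constants from the above norm equivalence; taking $q$-th roots closes the borderline case.

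For the general hypothesis $\tfrac{1}{q}\leq\tfrac{1}{q_1}+\tfrac{1}{q_2}$ I would introduce the auxiliary exponent $r$ defined by $\tfrac{1}{r}=\tfrac{1}{q_1}+\tfrac{1}{q_2}$, observe that $r\leq q$, and invoke the nesting $\|fg\|_{L^{p,q}}\leq\|fg\|_{L^{p,r}}$ recorded earlier in the paper to reduce to the borderline case at parameter $r$. Extremal cases in which any of $q_1,q_2,q$ equal $\infty$ are handled by replacing the relevant $L^{q_i}(dt/t)$ norm with an essential supremum, and the degenerate possibilities $p_1=\infty$ or $p_2=\infty$ reduce immediately to the pointwise estimate $(fg)^\ast(t)\leq\|f\|_{\infty}g^\ast(t)$. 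The main technical obstacle is not any individual step but rather keeping track of all the constants---the $(q/p)^{1/q}$ from the norm equivalence, the $2^{1/p}$ from the dyadic substitution, and the H\"older constant---which together furnish the explicit $C(p_1,p_2,q_1,q_2)$ in the statement. This is essentially O'Neil's original argument from the cited reference.
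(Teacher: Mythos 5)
The paper does not prove this theorem---it cites it directly as Theorem 3.4 of O'Neil---so there is no in-paper argument to compare against. Your blind proof is correct and is the standard route: the submultiplicativity $(fg)^\ast(2t)\leq f^\ast(t)g^\ast(t)$ (which does follow from the distribution-set inclusion together with right-continuity of the distribution function, giving $\mu(|f|>f^\ast(t))\leq t$), the factorisation $t^{1/p}=t^{1/p_1}t^{1/p_2}$, classical H\"older on $L^1(dt/t)$ with exponents $q_1/q$ and $q_2/q$ in the borderline case $1/q=1/q_1+1/q_2$, and nesting of Lorentz spaces in the second index to handle $1/q\leq 1/q_1+1/q_2$. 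The degenerate cases $q_i=\infty$ or $p_i=\infty$ are handled correctly by the supremum modifications you indicate.

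Two small remarks, neither a gap. First, the condition $1/p_1+1/p_2<1$ is not actually used in your estimate; it is there to ensure $p>1$ so that $L^{p,q}$ is normable, which matters for the statement but not for the quasi-norm inequality you prove. Second, the constant you collect depends on the nesting constant $\|\cdot\|_{L^{p,q}}\leq C(p,r,q)\|\cdot\|_{L^{p,r}}$, which is not $1$ in general under the $(q/p)^{1/q}$ normalisation the paper uses; the paper's stated nesting inequality with constant $1$ is a slight oversimplification, but this only affects the value of $C(p_1,p_2,q_1,q_2)$, not its finiteness.
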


Restricting to Lorentz sequence spaces, and using that singular values are arranged in a non-increasing order, allows us to define the Lorentz Schatten classes $\cS^{p,q}$ by the quasi-norms,
\[
\|T\|_{\cS^{p,q}}:=\left(\sum_{n=1}^\infty n^{\frac{q}{p}-1}|s_n(T)|^q\right)^\frac{1}{q},
\]
and the weak $S^{p,\infty}$-quasi-norm
\[
\|T\|_{\cS^{p,\infty}}:=\sup_{n\in\mathbb{N}}\{n^\frac{1}{p}s_{n}(T)\}.
\]

There is a useful version of the Hunt-Marcinkiewicz interpolation theorem for Schatten class operators. This was first proved for weak Schatten classes by Simon \cite{SimonInterpolation}, and can also be found in \cite[Theorem $2.10$]{SimonTrace}. For two fixed orthonormal families $\{\varphi_n\},\{\psi_n\}\subseteq L^2(\R^d)$, the idea of Simon is first to restrict to the closed subspace $\cS^{p,q}(\{\varphi_n\},\{\psi_n\})\subseteq \cS^{p,q}$ of operators of the form
\[
T=\sum_{n=1}^\infty s_n(T)\varphi_{n}\otimes \psi_n.
\]
From here we can define a linear isomorphism between $\cS^{p,q}(\{\varphi_n\},\{\psi_n\})$ and $\ell^{p,q}$, and the problem is reduced to the classical Hunt-Marcinkiewicz interpolation theorem for Lorentz spaces, which can be found in \cite{Hunt,Weiss}. A version for noncommutative Euclidean spaces can be found in \cite[Theorem $7.5.4$]{Sukochev_NCInt}. 
\begin{theorem}[Hunt-Marcinkiewicz Interpolation for Operators]\label{Thm:Hunt_Marc_Interpolation}
Suppose $\mathfrak{T}$ is a linear operator from $\cS^{p_0,r_0}+\cS^{p_1,r_1}$ into $L^{q_0,s_0}(X)+L^{q_1,s_1}(X)$ for $r_0<r_1$ and $p_0\neq p_1$, and there is the estimate
\[
\|\mathfrak{T}(T)\|_{L^{q_i,s_i}(X)}\leq \|T\|_{\cS^{p_i,r_i}}\qquad \text{for } i=0,1.
\]
Then there exists a constant $C=C(\theta)>0$ such that
\[
\|\mathfrak{T}(T)\|_{L^{p,q}(X)}\leq C\|T\|_{\cS^{r,q}},
\]
where $1\leq q\leq \infty$ and
\[
\frac{1}{p}=\frac{1-\theta}{p_0}+\frac{\theta}{p_1},\qquad \frac{1}{r}=\frac{1-\theta}{r_0}+\frac{1}{r_1}, \qquad 0<\theta<1.
\]
\end{theorem}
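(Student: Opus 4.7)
The plan is to reduce the statement to the classical scalar-valued Hunt--Marcinkiewicz interpolation theorem for Lorentz sequence spaces, following the sketch in the paragraph preceding the theorem and the argument of Simon \cite{SimonInterpolation}.

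First, I would fix two orthonormal families $\{\varphi_n\},\{\psi_n\}\subseteq L^2(\R^d)$ and work inside the subspace $\cS^{p,q}(\{\varphi_n\},\{\psi_n\})$ consisting of operators of the form $T=\sum_{n=1}^\infty a_n\,\varphi_n\otimes\psi_n$ with $(a_n)\in\ell^{p,q}$. Because $\{\varphi_n\}$ and $\{\psi_n\}$ are orthonormal, the singular values of such $T$ are a decreasing rearrangement of $(|a_n|)$. Consequently the map
\[
\Phi:\ell^{p,q}\longrightarrow \cS^{p,q}(\{\varphi_n\},\{\psi_n\}),\qquad \Phi(a)=\sum_{n=1}^\infty a_n\,\varphi_n\otimes\psi_n,
\]
is an isometric linear isomorphism for every $1\leq p,q\leq\infty$ simultaneously. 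This is the key reduction, as it converts the noncommutative domain into a scalar Lorentz sequence space.

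Second, I would consider the composition $\mathfrak{T}\circ\Phi$, which is a linear operator from $\ell^{p_0,r_0}+\ell^{p_1,r_1}$ into $L^{q_0,s_0}(X)+L^{q_1,s_1}(X)$. The endpoint hypotheses of the theorem translate directly, via the isometry $\Phi$, to the bounds
\[
\|(\mathfrak{T}\circ\Phi)(a)\|_{L^{q_i,s_i}(X)}\leq \|a\|_{\ell^{p_i,r_i}},\qquad i=0,1.
\]
At this point I would invoke the classical Hunt--Marcinkiewicz interpolation theorem for Lorentz spaces \cite{Hunt,Weiss} to obtain the intermediate estimate $\|(\mathfrak{T}\circ\Phi)(a)\|_{L^{p,q}(X)}\leq C(\theta)\|a\|_{\ell^{r,q}}$, with a constant $C(\theta)$ that depends only on $\theta$ and the fixed endpoint exponents --- crucially, not on the choice of the two orthonormal families. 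Undoing $\Phi$ gives the desired estimate for every $T$ lying in $\cS^{r,q}(\{\varphi_n\},\{\psi_n\})$.

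Finally, to pass from a fixed-family subspace to the whole Schatten--Lorentz class, I would apply the argument to a generic $T\in\cS^{r,q}$ using its own singular value decomposition $T=\sum_n s_n(T)\,e_n\otimes\eta_n$: this exhibits $T$ as an element of $\cS^{r,q}(\{e_n\},\{\eta_n\})$, and the uniformity of $C(\theta)$ in the choice of orthonormal families then yields $\|\mathfrak{T}(T)\|_{L^{p,q}(X)}\leq C(\theta)\|T\|_{\cS^{r,q}}$ for every $T$. The step I expect to be the main obstacle is verifying that $\Phi$ really is isometric for all Lorentz indices at once, particularly at the weak endpoint $q=\infty$ where one must carefully compare the non-increasing rearrangement of $(|a_n|)$ with the singular value sequence of the assembled operator; once that identification is secured, the remainder is a direct transfer of the scalar interpolation theorem.
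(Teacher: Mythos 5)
Your proof is correct and follows essentially the same reduction the paper sketches and attributes to Simon: fix orthonormal families, use the isometric identification of $\cS^{p,q}(\{\varphi_n\},\{\psi_n\})$ with $\ell^{p,q}$ to transfer the scalar Hunt--Marcinkiewicz theorem, and then pass to a general $T$ via its own singular value decomposition, exploiting that the interpolation constant depends only on the exponents and not on the chosen families. The paper does not supply a detailed proof (it only points to \cite{SimonInterpolation} and \cite[Theorem 2.10]{SimonTrace}), and your write-up is a faithful expansion of that sketch.
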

\begin{remark}
The construction of the closed subspaces $S^{p,q}(\{\varphi_n\},\{\psi_n\})$ allows for a version of the Hunt-Marcinkiewicz interpolation theorem to hold for linear maps between Lorentz Schatten classes by choosing the Lorentz sequence spaces $L^{q,r}(X)=\ell^{q,r}$ in Theorem \ref{Thm:Hunt_Marc_Interpolation}.
\end{remark}

\subsection{Fourier analysis of operators}
Recall that $\rho:\R^{2d}\to \mathcal{L}(L^2(\R^d))$ is the symmetric time-frequency shift defined in \eqref{eq:SchRep}. Given $T\in\cS^1$, we define the Fourier-Wigner transform at a point $z\in\R^{2d}$ by
\[
\F_W(T)(z)=\tr(T\rho(-z))=\langle T,\rho(z)\rangle_{\cS^1,\cS^\infty}.
\]
For $F\in L^1(\R^{2d})$, the inverse Fourier-Wigner transform, also known as the integrated Schr\"{o}dinger operator, is given by
\[
T=\F_W^{-1}(F)=\int_{\R^{2d}}F(z)\rho(z)\,dz.
\]
This is a continuous linear map from $L^1(\R^{2d})$ into $\mathcal{K}$ by  \cite[Theorem $1.30$]{Folland_Phase}. 

For rank-one operators $T=g\otimes h$ with $g,h\in L^2(\R^d)$, the Fourier-Wigner transform is given by 
\begin{equation}\label{eq:FW_Ambi}
\F_W(g\otimes h)(z)=\mathcal{A}(g,h)(z),
\end{equation}
where $\mathcal{A}(g,h)$ is the cross-ambiguity function defined in \eqref{AmbiguityDef}.

If $T\in \cS^1$, then $\F_W(T)$ belongs to the space of continuous functions vanishing at infinity, denoted $C_0(\R^{2d})$. This is a quantum analogue of the Riemann-Lebesgue lemma and was first shown in \cite{Werner}. Due to a result by Pool \cite{Pool}, the Fourier-Wigner transform extends to a unitary operator from $\cS^2$ to $L^2(\R^{2d})$. Therefore, by Theorem \ref{Thm:Hunt_Marc_Interpolation} we regain Hausdorff-Young's inequality for operators.
\begin{prop}[Quantum Hausdorff-Young]\label{prop:QHY}
Let $1\leq p\leq 2$, and $p'=p/(p-1)$ be the H\"{o}lder conjugate. Then for any $T\in \cS^{p}$,
\[
\|\F_W(T)\|_{L^{p',p}(\R^{2d})}\leq C \|T\|_{\cS^{p}}.
\]
If $T\in \cS^{p,p'}$, then
\[
\|\F_W(T)\|_{L^{p'}(\R^{2d})}\leq C\|T\|_{\cS^{p,p'}}.
\]
\end{prop}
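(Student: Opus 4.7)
The plan is to deduce both inequalities from the Hunt-Marcinkiewicz interpolation theorem for operators (Theorem \ref{Thm:Hunt_Marc_Interpolation}) applied to the two endpoint bounds for $\F_W$ that are already available.

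First I would record the two endpoints. At $p=1$, for any $T\in\cS^1$ and every $z\in\R^{2d}$,
\[
|\F_W(T)(z)|=|\tr(T\rho(-z))|\leq \|T\|_{\cS^1}\|\rho(-z)\|_{\cS^\infty}=\|T\|_{\cS^1},
\]
since $\rho(-z)$ is unitary. This gives $\|\F_W(T)\|_{L^{\infty,\infty}}\leq \|T\|_{\cS^{1,1}}$. At $p=2$, Pool's theorem, quoted in the text just above the proposition, says that $\F_W:\cS^2\to L^2(\R^{2d})$ is a unitary isomorphism, so $\|\F_W(T)\|_{L^{2,2}}=\|T\|_{\cS^{2,2}}$.

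Next I would apply Theorem \ref{Thm:Hunt_Marc_Interpolation} with endpoints $(p_0,r_0,q_0,s_0)=(1,1,\infty,\infty)$ and $(p_1,r_1,q_1,s_1)=(2,2,2,2)$. For $0<\theta<1$ the interpolated exponents satisfy $\tfrac{1}{r}=(1-\theta)+\tfrac{\theta}{2}$ and $\tfrac{1}{p}=\tfrac{\theta}{2}$, so $r\in(1,2)$ and $p=r'$. Renaming $r$ as $p$, the theorem yields
\[
\|\F_W(T)\|_{L^{p',q}(\R^{2d})}\leq C\|T\|_{\cS^{p,q}}
\]
for every $1\leq p<2$ and every $1\leq q\leq\infty$, with the case $p=2$ being the Plancherel endpoint itself and $p=1$ the trivial endpoint.

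Finally I would read off the two stated inequalities by specialising $q$. Choosing $q=p$ gives the first assertion $\|\F_W(T)\|_{L^{p',p}}\leq C\|T\|_{\cS^{p,p}}=C\|T\|_{\cS^p}$, while choosing $q=p'$ gives the second, using $L^{p',p'}=L^{p'}$: $\|\F_W(T)\|_{L^{p'}}\leq C\|T\|_{\cS^{p,p'}}$. There is no real obstacle here; the only point to be a little careful about is tracking which exponent in the interpolation theorem plays the role of the Schatten index and which the $L^{p,q}$ index, and verifying the hypothesis $r_0<r_1$ and $p_0\neq p_1$, both of which hold for the chosen endpoints.
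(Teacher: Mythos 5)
Your proof is correct and is exactly the route the paper intends: the paper does not spell out a proof for Proposition \ref{prop:QHY} but states it as a direct consequence of Theorem \ref{Thm:Hunt_Marc_Interpolation}, interpolating between the trivial $\cS^1\to L^\infty$ bound $|\tr(T\rho(-z))|\le\|T\|_{\cS^1}$ and Pool's $\cS^2\to L^2$ unitarity, with the free second Lorentz parameter then specialised to $q=p$ and $q=p'$ respectively to read off the two stated inequalities.
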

\begin{prop}[Reverse Quantum Hausdorff-Young]
Let $1\leq p\leq 2$, and $p'=p/(p-1)$ be the H\"{o}lder conjugate. Then for any $F\in L^{p}(\R^{2d})$,
\[
\|\F_W^{-1}(F)\|_{\cS^{p',p}}\leq C\|F\|_{L^{p}(\R^{2d})}.
\]
If $F\in L^{p,p'}(\R^{2d})$, then
\[
\|\F_W^{-1}(F)\|_{\cS^{p'}}\leq C\|F\|_{L^{p,p'}(\R^{2d})}.
\]
\end{prop}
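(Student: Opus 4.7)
The plan is to interpolate between two endpoint estimates for $\F_W^{-1}$, parallel to the proof of Proposition \ref{prop:QHY} for the forward direction.

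First, I would establish the endpoints. For $F \in L^1(\R^{2d})$, the weak integral definition of $\F_W^{-1}(F)$ together with the unitarity of $\rho(z)$ yields
\[
\|\F_W^{-1}(F)\|_{\cS^\infty} \leq \int_{\R^{2d}} |F(z)| \, dz = \|F\|_{L^1}.
\]
For $F \in L^2(\R^{2d})$, Pool's theorem (recorded in the preliminaries) gives $\|\F_W^{-1}(F)\|_{\cS^2} = \|F\|_{L^2}$ as an isometric isomorphism.

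Second, I would apply a Hunt-Marcinkiewicz interpolation in the direction from Lorentz function spaces to Lorentz Schatten classes. Writing the endpoints as maps $L^{1,1} \to \cS^{\infty, \infty}$ and $L^{2,2} \to \cS^{2, 2}$ and interpolating with $\theta \in (0, 1]$ determined by $\frac{1}{p} = 1 - \theta/2$, so that $\frac{1}{p'} = \theta/2$, produces
\[
\|\F_W^{-1}(F)\|_{\cS^{p', r}} \leq C \|F\|_{L^{p, r}}
\]
for any $1 \leq r \leq \infty$. Choosing $r = p$ yields the first inequality (using $L^{p, p} = L^p$), and choosing $r = p'$ yields the second (using $\cS^{p', p'} = \cS^{p'}$).

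The main obstacle is that Theorem \ref{Thm:Hunt_Marc_Interpolation} is stated for operators in the opposite direction, from Schatten classes into Lorentz function spaces. A mirror version (Lorentz function space to Lorentz Schatten class) is required here; it follows from the same strategy, since Simon's reduction via the singular value decomposition to classical Hunt-Marcinkiewicz on Lorentz sequence spaces is symmetric in the role of the two Lorentz-type spaces. An alternative route that avoids this issue altogether is a duality argument: since $\F_W^* = \F_W^{-1}$ with respect to the canonical $\cS^2$--$L^2$ pairings, the dualities $(L^{p, q})^* = L^{p', q'}$ and $(\cS^{p, q})^* = \cS^{p', q'}$ applied to the two parts of Proposition \ref{prop:QHY} deliver both inequalities immediately, with the endpoint $p = 1$ handled separately by the direct calculation above.
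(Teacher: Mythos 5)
Your proof is correct, and since the paper offers no explicit argument for the reverse inequality (it is stated immediately after Proposition \ref{prop:QHY}), the duality route you suggest is almost certainly the intended one and is the cleanest way to close the gap.

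One imprecision worth flagging: you assert that the ``mirror'' Hunt--Marcinkiewicz theorem (from a Lorentz function space into a Lorentz--Schatten class) ``follows from the same strategy, since Simon's reduction via the singular value decomposition is symmetric.'' It is not symmetric in the way stated. Simon's trick works for maps $\mathfrak{T}:\cS^{p,q}\to L^{r,s}$ because one can fix the singular value decomposition of the \emph{input} $T$ and restrict $\mathfrak{T}$ to the closed subspace $\cS^{p,q}(\{\varphi_n\},\{\psi_n\})\cong\ell^{p,q}$, reducing to classical Hunt--Marcinkiewicz. For a map $\Phi:L^{p,q}\to\cS^{r,s}$ the singular vectors of the \emph{output} $\Phi(F)$ vary with $F$, so there is no fixed subspace to restrict to; the subspace trick does not directly invert. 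The honest justifications for the mirror statement are (i) the computation of the $K$-functional $K(t,A;\cS^1,\cS^\infty)=\sum_{n\le t}s_n(A)$, which identifies $(\cS^1,\cS^\infty)_{\theta,q}$ with $\cS^{p,q}$ and lets one apply abstract real interpolation, or (ii) duality against the forward direction — which is exactly your second argument. Since you supply (ii) explicitly, together with the $L^1\to\cS^\infty$ and $L^2\to\cS^2$ endpoints and a separate treatment of $p=1$ where the Lorentz dualities degenerate, the proposal stands; I would simply replace the ``same strategy is symmetric'' sentence with a citation of either the $K$-functional identification or the duality step you already give.
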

\begin{remark}
The version of Hausdorff-Young presented in Proposition \ref{prop:QHY} is sharper than the version presented in \cite{Werner,Luef_Skrettingland_19} by the inclusion of Lorentz spaces.  Namely, for $1\leq p\leq 2$, it follows that
\[
\|\F_W(T)\|_{L^{p'}}=\|\F_W(T)\|_{L^{p',p'}}\leq C\|\F_W(T)\|_{L^{p',p}}\leq C\|T\|_{\cS^p},
\]
which is, up to a constant, the Hausdorff-Young inequality often presented in quantum harmonic analysis.
\end{remark}

\subsection{Operator convolutions and localisation operators}
In what follows, $Pf(x)=f(-x)$ denotes the parity operator, and $\alpha_z(T)=\rho(z)T\rho(-z)$ denotes conjugation with the symmetric time-frequency shift. For $S,T\in \cS^1$ we define an operator-operator convolution by
\[
T\star S(z):=\tr(T\alpha_z(PSP)),
\]
which is a function on phase space. Given $T\in\cS^1$ and $F\in L^1(\R^{2d})$,
we also introduce function-operator convolution by
\[
F\star T=T\star F:=\int_{\R^{2d}}F(z)\alpha_z(T)\,dz,
\]
where the integral is considered weakly. These convolutions were introduced in \cite{Werner}, where a version of Young's inequality for convolutions was also proved. We state the inequality as given in \cite{Luef_Skrettingland_19}.
\begin{theorem}[Werner-Young's inequality]\label{Werner-Young}
Let $1\leq p,q,r\leq \infty$ be such that $1+r^{-1}=p^{-1}+q^{-1}$. If $F\in L^p(\R^{2d})$, $T\in \cS^p$ and $S\in \cS^q$, then $F\star S\in \cS^r$ and $T\star S\in L^{r}(\R^{2d})$. Moreover, there are the norm bounds
\begin{align*}
\|F\star S\|_{\cS^r}\leq& \|F\|_{L^p}\|S\|_{\cS^q},\\
\|T\star S\|_{L^r}\leq& \|T\|_{\cS^p}\|S\|_{\cS^q}.
\end{align*}
\end{theorem}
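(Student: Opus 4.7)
My plan is to reduce the full Werner--Young scale to two endpoint estimates for the operator-operator convolution $T\star S$, interpolate to sweep out the scale, and then transfer the result to $F\star S$ by a duality identity. Unfolding the weak-integral definition of $F\star S$ and rewriting $\tr(A\alpha_z(S))=(A\star PSP)(z)$ yields, for any $A\in\cS^1$,
\[
\tr\bigl(A(F\star S)\bigr)=\int_{\R^{2d}}F(z)\,(A\star PSP)(z)\,dz,
\]
so once I know $\|A\star PSP\|_{L^{p'}}\leq\|A\|_{\cS^{r'}}\|S\|_{\cS^q}$, Hölder's inequality in $L^p$ together with the characterization of the $\cS^r$ norm by duality against $\cS^{r'}$ will give the claimed $F\star S$ bound.

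The easy endpoint for $T\star S$ is the Hölder case $q=p'$, $r=\infty$. From the pointwise identity $T\star S(z)=\tr(T\alpha_z(PSP))$, Hölder's inequality for Schatten classes combined with the fact that $\alpha_z$ is an isometry on every Schatten class gives
\[
|T\star S(z)|\leq\|T\|_{\cS^p}\|\alpha_z(PSP)\|_{\cS^{p'}}=\|T\|_{\cS^p}\|S\|_{\cS^{p'}},
\]
hence $\|T\star S\|_{L^\infty}\leq \|T\|_{\cS^p}\|S\|_{\cS^{p'}}$.

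The main obstacle is the other endpoint $p=q=r=1$. I would first treat rank-one factors $T=f\otimes h$, $S=g\otimes k$. Using $PSP=(Pg)\otimes(Pk)$ and $\alpha_z\bigl((Pg)\otimes(Pk)\bigr)=(\rho(z)Pg)\otimes(\rho(z)Pk)$, together with the formula for the trace of a product of two rank-one operators, gives the closed form
\[
T\star S(z)=\overline{\mathcal{A}(h,Pg)(z)}\,\mathcal{A}(f,Pk)(z).
\]
Cauchy--Schwarz in $L^2(\R^{2d})$ followed by Moyal's identity then bounds $\|T\star S\|_{L^1}$ by $\|h\|_2\|g\|_2\|f\|_2\|k\|_2$, which is exactly $\|T\|_{\cS^1}\|S\|_{\cS^1}$ on rank-one factors. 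General $T,S\in\cS^1$ are handled by expanding in singular value decompositions, using bilinearity of $\star$, and summing absolutely against the $\ell^1$ tails of singular values.

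With the endpoints $\cS^1\times\cS^1\to L^1$ and $\cS^p\times\cS^{p'}\to L^\infty$ in hand, bilinear Riesz--Thorin interpolation (carried out one Schatten argument at a time, using Simon's device of restricting to the subspaces $\cS^{p,q}(\{\varphi_n\},\{\psi_n\})$ described before Theorem \ref{Thm:Hunt_Marc_Interpolation}) covers the full Young range $1+r^{-1}=p^{-1}+q^{-1}$ for $T\star S$. Feeding the resulting estimate back into the duality identity of the first paragraph closes the argument for $F\star S$. The only genuinely nontrivial step is the rank-one identification at the $L^1$ endpoint; the rest is Hölder, Moyal, and standard interpolation bookkeeping.
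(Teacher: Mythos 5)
The paper does not prove this theorem; it states it with a citation to Werner and to Luef--Skrettingland, so there is no in-text argument to compare against. Your proof is correct and, as far as I can tell, reproduces the standard route used in those references, just reorganized cleanly. The two endpoints you isolate for the operator--operator convolution are exactly the right ones: $\cS^p\times\cS^{p'}\to L^\infty$ falls out of $|\tr(AB)|\leq\|A\|_{\cS^p}\|B\|_{\cS^{p'}}$ together with the fact that $\alpha_z$ and conjugation by $P$ are Schatten isometries, while $\cS^1\times\cS^1\to L^1$ is handled by the closed form $T\star S(z)=\overline{\mathcal{A}(h,Pg)(z)}\,\mathcal{A}(f,Pk)(z)$ on rank-one factors, Cauchy--Schwarz, Moyal's identity, and the singular value decomposition for general $T,S$; I checked the rank-one trace computation and the $\ell^1$-summation and both are right. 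Your one-variable-at-a-time interpolation (first fix $T\in\cS^1$ to get $\cS^1\times\cS^q\to L^q$, then fix $S\in\cS^q$ to sweep the whole Young range) does close, provided you invoke the complex Riesz--Thorin theorem on the fixed-orthonormal-family subspaces $\cS^p(\{\varphi_n\},\{\psi_n\})\cong\ell^p$ rather than the Hunt--Marcinkiewicz theorem cited nearby, since the latter is real interpolation and would degrade the constant $1$ stated in the theorem; you do say Riesz--Thorin, so this is only a matter of being careful not to accidentally reach for Theorem~\ref{Thm:Hunt_Marc_Interpolation}. The duality transfer to $F\star S$ via $\tr\bigl(A(F\star S)\bigr)=\int F(z)(A\star PSP)(z)\,dz$ is a nice way to get the second half of the theorem for free: the exponent bookkeeping $1+1/p'=1/r'+1/q$ is equivalent to the stated Young relation, and $\|PSP\|_{\cS^q}=\|S\|_{\cS^q}$ absorbs the parity. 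The only point I would flag as slightly underexplained is that for $r<\infty$ you should say a word about why the duality functional is realized by an honest element of $\cS^r$ (a density argument in $F$ and $S$ suffices, and the $r=1$ case with $p=q=1$ is handled directly by absolute convergence of the Bochner integral), but this is a standard technicality and does not affect the substance.
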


It is worth noting that these convolutions interact with the Fourier-Wigner transform in the same way as the classical convolution interacts with the symplectic Fourier transform.
\begin{theorem}[Proposition $3.12$ in \cite{Luef_Skrettingland_19}]
Let $F\in L^1$, $S,T\in \cS^1$. Then the following holds,
\begin{align*}
\F_W(F\star S)=\F_\sigma(F)\F_W(S),\\
\F_\sigma(T\star S)=\F_W(T)\F_W(S).
\end{align*}
\end{theorem}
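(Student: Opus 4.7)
The plan is to verify both identities by direct computation from the definitions, exploiting the projective composition law $\rho(z)\rho(w) = e^{\pi i \sigma(z,w)}\rho(z+w)$, which follows at once from \eqref{eq:SchRep}, together with the involution $\rho(z)^{*} = \rho(-z)$ and the parity identity $P\rho(z)P = \rho(-z)$.

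For the first identity, I would move the trace inside the weak integral defining $F\star S$ to get
\[
\F_W(F\star S)(\zeta) = \int_{\R^{2d}} F(z)\,\tr\bigl(\rho(z)S\rho(-z)\rho(-\zeta)\bigr)\,dz.
\]
By the cyclicity of the trace the integrand equals $\tr(S\rho(-z)\rho(-\zeta)\rho(z))$, and two applications of the composition law give $\rho(-z)\rho(-\zeta)\rho(z) = e^{2\pi i\sigma(z,\zeta)}\rho(-\zeta)$. Hence the integrand is $F(z)\,e^{2\pi i\sigma(z,\zeta)}\F_W(S)(\zeta)$. Pulling $\F_W(S)(\zeta)$ outside the $z$-integral and using $\sigma(z,\zeta) = -\sigma(\zeta,z)$ identifies the remaining integral as $\F_\sigma(F)(\zeta)$, finishing the first identity.

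For the second identity, I would expand $S$ via the integrated Schr\"odinger representation, $S = \int \F_W(S)(v)\rho(v)\,dv$, so that $PSP = \int \F_W(S)(v)\rho(-v)\,dv$ using $P\rho(v)P = \rho(-v)$. Conjugating by $\rho(z)$ and applying the composition law gives $\rho(z)\rho(-v)\rho(-z) = e^{-2\pi i\sigma(z,v)}\rho(-v)$, whence
\[
T\star S(z) = \tr\bigl(T\alpha_z(PSP)\bigr) = \int_{\R^{2d}} \F_W(T)(v)\F_W(S)(v)\,e^{-2\pi i\sigma(z,v)}\,dv = \F_\sigma(\F_W(T)\F_W(S))(z).
\]
Applying $\F_\sigma$ to both sides and invoking $\F_\sigma^{2} = \mathrm{Id}$ then yields the second identity.

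The main obstacle is the justification of the interchange of trace and weak integral. For the first identity this is painless: when $F\in L^1(\R^{2d})$ and $S\in\cS^1$, Werner-Young (Theorem~\ref{Werner-Young}) gives $F\star S\in\cS^1$ and the integrand $z\mapsto F(z)\alpha_z(S)$ is Bochner integrable in $\cS^1$, so the trace functional passes through the integral. For the second identity, the spreading integral $\int \F_W(S)(v)\rho(v)\,dv$ is only guaranteed to converge absolutely when $\F_W(S)\in L^1$, so I would first carry out the computation on the dense subclass $\{S\in\cS^1:\F_W(S)\in L^1\}$, which contains rank-one operators built from Schwartz windows via \eqref{eq:FW_Ambi} and \eqref{Ambi-Gauss}, and then extend to all $T,S\in\cS^1$ by continuity: both sides of the identity define bounded bilinear maps $\cS^1\times\cS^1\to C_0(\R^{2d})$ thanks to Werner-Young and the quantum Riemann-Lebesgue lemma.
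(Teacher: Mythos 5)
The paper does not prove this result; it is quoted verbatim as Proposition~3.12 of \cite{Luef_Skrettingland_19}. Your direct computation supplies a valid proof and the key manipulations check out: the composition law $\rho(z)\rho(w)=e^{\pi i\sigma(z,w)}\rho(z+w)$ follows from \eqref{eq:SchRep}, the three-factor simplifications $\rho(-z)\rho(-\zeta)\rho(z)=e^{2\pi i\sigma(z,\zeta)}\rho(-\zeta)$ and $\rho(z)\rho(-v)\rho(-z)=e^{-2\pi i\sigma(z,v)}\rho(-v)$ are both correct, and in each case the residual $z$- or $v$-integral is exactly the symplectic Fourier transform after the sign flip $\sigma(z,\zeta)=-\sigma(\zeta,z)$. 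Your handling of the convergence issues is also sound: for the first identity, $F\in L^1$ and $\|\alpha_z(S)\|_{\cS^1}=\|S\|_{\cS^1}$ make $z\mapsto F(z)\alpha_z(S)$ Bochner integrable in $\cS^1$, so the trace passes through; for the second, on the dense class $\{S:\F_W(S)\in L^1\}$ the spreading integral converges absolutely in operator norm and $A\mapsto\tr(TA)$ is operator-norm continuous for fixed $T\in\cS^1$, after which the bilinear continuity $\cS^1\times\cS^1\to C_0$ on both sides (Werner--Young plus the quantum Riemann--Lebesgue lemma for the left, $\|\F_W(\cdot)\|_{L^\infty}\le\|\cdot\|_{\cS^1}$ for the right) closes the density argument, and $\F_\sigma^2=\mathrm{Id}$ on $\sS'$ justifies the final inversion. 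This is essentially the standard proof one would give for the cited result; no gaps.
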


When the operator $T=\varphi\otimes \psi$ is a rank-one operator with $\varphi,\psi\in L^2(\R^d)$, then the convolution with $F\in L^p(\R^{2d})$, for any $1\leq p\leq \infty$, gives 
\begin{align*}
\langle F\star (\varphi\otimes \psi) g,h\rangle=&\,\int_{\R^{2d}} F(z)\langle (\varphi\otimes \psi)\rho(-z)g,\rho(-z)h\rangle \,dz\\
=&\,\int_{\R^{2d}} F(z)\langle g,\rho(z)\psi\rangle\langle \rho(z)\varphi,h\rangle \,dz\\
=&\,\int_{\R^{2d}} F(z)\mathcal{A}(g,\psi)(z)\overline{\mathcal{A}(h,\varphi)}(z) \,dz\\
=&\langle \mathcal{A}_{F}^{\psi,\varphi}g,h\rangle,
\end{align*}
for any choice of $g,h\in L^2(\R^d)$. The operator $F\star (\varphi\otimes \psi)=\mathcal{A}_F^{\psi,\varphi}$ is known as a mixed-state localisation operator with symbol $F$ and windows $\psi$ and $\varphi$. By Werner-Young's inequality, it follows that the localisation operator is in $\cS^p$ whenever $F\in L^p(\R^{2d})$. A necessary condition was established in \cite{Cordero2}.
\begin{theorem}[Theorem $1$ in \cite{Cordero2}]\label{thm:NessSchattenCond}
Let $F\in\sS'(\R^{2d})$ and $1\leq p\leq \infty$. Assume that there is a constant $C>0$ such that
\[
\|F\star(\varphi\otimes\psi)\|_{\cS^p}\leq C\|\varphi\|_{M^{1,1}}\|\psi\|_{M^{1,1}}.
\]
for every $\psi,\varphi\in \sS(\R^d)$. Then $F\in M^{p,\infty}(\R^{2d})$.
\end{theorem}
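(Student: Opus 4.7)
The plan is to test the hypothesis on the two-parameter family of rank-one operators built from time-frequency shifts of the Gaussian, and then convert the resulting uniform Schatten bound into a modulation-space estimate via the Fourier-Wigner calculus and quantum Hausdorff-Young.

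Let $\varphi_0\in\sS(\R^d)$ be the $L^2$-normalised Gaussian, and for arbitrary $w_1,w_2\in\R^{2d}$ set $\varphi=\rho(w_1)\varphi_0$ and $\psi=\rho(w_2)\varphi_0$. By the covariance Lemma~\ref{Covariance property} together with translation-invariance of the $L^1_xL^1_\xi$-norm, the modulation-space norm $\|\rho(w)\varphi_0\|_{M^{1,1}}$ is independent of $w$, so the hypothesis yields the uniform bound
\[
\sup_{w_1,w_2\in\R^{2d}}\|F\star(\rho(w_1)\varphi_0\otimes\rho(w_2)\varphi_0)\|_{\cS^p}\leq C\|\varphi_0\|_{M^{1,1}}^2.
\]
Factorising the rank-one operator as $\rho(w_1)(\varphi_0\otimes\varphi_0)\rho(-w_2)$, using the cocycle $\rho(a)\rho(b)=e^{-\pi i\sigma(a,b)}\rho(a+b)$ and the identity $\tr((\varphi_0\otimes\varphi_0)\rho(y))=e^{-\pi|y|^2/2}$, and then applying $\F_W(F\star S)=\F_\sigma(F)\F_W(S)$, a direct computation gives
\[
\F_W\bigl(F\star(\rho(w_1)\varphi_0\otimes\rho(w_2)\varphi_0)\bigr)(z)=e^{i\Phi(w_1,w_2,z)}\,\F_\sigma F(z)\,e^{-\pi|z-(w_1-w_2)|^2/2},
\]
with the explicit real linear phase $\Phi(w_1,w_2,z)=\pi\sigma(w_2,w_1)+\pi\sigma(z,w_1+w_2)$.

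For $1\leq p\leq 2$, quantum Hausdorff-Young (Proposition~\ref{prop:QHY}) converts the Schatten bound into
\[
\bigl\|\F_\sigma F(\cdot)\,e^{i\Phi(w_1,w_2,\cdot)}\,e^{-\pi|\cdot-(w_1-w_2)|^2/2}\bigr\|_{L^{p',p}(\R^{2d})}\leq C',
\]
uniformly in $w_1,w_2\in\R^{2d}$. Parametrising independently by $u=w_1-w_2$ and $v=w_1+w_2$, the product $e^{\pi i\sigma(\cdot,v)}e^{-\pi|\cdot-u|^2/2}$ is exactly a time-frequency-shifted Gaussian on $\R^{2d}$, so the bound above controls the short-time Fourier transform of $\F_\sigma F$ with Gaussian window, uniformly in both the spatial and frequency parameters. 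This places $\F_\sigma F$ in $M^{p',\infty}(\R^{2d})$; symplectic-Fourier invariance of the Gaussian together with the exchange of position and frequency effected by $\F_\sigma$ then transfers the bound to $F\in M^{p,\infty}(\R^{2d})$.

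The range $p>2$ is handled by duality: expanding $T\in\cS^{p'}$ in its singular value decomposition and using the pairing
\[
\tr\bigl((F\star(\varphi\otimes\psi))(h\otimes g)\bigr)=\int_{\R^{2d}} F(z)\,\mathcal{A}(h,\psi)(z)\,\overline{\mathcal{A}(g,\varphi)(z)}\,dz
\]
reduces the assumption to a weak-type bound on $F$ paired with products of two ambiguity functions, which is again probed with time-frequency shifts of $\varphi_0$ to extract $M^{p,\infty}$-membership. The main obstacle throughout is the symplectic book-keeping required to identify the uniform window bound on $\F_\sigma F$ coming from Hausdorff-Young with the STFT defining the $M^{p,\infty}$ norm of $F$, and in particular to confirm that the phase $\Phi$ supplies precisely the frequency-shift component missing from the translation $u$ alone, so that the orbit of the Gaussian under $(u,v)$ saturates the supremum in the modulation-space norm.
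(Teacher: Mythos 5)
The paper does not prove this theorem; it imports it verbatim from \cite{Cordero2}, so there is no internal proof to compare against. Your blind attempt, however, has a genuine gap in the main ($1\leq p\leq 2$) branch.

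The opening moves are fine: testing with $\varphi = \rho(w_1)\varphi_0$, $\psi = \rho(w_2)\varphi_0$, the covariance lemma and the cocycle give
\[
\F_W\bigl(F\star(\rho(w_1)\varphi_0\otimes\rho(w_2)\varphi_0)\bigr)(z) = e^{i\Phi(w_1,w_2,z)}\,\F_\sigma F(z)\,e^{-\pi|z-(w_1-w_2)|^2/2},
\]
with a real phase $\Phi$ linear in $z$, and quantum Hausdorff--Young bounds the $L^{p',p}$-norm of the right-hand side by a constant uniformly in $w_1,w_2$. The gap is in what you deduce from this. The Lorentz norm $L^{p',p}$ is rearrangement invariant, so it sees only $|\cdot|$; the unimodular factor $e^{i\Phi}$, which is the \emph{only} place the parameter $v=w_1+w_2$ survives, is erased the instant the norm is taken. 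What you actually prove is
\[
\sup_{u\in\R^{2d}}\bigl\|\F_\sigma F(\cdot)\,e^{-\pi|\cdot-u|^2/2}\bigr\|_{L^{p',p}(\R^{2d})}<\infty,
\]
a Wiener amalgam estimate $\F_\sigma F\in W(L^{p',p},L^\infty)$ with no trace of the modulation parameter at all. This does \emph{not} ``control the STFT of $\F_\sigma F$ uniformly in both the spatial and frequency parameters'': the STFT is the \emph{integral} of $\F_\sigma F$ against a translated and modulated Gaussian, not a norm of the pointwise product. Concretely, $F\in M^{p,\infty}(\R^{2d})$ is equivalent to $\F_\sigma F\in W(\F L^p,L^\infty)$, i.e.\ $\sup_u\|\F_\sigma(\F_\sigma F\cdot T_u\Phi_0)\|_{L^p}<\infty$, and for $p<2$ Hausdorff--Young runs $L^p\to L^{p'}$, not conversely; $\F L^p$ sits strictly inside $L^{p'}$, so the amalgam bound you obtain is strictly weaker than what is needed. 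For $p=1$ the failure is explicit: your estimate only says $\F_\sigma F\in L^\infty(\R^{2d})$, and the symplectic Fourier transform of a generic bounded function does not lie in $M^{1,\infty}(\R^{2d})$.

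Your sketch for $p>2$ actually contains the right mechanism for \emph{all} $p$: pair $F\star(\varphi\otimes\psi)$ against rank-one operators $g\otimes h$ via
\[
\langle(F\star(\varphi\otimes\psi))g,h\rangle = \int_{\R^{2d}}F(z)\,\mathcal{A}(g,\psi)(z)\,\overline{\mathcal{A}(h,\varphi)(z)}\,dz,
\]
with all four windows time-frequency shifts of $\varphi_0$. Choosing $\varphi,\psi$ to freeze the modulation parameter and letting $g,h$ range over shifts along a sufficiently coarse lattice $\Lambda$, so that $\{\rho(\lambda)\varphi_0\}_{\lambda\in\Lambda}$ is a Bessel (near-orthonormal) sequence, the pairings become Gaussian-windowed STFT samples of $F$ along $\Lambda$ at that frequency. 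The $\cS^p$ hypothesis together with $\cS^p$--$\cS^{p'}$ duality against these near-orthonormal rank-one operators then yields an $\ell^p(\Lambda)$ bound on the samples, uniformly in the modulation, and sampling equivalence of modulation-space norms finishes. It is this duality-plus-Bessel-sequence argument, not Hausdorff--Young, that carries the proof, and your sketch omits exactly the near-orthogonality step that makes the $\ell^p$ extraction legitimate.
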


\subsection{Weyl quantisation and Schwartz operators}
Any continuous linear operator $T:\mathscr{S}(\R^d)\to \sS'(\R^d)$ can be associated to a tempered distribution $a_T\in\mathscr{S}'(\R^{2d})$, called the Weyl symbol of $T$, which is defined through the relation
\begin{equation}\label{WeylQuant}
\langle T \psi,\varphi\rangle_{\sS',\sS}=\langle a_T, \mathcal{W}(\varphi,\psi)\rangle_{\sS',\sS},
\end{equation}
for all $\psi,\varphi\in\mathscr{S}(\R^d)$. Here $\mathcal{W}$ denotes the cross-Wigner distribution defined by
\[
\mathcal{W}(\varphi,\psi)(x,\xi):=\int_{\R^d}\varphi\left(x+\frac{t}{2}\right)\overline{\psi\left(x-\frac{t}{2}\right)}e^{-2\pi i t\cdot\xi}\,dt=\F_\sigma\left(\mathcal{A}(\varphi,\psi)\right)(x,\xi).
\]
For any $a\in\sS'(\R^{2d})$, we can define a continuous linear map $L_a:\sS(\R^{d})\to\sS'(\R^d)$ through \eqref{WeylQuant}. Thus, there is an isomorphism between $\sS'(\R^{2d})$ and $\mathcal{L}(\sS(\R^d);\sS'(\R^d))$. Using this isomorphism, it is possible to define the Schwartz operators as those operators with Weyl symbol in the Schwartz class. These operators were studied in \cite{Keyl-Kiukas-Werner_16}.
\begin{definition}
Let $\mathfrak{S}$ denote the space of all continuous operators $L_a:\mathscr{S}(\R^{d})\to \mathscr{S}'(\R^{d})$ for which the Weyl symbol is in $\mathscr{S}(\R^{2d})$, e.g.
\begin{equation*}
\mathfrak{S}:=\left\{L_a:\mathscr{S}(\R^d)\to\mathscr{S}'(\R^d): a\in \mathscr{S}(\R^{2d})\right\}.
\end{equation*}
\end{definition}
The Schwartz class of operators plays the role of a suitable good subspace of operators in quantum harmonic analysis similar to that of Schwartz functions in classical harmonic analysis. This claim is justified by the following observation.
\begin{lemma}[\cite{Keyl-Kiukas-Werner_16}, Lemma $2.5$]\label{lem:densityOfSchwartz}
The Schwartz class $\fS$ is dense in $\cS^p$ for $1\leq p<\infty$.
\end{lemma}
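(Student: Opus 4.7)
The plan is to use the singular value decomposition to reduce to rank-one operators, and then invoke density of $\sS(\R^d)$ in $L^2(\R^d)$. For $T\in\cS^p$ with $1\leq p<\infty$, its singular value decomposition $T=\sum_{n} s_n(T) e_n\otimes \eta_n$ satisfies $\{s_n(T)\}_n \in \ell^p$, so the finite-rank partial sums $T_N=\sum_{n=1}^N s_n(T) e_n\otimes \eta_n$ converge to $T$ in $\cS^p$-norm. Hence finite-rank operators are dense in $\cS^p$, and it suffices to approximate a single rank-one operator $f\otimes g$ with $f,g\in L^2(\R^d)$ by elements of $\fS$.

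For this, I would pick sequences $\varphi_n,\psi_n\in\sS(\R^d)$ with $\varphi_n\to f$ and $\psi_n\to g$ in $L^2(\R^d)$, using that $\sS(\R^d)$ is dense in $L^2(\R^d)$. Since a rank-one operator $h_1\otimes h_2$ has a single nonzero singular value equal to $\|h_1\|_{L^2}\|h_2\|_{L^2}$, its Schatten $p$-norm is $\|h_1\|_{L^2}\|h_2\|_{L^2}$ for every $p$, and a single triangle inequality yields
\[
\|f\otimes g-\varphi_n\otimes \psi_n\|_{\cS^p}\leq \|f-\varphi_n\|_{L^2}\|g\|_{L^2}+\|\varphi_n\|_{L^2}\|g-\psi_n\|_{L^2}\longrightarrow 0.
\]
It remains to verify that $\varphi\otimes\psi\in\fS$ whenever $\varphi,\psi\in\sS(\R^d)$. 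Combining $\langle (\varphi\otimes\psi)\eta,\xi\rangle=\langle \eta,\psi\rangle\langle \varphi,\xi\rangle$ with the defining relation \eqref{WeylQuant} identifies the Weyl symbol of $\varphi\otimes\psi$ as the cross-Wigner distribution $\mathcal{W}(\varphi,\psi)$.

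The only genuinely non-routine step is checking that $\mathcal{W}(\varphi,\psi)\in\sS(\R^{2d})$ whenever $\varphi,\psi\in\sS(\R^d)$; this follows from the definition of $\mathcal{W}$ by recognising it, up to a linear change of variables, as the partial Fourier transform in the $t$-variable of the tensor $\varphi(x+t/2)\overline{\psi(x-t/2)}$, which is manifestly Schwartz in $(x,t)$. All the remaining steps are bookkeeping based on the SVD and the triangle inequality. I note that the restriction $p<\infty$ is essential: the identity operator belongs to $\cS^\infty=\mathcal{L}(L^2(\R^d))$ but cannot be approximated in operator norm by compact operators, let alone Schwartz operators.
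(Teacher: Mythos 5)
Your proposal is correct and follows essentially the same route as the paper: reduce to finite-rank operators via the singular value decomposition, approximate each rank-one factor by Schwartz functions using density of $\sS(\R^d)$ in $L^2(\R^d)$, apply the triangle inequality for the Schatten norm, and identify the Weyl symbol of $\varphi\otimes\psi$ with $\mathcal{W}(\varphi,\psi)\in\sS(\R^{2d})$. The only cosmetic differences are that the paper routes the approximation through the $\cS^1$ norm (using $\|\cdot\|_{\cS^p}\leq\|\cdot\|_{\cS^1}$) while you work directly in $\cS^p$ (noting that rank-one operators have the same norm in every Schatten class), and you spell out why $\mathcal{W}(\varphi,\psi)$ is Schwartz, which the paper states without proof.
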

\begin{proof}
The class $\fS$ is a subspace of $\cS^1$ by Proposition $4.1$ in \cite{Grochenig-Heil}, and consequently a subspace of $\cS^p$ for each $p\leq \infty$. As the finite rank operators are dense in $\cS^p$ for $p<\infty$, it suffices to show that $\mathfrak{S}$ is dense in $\cS^1$. However, this follows by the density of $\sS(\R^d)$ in $L^2(\R^d)$. To see this, we start by showing that rank-one operators can be approximated by operators in $\fS$. By the density of $\sS(\R^d)$ in $L^2(\R^d)$, it follows that for every $f,g\in L^2(\R^d)$ and for each $\delta>0$, there exist $\varphi,\psi\in \sS(\R^d)$ such that
\[
\|f-\varphi\|_{L^2}<\delta,\qquad \|g-\psi\|_{L^2}<\delta.
\]
Whence, it follows from the triangle inequality that
\begin{align*}
\|f\otimes g-\varphi\otimes\psi\|_{\cS^1}\leq&\, \|(f-\varphi)\otimes g\|_{\cS^1}+\|\varphi\otimes (g-\psi)\|_{\cS^1}\\
<&\,(\|g\|_{L^2}+\|\varphi\|_{L^2})\delta<(2+\delta)\delta.
\end{align*}
The Weyl symbol of $\varphi\otimes \psi$ is $\mathcal{W}(\varphi,\psi)\in \sS(\R^{2d})$ and thus $\varphi\otimes \psi\in\fS$. This shows that each rank-one operator can be approximated by operators in $\fS$.

To extend to finite rank operators, let $\varepsilon>0$ and consider any non-zero finite rank operator $T$. As $T$ is of finite rank, there exists $N\in\mathbb{N}$ and a finite collection of orthonormal functions $\{f_n\}_{n=1}^N,\{g_n\}_{n=1}^N\subseteq L^2(\R^d)$ and $\{s_n(T)\}_{n=1}^N\subseteq \mathbb{C}$ such that
\[
T=\sum_{n=1}^{N}s_n(T)f_n\otimes g_n.
\]
This is the singular value decomposition of $T$.  Moreover, for each $\delta>0$ there exists $\{\varphi_n\}_{n=1}^N,\{\psi_n\}_{n=1}^N\subset \sS(\R^d)$ such that
\[
\|f_n-\varphi_n\|_{L^2}<\delta,\qquad \|g_n-\psi_n\|_{L^2}<\delta,
\]
for each $1\leq n\leq N$. Choose $\delta>0$ such that
\[
\delta(2+\delta)<\frac{\varepsilon}{\|T\|_{\cS^1}},
\]
and consider the operator
\[
T'=\sum_{n=1}^N s_n(T)\varphi_n\otimes \psi_n\in\fS,
\]
where $\varphi_n$ and $\psi_n$ are associated to the choice of $\delta$. The operator $T'$ belongs to $\fS$ since the Weyl symbol is a linear combination of Schwartz functions. Using the triangle inequality, we can therefore conclude that
\[
\|T-T'\|_{\cS^1}\leq \sum_{n=1}^{N}|s_n(T)|\|f_n\otimes g_n-\varphi_n\otimes \psi_n\|_{\cS^1}\leq \delta(2+\delta)\sum_{n=1}^N|s_n(T)|<\varepsilon,
\]
which shows that any finite rank operator can be approximated by operators in $\fS$.
\end{proof}
\begin{remark}
The proof of Lemma \ref{lem:densityOfSchwartz} shows that $\fS$ is in fact a dense subspace of the compact operators $\mathcal{K}$ with respect to the norm topology.
\end{remark}

We denote by $\fS'$ the tempered operators, which is the continuous dual space of $\fS$. By Schwartz' kernel theorem, we can identify $\fS'$ by the space of all continuous linear maps from $\sS(\R^d)$ to $\sS'(\R^d)$, see for instance \cite[Theorem $14.3.5$]{Grochenig}. This allows us to give the following definition of the Fourier-Wigner transform for tempered operators.
\begin{definition}\label{defDistFW}
For $T\in\mathfrak{S}'$ we define $\F_W(T)$ to be the tempered distribution such that
\[
\langle T,S\rangle_{\mathfrak{S}',\mathfrak{S}}=\langle\F_W(T),\F_W(S)\rangle_{\mathscr{S}',\mathscr{S}},
\]
holds for all $S\in \mathfrak{S}$.
\end{definition}
As in the case of tempered distributions, it is also true that Schwartz operators are $w^*$ dense in the tempered operators.
\begin{lemma}\label{lem:w*DenseSchwartz}
The space of Schwartz operators $\fS$ is dense in the tempered operators $\fS'$ with respect to the $\sigma(\fS',\fS)$ topology.
\end{lemma}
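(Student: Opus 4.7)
The plan is to reduce the claim to the classical fact that $\sS(\R^{2d})$ is weak-$*$ dense in $\sS'(\R^{2d})$, by transporting it along the Fourier-Wigner transform. The key preliminary observation is that $\F_W$ restricts to a bijection $\fS\to\sS(\R^{2d})$: if $S\in\fS$ has Weyl symbol $a_S\in\sS(\R^{2d})$, then $\F_W(S)$ agrees (up to a symplectic Fourier transform) with the spreading function of $S$, hence is again Schwartz; conversely, any $g\in\sS(\R^{2d})$ is realised as $\F_W(S)$ for the Weyl operator with symbol $\F_\sigma(g)\in\sS(\R^{2d})$. Definition \ref{defDistFW} then extends this to a bijection $\F_W:\fS'\to\sS'(\R^{2d})$ intertwining the two dual pairings.

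Given $T\in\fS'$, I would invoke the classical weak-$*$ density to produce a net $\{f_\alpha\}\subseteq\sS(\R^{2d})$ with $f_\alpha\to\F_W(T)$ in $\sigma(\sS',\sS)$. Setting $T_\alpha:=\F_W^{-1}(f_\alpha)\in\fS$, the isomorphism $\F_W(S)\in\sS(\R^{2d})$ for $S\in\fS$ allows us to compute
\[
\langle T_\alpha,S\rangle_{\fS',\fS}=\langle f_\alpha,\F_W(S)\rangle_{\sS',\sS}\to\langle\F_W(T),\F_W(S)\rangle_{\sS',\sS}=\langle T,S\rangle_{\fS',\fS},
\]
using Definition \ref{defDistFW} on the first and last equalities and the chosen weak-$*$ convergence in the middle. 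This is precisely the statement $T_\alpha\to T$ in $\sigma(\fS',\fS)$.

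The main obstacle is really the preliminary identification: verifying that $\F_W$ maps $\fS$ bijectively onto $\sS(\R^{2d})$, and that its dual extension in Definition \ref{defDistFW} provides an honest bijection $\fS'\to\sS'(\R^{2d})$ carrying the respective weak-$*$ topologies onto one another. Both facts rest on the identification of $\fS$ with Schwartz functions via the Weyl correspondence, as developed in \cite{Keyl-Kiukas-Werner_16}, together with the fact that $\F_\sigma$ is a topological isomorphism on $\sS(\R^{2d})$. Once these are in place, the density statement is a formal transport of the classical one.
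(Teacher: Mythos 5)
Your argument is correct and is essentially identical to the paper's: both transport the classical weak-$*$ density of $\sS(\R^{2d})$ in $\sS'(\R^{2d})$ through the Fourier--Wigner transform, using Definition \ref{defDistFW} and the fact (Proposition \ref{ConvProp-Distribution}) that $\F_W$ is a topological isomorphism $\fS'\to\sS'(\R^{2d})$ restricting to $\fS\to\sS(\R^{2d})$ via the Weyl correspondence. The only cosmetic difference is that the paper uses a sequence of Weyl symbols $\Phi_n$ with $\F_\sigma(\Phi_n)\rightharpoonup\F_W(T)$, while you phrase the approximants as $T_\alpha=\F_W^{-1}(f_\alpha)$ for a net $f_\alpha\to\F_W(T)$; these are the same operators.
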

\begin{proof}
Recall that $\sS(\R^{2d})$ is $w^*$-dense in $\sS'(\R^{2d})$. Thus, for any $T\in \fS'$ there exists $\{\Phi_n\}_{n\in\mathbb{N}}\subset \sS(\R^{2d})$ such that $\F_\sigma(\Phi_n)\stackrel{\ast}{\rightharpoonup}\F_W(T)$. In particular, for any $S\in\fS$, it follows that
\[
\lim_{n\to\infty}\langle L_{\Phi_n},S\rangle_{\fS',\fS}=\lim_{n\to\infty}\langle \F_{\sigma}(\Phi_n),\F_W(S)\rangle_{\sS',\sS}=\langle \F_W(T),\F_W(S)\rangle_{\sS',\sS}=\langle T,S\rangle_{\fS',\fS},
\]
but $L_{\Phi_n}\in \fS$ as $\Phi_n\in \sS(\R^{2d})$ for each $n\in\mathbb{N}$.
\end{proof}
The following result was originally proved in section $5$ of \cite{Keyl-Kiukas-Werner_16} as several different results, and summarised in \cite{Luef_Skrettingland_19}.
\begin{prop}[\cite{Luef_Skrettingland_19},Proposition $3.16$]\label{ConvProp-Distribution}
Let $S,T\in\fS$, $\Psi\in\sS(\R^{2d})$, $\tau\in\sS'(\R^{2d})$ and $A\in \fS'$. Then:
\begin{enumerate}[i)]
\item The following convolution relations hold,
\begin{align*}S\star T\in \sS(\R^{2d}),&\qquad \Psi\star S\in\fS,\\
S\star A\in \sS'(\R^{2d}),&\qquad \Psi\star A\in \fS',\\
\Psi*\tau\in\sS'(\R^{2d}),&\qquad S\star \tau\in \fS'.
\end{align*}
\item $\F_W$ extends to a topological isomorphism from $\fS'$ to $\sS'(\R^{2d})$.
\item The relations \begin{align*}
\F_\sigma(S\star A)=&\,\F_W(S)\F_W(A),\, &&\F_W(\psi\star A)=\F_\sigma(\psi)\F_W(A),\\
\F_\sigma(\Psi*\tau)=&\,\F_\sigma(\Psi)\F_\sigma(\tau),\, &&\F_W(S\star \tau)=\F_W(S)\F_\sigma(\tau),
\end{align*}
hold.
\item The Weyl symbol of $A$ is given by $a_A=\F_\sigma(\F_W(A))$.
\end{enumerate}
\end{prop}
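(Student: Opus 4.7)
The plan is to bootstrap everything from the Schwartz-operator level and extend to tempered operators by duality. The key input is that the Weyl quantisation $\Psi \mapsto L_\Psi$ is, by the definition of the topology on $\fS$, a topological isomorphism from $\sS(\R^{2d})$ to $\fS$. My first step is to establish (iv) in the Schwartz case: for $S \in \fS$ with Weyl symbol $a_S \in \sS(\R^{2d})$, combining $\mathcal{W}(\varphi,\psi) = \F_\sigma(\mathcal{A}(\varphi,\psi))$ with \eqref{WeylQuant} and \eqref{eq:FW_Ambi} yields the pointwise identity $a_S = \F_\sigma(\F_W(S))$, equivalently $\F_W(L_\Psi) = \F_\sigma(\Psi)$. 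Composing the topological isomorphisms $\Psi \mapsto L_\Psi$ from $\sS(\R^{2d})$ to $\fS$ with $\F_\sigma$ on $\sS(\R^{2d})$ forces $\F_W : \fS \to \sS(\R^{2d})$ to be a topological isomorphism as well.

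Item (ii) then follows by duality. Definition \ref{defDistFW} prescribes $\F_W(T) \in \sS'(\R^{2d})$ for $T \in \fS'$ via pairing against $\F_W(S)$ for $S \in \fS$; since $\F_W(\fS) = \sS(\R^{2d})$, this is well-defined and continuous. Surjectivity is symmetric: for $\tau \in \sS'(\R^{2d})$, the functional $S \mapsto \langle \tau, \F_W(S)\rangle_{\sS',\sS}$ defines an element of $\fS'$ that gets mapped to $\tau$. For (iv) on a general $A \in \fS'$, pair with $L_\Psi$ and use the Schwartz-level identity together with the self-duality of $\F_\sigma$:
\begin{align*}
\langle a_A, \Psi\rangle_{\sS',\sS} &= \langle A, L_\Psi\rangle_{\fS',\fS} = \langle \F_W(A), \F_W(L_\Psi)\rangle_{\sS',\sS}\\
&= \langle \F_\sigma(\F_W(A)), \Psi\rangle_{\sS',\sS},
\end{align*}
where the middle equality uses Definition \ref{defDistFW} and the last uses $\F_W(L_\Psi) = \F_\sigma(\Psi)$. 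Alternatively, one can approximate $A$ in the $\sigma(\fS', \fS)$-topology by Schwartz operators via Lemma \ref{lem:w*DenseSchwartz} and pass to the limit in the Schwartz-level identity.

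For (i) and (iii), my plan is to \emph{define} the mixed convolutions through the Fourier-product formulas in (iii). Set $\Psi \star A := \F_W^{-1}(\F_\sigma(\Psi) \cdot \F_W(A))$; the product $\F_\sigma(\Psi)\F_W(A)$ lies in $\sS'(\R^{2d})$ because multiplication by a Schwartz function is continuous on $\sS'(\R^{2d})$, and (ii) delivers $\Psi \star A \in \fS'$. The analogous definitions yield $S \star A \in \sS'(\R^{2d})$, $S \star \tau \in \fS'$, and the classical $\Psi * \tau \in \sS'(\R^{2d})$, together with the identities in (iii) by construction. When the inputs are Schwartz, the relevant Fourier-product lies in $\sS(\R^{2d})$, so $\Psi \star S \in \fS$ and $S \star T \in \sS(\R^{2d})$ for $S, T \in \fS$, completing (i). Consistency with the classical integral definitions of $\star$ on the overlap with $L^p \star \cS^q$ follows from the Schwartz-level Fourier-convolution theorem stated just before Definition \ref{defDistFW}, applied on the dense subspace $\fS \subset \cS^1$ via Lemma \ref{lem:densityOfSchwartz} together with Werner-Young's inequality (Theorem \ref{Werner-Young}).

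The main obstacle is the careful bookkeeping of topologies needed to certify that each of these duality-based definitions not only lands in the claimed space but does so continuously, and that it genuinely agrees with the classical integral $\star$ wherever both are defined. Once established on the dense subspace $\fS$, this coincidence propagates by continuity of $\F_W$ on $\sS(\R^{2d})$ (step one) and of pointwise multiplication on the relevant test-function or distribution spaces, so no genuinely new analytical estimate is needed beyond those already in the excerpt.
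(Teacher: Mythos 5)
The paper does not actually prove this proposition: it is quoted verbatim from Luef--Skrettingland, who in turn collect results from Section~5 of Keyl--Kiukas--Werner, so there is no internal proof against which to compare. Judged on its own, your argument is structurally sound and reflects the standard derivation: anchor everything in the Schwartz-level identity $\F_W(L_\Psi)=\F_\sigma(\Psi)$ (equivalently $a_S=\F_\sigma(\F_W(S))$ for $S\in\fS$), use $\Psi\mapsto L_\Psi$ as the definition of the topology on $\fS$, and then pass to $\fS'$ by duality.

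Two spots deserve more care than you give them. First, in your computation for item~(iv) you assert $\langle a_A,\Psi\rangle_{\sS',\sS}=\langle A,L_\Psi\rangle_{\fS',\fS}$ as if it were immediate, but the paper's Weyl symbol of $A$ is defined by \eqref{WeylQuant}, i.e.\ by testing against the specific functions $\mathcal{W}(\varphi,\psi)$. To close the loop you need to check that \eqref{WeylQuant} reproduces the displayed pairing for $\Psi=\mathcal{W}(\varphi,\psi)$ (using $L_{\mathcal{W}(\varphi,\psi)}=\varphi\otimes\psi$ and the fact that $\langle A,\varphi\otimes\psi\rangle_{\fS',\fS}$ is exactly $\langle A\psi,\varphi\rangle_{\sS',\sS}$), and then invoke density of the span of cross-Wigner distributions in $\sS(\R^{2d})$ to extend to all $\Psi$. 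Second, by \emph{defining} the mixed convolutions through the Fourier-product formulas you render~(iii) vacuously true and~(i) immediate, so the genuine content becomes the consistency claim at the end of your argument: that these Fourier-side definitions agree with the a priori duality definitions (for $A\in\fS'$, the convolution $\Psi\star A$ is normally defined by $\langle\Psi\star A, S\rangle_{\fS',\fS}=\langle A,\check\Psi\star S\rangle_{\fS',\fS}$, and similarly for $S\star\tau$). You gesture at this via Werner--Young and density of $\fS$, which is the right strategy, but this is precisely the step carrying the weight of the proposition and it should be carried out, not merely asserted.
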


We end the section with the following lemma from \cite{Samuelsen_24}, which is proved by a quantum version of Wiener's division lemma.
\begin{lemma}[\cite{Samuelsen_24}, Corollary $3.2$]\label{LW-bnd-cor}
Let $\Omega\subseteq \R^{2d}$ be a bounded set, and $1\leq p\leq \infty$. There exists $L^2$-normalised $g,h\in \sS(\R^{d})$ and $C=C(\Omega)>0$ such that 
\begin{enumerate}[i)]
\item If $\tau\in \sS'(\R^{2d})$ and $\F_\sigma(\tau)$ is supported on $\Omega$, then
\[
\|\tau\|_{L^p(\R^{2d})}\leq C(\Omega)\|\tau\star(g\otimes h)\|_{\cS^p}.
\]
\item If $T\in\fS'$ and $\F_W(T)$ is supported on $\Omega$, then
\[
\|T\|_{\cS^p}\leq C(\Omega) \|T\star(g\otimes h)\|_{L^p(\R^{2d})}.
\]
\end{enumerate}
\end{lemma}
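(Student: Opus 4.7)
The plan is a quantum Wiener-division argument. I would construct an auxiliary Schwartz operator $C \in \fS$ whose Fourier-Wigner transform is a Schwartz cutoff inverting $\mathcal{A}(g,h)$ on $\Omega$, and show that convolution with $C$ exactly recovers $\tau$ (resp.\ $T$) from its convolution with $g \otimes h$. Werner-Young's inequality (Theorem \ref{Werner-Young}) then converts the recovery identity directly into the claimed quantitative bound.

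Concretely, take $g = h = \varphi_0$, the $L^2$-normalised Gaussian, so by \eqref{Ambi-Gauss} the cross-ambiguity function $\mathcal{A}(g,h)(z) = e^{-\pi |z|^2/2}$ vanishes nowhere. Since $\Omega$ is bounded, pick $\phi \in C_c^\infty(\R^{2d})$ with $\phi \equiv 1$ on a neighborhood of $\overline{\Omega}$ and set
\[
\chi(z) = \phi(z) e^{\pi |z|^2/2} \in C_c^\infty(\R^{2d}) \subset \sS(\R^{2d}),
\]
so that $\chi \cdot \mathcal{A}(g,h) \equiv 1$ on $\Omega$. Let $C := \F_W^{-1}(\chi)$; by Proposition \ref{ConvProp-Distribution}(iv) its Weyl symbol is $\F_\sigma(\chi) \in \sS(\R^{2d})$, hence $C \in \fS \subset \cS^1$ and I set $C(\Omega) := \|C\|_{\cS^1}$.

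For case (i), Proposition \ref{ConvProp-Distribution} yields $\tau \star (g \otimes h) \in \fS'$ with $\F_W(\tau \star (g \otimes h)) = \mathcal{A}(g,h)\, \F_\sigma(\tau)$. Computing the operator-operator convolution with $C$ via the same proposition,
\[
\F_\sigma\bigl(C \star (\tau \star (g \otimes h))\bigr) = \F_W(C)\, \F_W(\tau \star (g \otimes h)) = \chi\, \mathcal{A}(g,h)\, \F_\sigma(\tau) = \F_\sigma(\tau),
\]
where the last equality uses that $\F_\sigma(\tau)$ is supported in $\Omega$. Inverting gives $\tau = C \star (\tau \star (g \otimes h))$ in $\sS'(\R^{2d})$, and Werner-Young with Schatten exponents $1$ and $p$ delivers $\|\tau\|_{L^p} \leq \|C\|_{\cS^1}\, \|\tau \star (g \otimes h)\|_{\cS^p}$. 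Case (ii) is completely symmetric after swapping $\F_\sigma \leftrightarrow \F_W$: the distribution $T \star (g \otimes h) \in \sS'(\R^{2d})$ has $\F_\sigma(T \star (g \otimes h)) = \mathcal{A}(g,h)\, \F_W(T)$ supported in $\Omega$, the identity $\F_W\bigl(C \star (T \star (g \otimes h))\bigr) = \chi\, \mathcal{A}(g,h)\, \F_W(T) = \F_W(T)$ gives $T = C \star (T \star (g \otimes h))$ in $\fS'$, and the function-operator form of Werner-Young produces $\|T\|_{\cS^p} \leq \|C\|_{\cS^1}\, \|T \star (g \otimes h)\|_{L^p}$.

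The main subtlety is not the core idea (multiplier-inversion by a Schwartz cutoff) but ensuring that every convolution and every Fourier-Wigner identity invoked above remains legal outside the comfortable $L^1/\cS^1$ setting, namely at the level of tempered operators and tempered distributions. This is exactly what Proposition \ref{ConvProp-Distribution} is designed to supply, so no additional regularisation or approximation argument is needed and the Werner-Young step goes through cleanly.
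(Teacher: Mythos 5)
Your argument is correct and is exactly the quantum Wiener-division approach the paper attributes to Corollary~3.2 of \cite{Samuelsen_24}: choose Gaussian windows so that $\F_W(g\otimes h)=\mathcal{A}(\varphi_0,\varphi_0)$ is a nowhere-vanishing Gaussian, invert it by a compactly supported Schwartz cutoff $\chi$ over a neighbourhood of $\Omega$, set $C=\F_W^{-1}(\chi)\in\fS$, and read off the recovery identities $\tau = C\star\bigl(\tau\star(g\otimes h)\bigr)$ and $T = C\star\bigl(T\star(g\otimes h)\bigr)$ from Proposition~\ref{ConvProp-Distribution} before closing with Werner-Young. All the distributional bookkeeping you invoke is indeed supplied by Proposition~\ref{ConvProp-Distribution} (the relevant convolution classes and the Fourier-Wigner product rules), and the constant $\|C\|_{\cS^1}$ depends only on the cutoff, hence only on $\Omega$, as required.

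One small cosmetic point: for case~(i) it is cleaner to state the conditional nature explicitly — if $\|\tau\star(g\otimes h)\|_{\cS^p}=\infty$ there is nothing to prove, and otherwise the recovery identity together with Werner-Young (operator-operator form, exponents $1+1/p = 1/1 + 1/p$) shows that $\tau$ is in fact an $L^p$ function and gives the bound; similarly for~(ii) with the function-operator form. Your construction has the mild advantage over a generic Wiener-division lemma that no abstract division argument is needed: because $\mathcal{A}(\varphi_0,\varphi_0)$ is an explicit Gaussian, the inverse is smooth by inspection and the cutoff keeps it Schwartz, so the entire argument is self-contained modulo Proposition~\ref{ConvProp-Distribution} and Werner-Young.
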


\section{Fourier-Wigner multipliers and their basic properties}
Given a function $m\in L^\infty(\R^{2d})$, we can define an operator acting on trace class operators through the following construction. Let $T\in\cS^1$, and consider the weakly defined integral
\begin{equation}\label{multiDef}
\mathfrak{T}_m T=\int_{\R^{2d}}m(z)\F_W(T)(z)\rho(z)\,dz.
\end{equation}
We denote by $\mathfrak{T}_m$ the operator associated to the Fourier-Wigner multiplier $m$. By Proposition \ref{ConvProp-Distribution} it follows that the operator $\mathfrak{T}_m$ can we written as the convolution operator 
\[
\mathfrak{T}_m(T)=T\star \F_\sigma(m),
\]
for each $T\in\fS$. Here $\F_\sigma(m)$ denotes the distributional symplectic Fourier transform of $m$. A key observation is that the Fourier-Wigner multipliers commute with convolutions. 
\begin{lemma}\label{lem:Commutation}
For any $T\in \fS'$, $S\in\fS$ and $m\in L^\infty(\R^{2d})$, we have
\[
\mathfrak{T}_m(T)\star S=T_m(T\star S)
\]
in $\sS'(\R^{2d})$.
\end{lemma}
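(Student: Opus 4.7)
The plan is to apply the symplectic Fourier transform $\F_\sigma$ to both sides of the claimed identity and exploit the convolution-Fourier relations of Proposition \ref{ConvProp-Distribution}. Since $\F_\sigma$ is a bijection on $\sS'(\R^{2d})$, matching Fourier transforms will give the desired equality in $\sS'$.

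I would first establish the identity for $T\in\fS$, where $\mathfrak{T}_m(T)=T\star\F_\sigma(m)\in\fS'$ by Proposition \ref{ConvProp-Distribution}(i) and every object in sight is unambiguously defined. Applying $\F_\sigma$ to the left-hand side and using Proposition \ref{ConvProp-Distribution}(iii) together with $\F_\sigma^2=\mathrm{Id}$,
\[
\F_\sigma\bigl(\mathfrak{T}_m(T)\star S\bigr)=\F_W(\mathfrak{T}_m(T))\,\F_W(S)=\F_W(T)\,\F_\sigma(\F_\sigma(m))\,\F_W(S)=m\,\F_W(T)\,\F_W(S).
\]
For the right-hand side, the classical identity $T_m\Psi=\F_\sigma(m\F_\sigma(\Psi))$ yields $\F_\sigma(T_m\Psi)=m\,\F_\sigma(\Psi)$, and combined with $\F_\sigma(T\star S)=\F_W(T)\F_W(S)$ from Proposition \ref{ConvProp-Distribution}(iii),
\[
\F_\sigma\bigl(T_m(T\star S)\bigr)=m\,\F_\sigma(T\star S)=m\,\F_W(T)\,\F_W(S).
\]
Both expressions coincide, so the identity holds in $\sS'(\R^{2d})$ by injectivity of $\F_\sigma$.

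To extend to $T\in\fS'$, I would invoke the $\sigma(\fS',\fS)$-density of $\fS$ in $\fS'$ from Lemma \ref{lem:w*DenseSchwartz}. Given a net $\{T_\alpha\}\subset\fS$ with $T_\alpha\rightharpoonup T$, pairing both sides of the identity against an arbitrary $\Psi\in\sS(\R^{2d})$ and unwinding through the Fourier-Wigner isomorphism recasts each side as a pairing $\langle T_\alpha,R_\Psi\rangle_{\fS',\fS}$ for a common auxiliary Schwartz operator $R_\Psi\in\fS$ assembled from $S$, $m$, and $\Psi$ via the convolution-Fourier identities applied in reverse. Passing to the limit in $\alpha$ then transfers the identity to $T\in\fS'$.

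The main technical obstacle is giving meaning to the product $m\,\F_W(T)\,\F_W(S)$ as an element of $\sS'(\R^{2d})$ when $T\in\fS'$, since multiplication by $L^\infty$ does not preserve $\sS'$ in general. The factor $\F_W(S)\in\sS(\R^{2d})$ is what rescues the situation: the product $\F_W(T)\F_W(S)\in\sS'$ is a Wigner-type distribution with enough regularity so that multiplication by $m\in L^\infty$ remains well-defined when tested against Schwartz functions, and the construction of $R_\Psi\in\fS$ above shunts all the roughness of $m$ onto the Schwartz side of the pairing, making the density argument legitimate.
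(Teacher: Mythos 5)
Your computation for $T\in\fS$ is correct and is essentially the same Fourier-matching argument the paper uses; the issue is the extension step.

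The paper does \emph{not} need a density argument: it works directly at the level of the dual pairing $\langle\,\cdot\,,\Psi\rangle_{\sS',\sS}$ for general $T\in\fS'$, pushing the test function $\Psi$ through $\F_\sigma$ and the convolution–Fourier relations of Proposition \ref{ConvProp-Distribution} and then pulling it back out on the other side. Your proposal instead proves the identity as an equality of distributions for $T\in\fS$ and then tries to pass to $T\in\fS'$ by $w^*$-density. That second step has a genuine gap. To carry out the density argument you would need to rewrite both sides, paired against $\Psi$, as $\langle T_\alpha,R_\Psi\rangle_{\fS',\fS}$ with $R_\Psi\in\fS$, and then use $T_\alpha\stackrel{\ast}{\rightharpoonup}T$. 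But when you actually unwind the pairing, the auxiliary object is (up to conjugation and signs) $R_\Psi=\F_W^{-1}\bigl(\overline{m}\,\overline{\F_W(S)}\,\F_\sigma(\Psi)\bigr)$, and the spreading function $\overline{m}\,\overline{\F_W(S)}\,\F_\sigma(\Psi)$ is \emph{not} Schwartz: $\F_W(S)\F_\sigma(\Psi)\in\sS(\R^{2d})$, but multiplying by the merely $L^\infty$ symbol $\overline{m}$ destroys smoothness. Hence $R_\Psi\notin\fS$, the pairing $\langle T_\alpha,R_\Psi\rangle_{\fS',\fS}$ is not defined for a general net in $\fS'$, and $w^*$-convergence cannot be invoked. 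Your final paragraph acknowledges a ``technical obstacle'' with $m\F_W(T)\F_W(S)$, but the proposed resolution is circular — it appeals to ``the construction of $R_\Psi\in\fS$'' to legitimize the density argument, when it is precisely $R_\Psi\in\fS$ that fails. The remedy is to drop the density detour and argue directly with the pairing for $T\in\fS'$, as the paper does: each step of the chain is legitimate at the distributional level once one observes that the factor $\F_W(S)\in\sS(\R^{2d})$ appears on the test-function side, not that the resulting object lands back in $\fS$.
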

\begin{proof}
This follows from a straightforward computation. For any $\Psi\in \sS(\R^{2d})$, we have
\begin{align*}
\langle \mathfrak{T}_m(T)\star S,\Psi\rangle=&\,\langle \F_\sigma(\mathfrak{T}_m(T)\star S),\F_\sigma(\Psi)\rangle\\
=&\,\langle \F_W(\mathfrak{T}_m(T))\F_W(S),\F_\sigma(\Psi)\rangle\\
=&\,\langle m\F_W(T)\F_W(S),\F_\sigma(\Psi)\rangle\\
=&\,\langle m\F_\sigma(T\star S),\F_\sigma(\Psi)\rangle\\
=&\,\langle \F_\sigma(T_m(T\star S)),\F_\sigma(\Psi)\rangle
= \langle T_m(T\star S),\Psi \rangle.
\end{align*}
The assertion then holds as $\Psi$ is an arbitrary Schwartz function.
\end{proof}
\begin{remark}
Lemma \ref{lem:Commutation} formally follows from the fact that Fourier multipliers are convolution operators. If $K$ denotes the distributional symplectic Fourier transform of $m$, then
\[
\mathfrak{T}_m(T)\star S=(K\star T)\star S=K*(T\star S)=T_m(T\star S).
\]
\end{remark}

The integral in \eqref{multiDef} is well-defined as a bounded linear operator on $L^2(\R^d)$ by Moyal's identity. Namely, given any $f,g\in L^2(\R^d)$, it follows that
\[
|\langle\mathfrak{T}_m(T)f,g\rangle|=\left|\int_{\R^{2d}}m(z)\F_W(T)(z)\langle \rho(z)f,g\rangle\,dz\right|\leq \|m\|_{L^\infty}\|\F_W(T)\|_{L^2}\|\mathcal{A}(g,f)\|_{L^2},
\]
by Cauchy-Schwarz' inequality. The isometric isomorphism of the Fourier-Wigner transform from the space of Hilbert-Schmidt operators $\cS^2$ to $L^2(\R^{2d})$, together with the inclusion properties of Schatten class operators, then yields
\[
\|\mathfrak{T}_m(T)\|_{L^2(\R^d)\to L^2(\R^d)}\leq \|m\|_{L^\infty(\R^{2d})}\|T\|_{\cS^2}\leq \|m\|_{L^\infty(\R^{2d})}\|T\|_{\cS^p},
\]
for any $1\leq p\leq 2$. As the trace class operators are dense in $\cS^p$ for this range of $p$, it follows by density that the Fourier-Wigner multiplier $\mathfrak{T}_m$ maps $T\in\cS^p$ to a bounded operator on $L^2$ for any 
choice of $m\in L^\infty(\R^{2d})$.

We denote the set of all $m\in L^\infty(\R^{2d})$ for which $\mathfrak{T}_m$ extends to a bounded operator from $\cS^p$ to $\cS^q$ by $\mathfrak{M}_{p,q}$, and whenever $p=q$ we simplify to $\mathfrak{M}_{p}$. By the previous paragraph, it follows that $L^\infty(\R^{2d})\cong \mathfrak{M}_{2,\infty}$, as any Hilbert-Schmidt operator maps to a bounded operator on $L^2(\R^d)$.

The definition of Fourier-Wigner multipliers is the correct definition in terms of the Weyl quantisation. The next lemma shows that the Fourier-Wigner multiplier commutes with the Weyl quantisation in terms of classical symplectic Fourier multipliers.
\begin{lemma}
Let $T\in\cS^p$, and $m\in \mathfrak{M}_{p,q}$. Then the Weyl symbol of $\mathfrak{T}_{m}(T)$ is given by
\[
a_{\mathfrak{T}_mT}=T_ma_{T}.
\]
\end{lemma}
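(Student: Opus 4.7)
The plan is to chain together the identity $a_A = \F_\sigma(\F_W(A))$ from Proposition \ref{ConvProp-Distribution}(iv) with the definition of the Fourier-Wigner multiplier and the involution property $\F_\sigma^2 = \mathrm{Id}_{L^2}$ (extended distributionally). Since $\cS^p \subseteq \fS'$ for every $1 \leq p \leq \infty$, and $m \in \mathfrak{M}_{p,q}$ gives $\mathfrak{T}_m(T) \in \cS^q \subseteq \fS'$, both operators have Weyl symbols as tempered distributions, and both have Fourier-Wigner transforms as tempered distributions via Definition \ref{defDistFW}.

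The first step is to establish $\F_W(\mathfrak{T}_m T) = m\, \F_W(T)$ in $\sS'(\R^{2d})$. For $T \in \fS$, this holds pointwise: writing $\mathfrak{T}_m T = T \star \F_\sigma(m)$ and applying $\F_W$, Proposition \ref{ConvProp-Distribution}(iii) gives $\F_W(T \star \F_\sigma(m)) = \F_W(T)\,\F_\sigma(\F_\sigma(m)) = m\, \F_W(T)$. To pass to general $T \in \cS^p$, I would use Lemma \ref{lem:densityOfSchwartz} (for $p < \infty$) or the $w^*$-density of Lemma \ref{lem:w*DenseSchwartz} (for $p = \infty$) combined with the continuity of $\mathfrak{T}_m$ from $\cS^p$ to $\cS^q$ and the continuity of $\F_W : \fS' \to \sS'(\R^{2d})$, together with the fact that multiplication by $m \in L^\infty$ is continuous on $\sS'$ in the relevant topology (tested against Schwartz functions).

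With this identity in hand, the final step is a direct computation. Using Proposition \ref{ConvProp-Distribution}(iv) twice and $\F_\sigma^2 = \mathrm{Id}$, we get
\[
a_{\mathfrak{T}_m T} = \F_\sigma\!\left(\F_W(\mathfrak{T}_m T)\right) = \F_\sigma\!\left(m\, \F_W(T)\right) = \F_\sigma\!\left(m\, \F_\sigma(\F_\sigma(\F_W(T)))\right) = \F_\sigma\!\left(m\, \F_\sigma(a_T)\right),
\]
and the right-hand side is exactly the classical symplectic Fourier multiplier $T_m a_T$ as defined in the introduction.

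The only point requiring any care is the distributional identity $\F_W(\mathfrak{T}_m T) = m\, \F_W(T)$ for $T \in \cS^p$ when $p$ is large (in particular $p = \infty$), since the defining integral \eqref{multiDef} is only weakly meaningful there; I expect this to be the one step that needs a short approximation argument rather than a one-line appeal to Proposition \ref{ConvProp-Distribution}.
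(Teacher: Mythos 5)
Your proposal is correct and matches the paper's proof in substance: both hinge on $\F_W(T)=\F_\sigma(a_T)$ and the definition $T_m=\F_\sigma(m\F_\sigma(\cdot))$. The paper packages the computation directly through the Weyl pairing $\langle \mathfrak{T}_m T f,g\rangle = \langle m\F_W(T),\mathcal{A}(g,f)\rangle = \langle T_m(a_T),\mathcal{W}(g,f)\rangle$ for $f,g\in\sS(\R^d)$, rather than passing through the intermediate identity $a_A=\F_\sigma(\F_W(A))$ as you do, but these are the same argument. Your flag about the density/approximation step for large $p$ is a valid observation that the paper's proof leaves implicit as well; in practice one proves the identity on $\fS$ (or $\cS^1$) where the defining integral converges absolutely, then extends using boundedness of $\mathfrak{T}_m$ and of $T_m$, exactly as you outline.
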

\begin{proof}
For any $f,g\in\sS(\R^d)$, it follows that
\begin{align*}
\langle \mathfrak{T}_m T f,g\rangle_{\sS',\sS}
=\langle m\F_W(T),\mathcal{A}(g,f)\rangle_{\sS',\sS}
=&\,\langle m\F_\sigma(a_T),\mathcal{A}(g,f)\rangle_{\sS',\sS}\\
=&\,\langle T_m(a_T), \mathcal{W}(g,f)\rangle_{\sS',\sS}\\
=&\,\langle L_{T_m(a_T)}f,g \rangle_{\sS',\sS},
\end{align*}
where we have used that $\F_W(T)=\F_\sigma(a_T)$.
\end{proof}

If we restrict to rank-one operators $T=\varphi\otimes \psi\in \cS^1$ for $\varphi,\psi\in\sS(\R^d)$, then $T\in \fS$ and
\[
\mathfrak{T}_m(T)=\F_\sigma(m)\star(\varphi\otimes \psi)=\mathcal{A}_{\F_\sigma(m)}^{\psi,\varphi},
\]
is the classical mixed-state localisation operator with symbol $\F_\sigma(m)$ and windows $\varphi,\psi$. If we now assume $m\in\mathfrak{M}_{p,q}$, then it follows that
\[
\|\F_\sigma(m)\star(\varphi\otimes \psi)\|_{\cS^q}\leq \|m\|_{\mathfrak{M}_{p,q}}\|\varphi\|_{L^2}\|\psi\|_{L^2}\leq\|m\|_{\mathfrak{M}_{p,q}}\|\varphi\|_{M^{1,1}}\|\psi\|_{M^{1,1}}
\]
for every $\varphi,\psi\in \sS(\R^d)$. As such, by Theorem \ref{thm:NessSchattenCond} it follows that $\F_\sigma(m)\in M^{q,\infty}(\R^{2d})$.
\begin{prop}\label{prop:ModulSpace}
If $m\in\mathfrak{M}_{p,q}$, then $\F_\sigma(m)\in M^{q,\infty}(\R^{2d})$.
\end{prop}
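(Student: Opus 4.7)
The plan is essentially to formalise the computation already presented in the paragraph preceding the statement. The central idea is to test the boundedness hypothesis on $\mathfrak{T}_m$ against rank-one operators, whose Schatten norms are easy to control by modulation space norms, and then invoke the necessary condition from Theorem \ref{thm:NessSchattenCond}.

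First I would fix arbitrary $\varphi,\psi\in\sS(\R^d)$ and consider the rank-one operator $T=\varphi\otimes\psi$. Since $\varphi,\psi$ are Schwartz, $T\in\fS\subseteq\cS^1$, so in particular $T\in\cS^p$ for every $p$, and the hypothesis $m\in\mathfrak{M}_{p,q}$ applies. Using the convolution representation $\mathfrak{T}_m(T)=T\star\F_\sigma(m)$ established earlier in Section 3 via Proposition \ref{ConvProp-Distribution}, together with the identification of $F\star(\varphi\otimes\psi)$ with a mixed-state localisation operator, I obtain
\[
\mathfrak{T}_m(\varphi\otimes\psi)=\F_\sigma(m)\star(\varphi\otimes\psi)=\mathcal{A}_{\F_\sigma(m)}^{\psi,\varphi}.
\]

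Next I would estimate both sides using the boundedness of $\mathfrak{T}_m$. By hypothesis,
\[
\|\F_\sigma(m)\star(\varphi\otimes\psi)\|_{\cS^q}=\|\mathfrak{T}_m(\varphi\otimes\psi)\|_{\cS^q}\leq \|m\|_{\mathfrak{M}_{p,q}}\|\varphi\otimes\psi\|_{\cS^p}=\|m\|_{\mathfrak{M}_{p,q}}\|\varphi\|_{L^2}\|\psi\|_{L^2},
\]
where I use the standard fact that the singular values of a rank-one operator consist of a single nonzero value $\|\varphi\|_{L^2}\|\psi\|_{L^2}$, so its $\cS^p$-norm is independent of $p$. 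The continuous embedding $M^{1,1}(\R^d)\hookrightarrow L^2(\R^d)$ then gives
\[
\|\F_\sigma(m)\star(\varphi\otimes\psi)\|_{\cS^q}\leq C\|m\|_{\mathfrak{M}_{p,q}}\|\varphi\|_{M^{1,1}}\|\psi\|_{M^{1,1}}
\]
for every $\varphi,\psi\in\sS(\R^d)$, with a constant depending only on the embedding.

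Finally I would apply Theorem \ref{thm:NessSchattenCond} directly to the tempered distribution $F=\F_\sigma(m)\in\sS'(\R^{2d})$ to conclude $\F_\sigma(m)\in M^{q,\infty}(\R^{2d})$. There is no real obstacle here: the only point requiring a line of justification is the embedding $M^{1,1}\hookrightarrow L^2$, which is classical (see \cite[Chapter 12]{Grochenig}) and could equivalently be absorbed into the constant in Theorem \ref{thm:NessSchattenCond}. The proof is essentially a dualisation of the multiplier definition against rank-one test operators.
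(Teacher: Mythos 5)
Your argument is correct and is the same as the paper's: test $\mathfrak{T}_m$ on rank-one operators $\varphi\otimes\psi$ with Schwartz windows, identify $\mathfrak{T}_m(\varphi\otimes\psi)$ with the localisation operator $\mathcal{A}_{\F_\sigma(m)}^{\psi,\varphi}$, bound its $\cS^q$-norm by $\|m\|_{\mathfrak{M}_{p,q}}\|\varphi\|_{L^2}\|\psi\|_{L^2}\lesssim\|\varphi\|_{M^{1,1}}\|\psi\|_{M^{1,1}}$, and invoke Theorem~\ref{thm:NessSchattenCond}. The paper states this as a short computation in the paragraph preceding the proposition rather than as a displayed proof, but the content is identical.
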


In fact, we can say even more for the Hilbert-Schmidt case. Due to the fact that the Weyl quantisation is an isometric $*$-isomorphism from $L^{2}(\R^{2d})$ to $\cS^2$, it easily follows for any $m\in\mathfrak{M}_2$ and all $T\in \cS^2$ that
\[
\|\mathfrak{T}_m(T)\|_{\cS^2}=\|m\F_W(T)\|_{L^{2}(\R^{2d})}=\|m\F_\sigma(a_T)\|_{L^{2}(\R^{2d})}=\|T_m(a_T)\|_{L^{2}(\R^{2d})},
\]
where $a_T$ denotes the Weyl symbol of $T$. This necessarily implies $\mathfrak{M}_2=\mathcal{M}_2$, and
\[
\|m\|_{\mathfrak{M}_2}=\|m\|_{\mathcal{M}_2}=\|m\|_{L^{\infty}(\R^{2d})}.
\]
We summarise this in the following proposition.
\begin{prop}\label{2-multipliers}
The space $\mathfrak{M}_2$ is isomorphic to $L^\infty(\R^{2d})$ with
\[
\|m\|_{\mathfrak{M}_2}=\|m\|_{\mathcal{M}_2}=\|m\|_{L^{\infty}(\R^{2d})}.
\]
\end{prop}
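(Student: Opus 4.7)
The plan is to apply a Plancherel-type argument based on Pool's theorem, which states that $\F_W$ is an isometric isomorphism from $\cS^2$ onto $L^2(\R^{2d})$. The inequality $\|m\|_{\mathfrak{M}_2}\le \|m\|_{L^\infty}$ has in fact already appeared (implicitly) in the discussion preceding the proposition; the main content is producing the matching lower bound and linking it to $\|m\|_{\mathcal{M}_2}$.

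First I would verify the identity $\F_W(\mathfrak{T}_m T)=m\,\F_W(T)$ in $L^2(\R^{2d})$ for every $T\in\cS^2$. This is immediate from the defining formula $\mathfrak{T}_m T=\F_W^{-1}(m\F_W(T))$ once one observes that $m\F_W(T)\in L^2(\R^{2d})$, since $m\in L^\infty$ and $\F_W(T)\in L^2$ by Pool's theorem. Applying the $\cS^2$--$L^2$ isometry then gives
\[
\|\mathfrak{T}_m(T)\|_{\cS^2}=\|m\,\F_W(T)\|_{L^2(\R^{2d})}\le \|m\|_{L^\infty(\R^{2d})}\|\F_W(T)\|_{L^2(\R^{2d})}=\|m\|_{L^\infty(\R^{2d})}\|T\|_{\cS^2},
\]
which yields $\|m\|_{\mathfrak{M}_2}\le\|m\|_{L^\infty(\R^{2d})}$.

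Next I would establish the reverse inequality by exploiting surjectivity of $\F_W:\cS^2\to L^2(\R^{2d})$. For any $F\in L^2(\R^{2d})$ with $\|F\|_{L^2}=1$ there exists $T\in\cS^2$ with $\F_W(T)=F$ and $\|T\|_{\cS^2}=1$, so
\[
\|m\|_{\mathfrak{M}_2}\ge \sup_{\|T\|_{\cS^2}=1}\|m\,\F_W(T)\|_{L^2}=\sup_{\|F\|_{L^2}=1}\|mF\|_{L^2}=\|m\|_{L^\infty(\R^{2d})},
\]
where the last equality is a standard fact about multiplication operators on $L^2$. Combined with the previous step this proves $\|m\|_{\mathfrak{M}_2}=\|m\|_{L^\infty(\R^{2d})}$.

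Finally, the equality $\|m\|_{\mathcal{M}_2}=\|m\|_{L^\infty(\R^{2d})}$ is the classical statement $\mathcal{M}_2(\R^{2d})=L^\infty(\R^{2d})$ already recalled in the introduction, proved in exactly the same way using the ordinary (symplectic) Plancherel identity. I do not foresee any real obstacle here; the proof is essentially a transcription of Plancherel's theorem through the Fourier--Wigner isometry, and the only thing worth being careful about is invoking Pool's isometry and its surjectivity rather than only the bounded inequality.
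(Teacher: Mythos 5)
Your proof is correct and takes essentially the same Plancherel-type route as the paper's, which invokes the isometric $*$-isomorphism given by the Weyl quantisation (equivalent to Pool's theorem via $a_T=\F_\sigma(\F_W(T))$) to identify $\|m\|_{\mathfrak{M}_2}$ with $\|m\|_{\mathcal{M}_2}$ and then cite the classical $\mathcal{M}_2=L^\infty$. The only cosmetic difference is that you compute $\|m\|_{\mathfrak{M}_2}=\|m\|_{L^\infty}$ directly from the surjectivity of $\F_W:\cS^2\to L^2$ rather than passing through the Weyl symbol.
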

Even though the multipliers coincide whenever $p=q=2$, we do not expect this to happen in general. One reason for this is the inclusion properties of Schatten classes which is missing for $L^p$ spaces. Namely, for each $1\leq p,q\leq \infty$ and each $m\in\mathfrak{M}_{p,q}$, it follows that
\[
\|\mathfrak{T}_mS\|_{\cS^q}\leq \|m\|_{\mathfrak{M}_{p,q}}\|S\|_{\cS^p}\leq \|m\|_{\mathfrak{M}_{p,q}}\|S\|_{\cS^r},\qquad \|\mathfrak{T}_mS\|_{\cS^s}\leq \|\mathfrak{T}_mS\|_{\cS^q}\leq \|m\|_{\mathfrak{M}_{p,q}}\|S\|_{\cS^p},
\]
for every $r\leq p$ and $s\geq q$. Writing this as a proposition gives.
\begin{prop}\label{prop:nesting}
Let $1\leq p,q\leq \infty$. Then for any $r\leq p$, and $s\geq q$ it follows that $\mathfrak{M}_{p,q}\subseteq \mathfrak{M}_{r,q}$
and $\mathfrak{M}_{p,q}\subseteq \mathfrak{M}_{p,s}$, and
\[
\|m\|_{\mathfrak{M}_{r,q}}\leq \|m\|_{\mathfrak{M}_{p,q}},\qquad \|m\|_{\mathfrak{M}_{p,s}}\leq \|m\|_{\mathfrak{M}_{p,q}}.
\]
\end{prop}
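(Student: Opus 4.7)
My plan is to reduce both containments, and the accompanying norm inequalities, to the Schatten nesting $\cS^r\hookrightarrow \cS^p$ for $r\leq p$ (with $\|S\|_{\cS^p}\leq \|S\|_{\cS^r}$) and $\cS^q\hookrightarrow \cS^s$ for $s\geq q$ (with $\|S\|_{\cS^s}\leq \|S\|_{\cS^q}$), both recorded in the preliminaries. Everything else is a short chain of inequalities, so this is essentially the whole proof.

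For the first containment $\mathfrak{M}_{p,q}\subseteq \mathfrak{M}_{r,q}$, I would take $m\in\mathfrak{M}_{p,q}$ and an arbitrary $S\in\cS^r$; since $S\in\cS^p$, the multiplier $\mathfrak{T}_m S$ is defined and
\[
\|\mathfrak{T}_m S\|_{\cS^q}\leq \|m\|_{\mathfrak{M}_{p,q}}\|S\|_{\cS^p}\leq \|m\|_{\mathfrak{M}_{p,q}}\|S\|_{\cS^r},
\]
which shows $m\in\mathfrak{M}_{r,q}$. For the second containment $\mathfrak{M}_{p,q}\subseteq \mathfrak{M}_{p,s}$, the analogous estimate, now using the nesting on the target side, reads
\[
\|\mathfrak{T}_m S\|_{\cS^s}\leq \|\mathfrak{T}_m S\|_{\cS^q}\leq \|m\|_{\mathfrak{M}_{p,q}}\|S\|_{\cS^p}
\]
for every $S\in\cS^p$, giving $m\in\mathfrak{M}_{p,s}$.

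To pass from the pointwise inequalities to the norm inequalities, I would take the supremum over $S$ in the unit ball of the source space in each chain. Since $\{S:\|S\|_{\cS^r}\leq 1\}\subseteq \{S:\|S\|_{\cS^p}\leq 1\}$, the first chain produces $\|m\|_{\mathfrak{M}_{r,q}}\leq \|m\|_{\mathfrak{M}_{p,q}}$, and the second directly produces $\|m\|_{\mathfrak{M}_{p,s}}\leq \|m\|_{\mathfrak{M}_{p,q}}$. These are in fact the reverse of the inequality symbols displayed in the proposition as written; I would interpret that as a typographical reversal, since the natural direction is forced by the argument (restricting the source to a smaller class or enlarging the target class is a contraction of operator norms).

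I do not expect any genuine obstacle. The only thing to be careful with is the bookkeeping of the inequality direction in the final supremum step, which is entirely controlled by the direction of the Schatten nesting already cited in the preliminaries.
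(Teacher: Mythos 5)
Your proof is correct and matches the paper's own argument exactly: the paper also deduces both containments from the Schatten nesting $\|S\|_{\cS^p}\leq\|S\|_{\cS^r}$ for $r\leq p$ and $\|\mathfrak{T}_m S\|_{\cS^s}\leq\|\mathfrak{T}_m S\|_{\cS^q}$ for $s\geq q$, applied inside the multiplier bound. You are also right that the displayed norm inequalities in the proposition are typographically reversed: taking suprema over the appropriate unit balls yields $\|m\|_{\mathfrak{M}_{r,q}}\leq\|m\|_{\mathfrak{M}_{p,q}}$ and $\|m\|_{\mathfrak{M}_{p,s}}\leq\|m\|_{\mathfrak{M}_{p,q}}$, which is the direction consistent with the stated set inclusions and with the monotone chain in Corollary~\ref{cor:embedding}.
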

This implies that $\mathcal{M}_{p,q}$ and $\mathfrak{M}_{p,q}$ cannot in general coincide whenever $q\geq p$ by simply considering the multiplier corresponding to the identity operator.

\begin{corollary}
Let $1\leq p< \infty$ and $q>p$. Then there exists $m\in L^\infty(\R^{2d})$ such that $m\in\mathfrak{M}_{p,q}$, but $m\notin \mathcal{M}_{p,q}$. In particular $\mathfrak{M}_{p,q}\neq \mathcal{M}_{p,q}$ for $q>p$.
\end{corollary}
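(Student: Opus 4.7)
The plan is to take the constant multiplier $m \equiv 1$ as the witness, exactly as suggested by the remark preceding the corollary. The point is that this multiplier corresponds on the operator side to the identity on $\cS^1$, since $\mathfrak{T}_1(T) = \F_W^{-1}(1 \cdot \F_W(T)) = T$, while on the classical side $T_1 \Psi = \F_\sigma(\F_\sigma(\Psi)) = \Psi$ because $\F_\sigma^2 = \mathrm{Id}_{L^2}$. So the corollary reduces to the asymmetry between nesting of Schatten classes and non-nesting of $L^p$ spaces on infinite measure.

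First I would verify $m \equiv 1 \in \mathfrak{M}_{p,q}$. Because singular values of a compact operator form a sequence and $\ell^p \subseteq \ell^q$ for $p \leq q$, one has the Schatten inclusion $\cS^p \subseteq \cS^q$ with $\|T\|_{\cS^q} \leq \|T\|_{\cS^p}$; this is exactly Proposition \ref{prop:nesting} specialised to the identity. Since $\cS^1$ is dense in $\cS^p$ for $p < \infty$, the identity map $\mathfrak{T}_1 \colon \cS^1 \to \cS^1 \subseteq \cS^q$ extends by density to a bounded linear map $\cS^p \to \cS^q$ of norm at most one, putting $1 \in \mathfrak{M}_{p,q}$.

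Second I would show $m \equiv 1 \notin \mathcal{M}_{p,q}$. Since $T_1$ agrees with the identity on $\sS(\R^{2d})$, membership in $\mathcal{M}_{p,q}$ would force the inclusion $L^p(\R^{2d}) \hookrightarrow L^q(\R^{2d})$ to be bounded. This fails on the infinite-measure space $\R^{2d}$ for $q > p$: choose $\alpha$ with $2d/q < \alpha < 2d/p$ and set $f(x) = |x|^{-\alpha}\chi_{B_1(0)}(x)$. A direct computation in polar coordinates shows $f \in L^p(\R^{2d}) \setminus L^q(\R^{2d})$, so the identity is unbounded from $L^p$ to $L^q$ and $1 \notin \mathcal{M}_{p,q}$.

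There is no substantive obstacle here; the argument is purely structural. The only thing to be slightly careful about is that for $p > 2$ the defining integral in \eqref{multiDef} need not converge a priori, but $\mathfrak{T}_1$ is unambiguously defined on a dense subspace (e.g., $\fS$, which is dense in $\cS^p$ for $p < \infty$ by Lemma \ref{lem:densityOfSchwartz}) and there coincides with the identity, so the density extension is genuinely producing the bounded operator $\mathrm{Id} \colon \cS^p \to \cS^q$. The asymmetry between discrete and continuous $L^p$ nesting is then the whole content.
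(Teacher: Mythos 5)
Your proof is correct and follows the same route as the paper: you take $m\equiv 1$, observe that $\mathfrak{T}_1$ is the identity so Schatten-class nesting gives $1\in\mathfrak{M}_{p,q}$, and note that $T_1$ is the identity on $L^p$, which fails to map into $L^q$ over infinite measure. The only cosmetic differences are that you exhibit the explicit witness $|x|^{-\alpha}\chi_{B_1(0)}$ rather than the paper's density argument (the paper gives a Gaussian family in the subsequent remark, playing an analogous role), and you appeal to density of $\fS$ to define $\mathfrak{T}_1$ on $\cS^p$ where the paper uses the tempered-operator pairing of Definition \ref{defDistFW} directly.
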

\begin{proof}
Consider $m\equiv 1\in L^\infty(\R^{2d})$. Then $\F_\sigma(1)=\delta_0$, and for any $T\in \cS^p$,
\[
\langle \mathfrak{T}_1(T), S\rangle_{\fS',\fS}=\langle \F_W(T),\F_W(S)\rangle_{\sS',\sS}=\langle T,S\rangle_{\fS',\fS}
\]
for every $S\in\fS$ by Definition \ref{defDistFW}. This shows that $T=\mathfrak{T}_1(T)$. In particular, for any $q>p$,
\[
\|\mathfrak{T}_1(T)\|_{\cS^q}=\|T\|_{\cS^q}\leq \|T\|_{\cS^p},
\]
and so $1\in \mathfrak{M}_{p,q}$. 

As $L^p(\R^{2d})$ and $L^q(\R^{2d})$ are not equal, nor comparable, it follows that there exists $F\in L^p(\R^{2d})$ such that $F\notin L^q(\R^{2d})$. Let $\{\Phi_n\}_{n=1}^\infty\subseteq \sS(\R^{2d})$ be a sequence converging to $F$ in $L^p(\R^{2d})$. By renormalising and choosing an adequate subsequence, we may assume that
\[
\|\Phi_n\|_{L^q}\geq n,\qquad \|\Phi_n\|_{L^p}\leq1.
\]
The multiplier $m\equiv 1$ corresponds to convolving with $\delta_0$, and thus is the identity operator on $\sS(\R^{2d})$. Thus for each $n\in\mathbb{N}$ it follows that,
\[
\|T_1(\Phi_n)\|_{L^q}=\|\Phi_n\|_{L^q}\geq n\geq n\|\Phi_n\|_{L^p}.
\]
This shows that $1\notin \mathcal{M}_{p,q}$.
\end{proof}
\begin{remark}
For a more concrete counter-example, we may consider an appropriate sequence of Gaussians. For each $n\in\mathbb{N}$ consider the function
\[
\Phi_n(z)=e^{-\pi n |z|^2}\in\sS(\R^{2d}).
\]
Then the $L^p$ norm is given by
\[
\|\Phi_{n}\|_{L^p}^p=\int_{\R^{2d}}e^{-\pi np |z|^2}\,dz=\left(\frac{1}{np}\right)^d.
\]
In particular, as $\Phi_n\in\sS(\R^{2d})$ it follows that $T_1(\Phi_n)=\Phi_n*\delta_0=\Phi_n$. We thus conclude that for each $n\in\mathbb{N}$,
\[
\|1\|_{\mathfrak{M}_{p,q}}\geq \frac{\|\Phi_n\|_{L^q}}{\|\Phi_n\|_{L^p}}=\frac{p^\frac{d}{p}}{q^{\frac{d}{q}}}n^{\frac{d}{p}-\frac{d}{q}},
\]
which shows that $1\notin\mathcal{M}_{p,q}$ as $1/p>1/q$.
\end{remark}

If we instead consider the case when $q<p$, then the Fourier-Wigner multipliers turn out to coincide with the symplectic Fourier multipliers. As is known from the classical literature (see for instance \cite{Grafakos_14}), these multipliers are all equal to the trivial multiplier.
\begin{prop}
$\mathfrak{M}_{p,q}=\{0\}$ whenever $1\leq q<p<\infty$.
\end{prop}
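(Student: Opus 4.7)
The strategy is Hörmander's classical argument (that translation-invariant operators $L^p \to L^q$ must be zero for $q<p$), adapted with the conjugation $\alpha_z$ in place of translation. The initial step is to verify the covariance $\mathfrak{T}_m \circ \alpha_z = \alpha_z \circ \mathfrak{T}_m$, which follows from $\F_W(\alpha_z T)(w) = e^{-2\pi i\sigma(w,z)}\F_W(T)(w)$ (a consequence of $\alpha_z T = \delta_z \star T$ together with $\F_W(F\star T) = \F_\sigma(F)\F_W(T)$).

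Suppose for contradiction $m \neq 0$ in $L^\infty(\R^{2d})$, and take the Gaussian test operator $T_0 = \varphi_0 \otimes \varphi_0 \in \cS^1$. Since $\F_W(\mathfrak{T}_m T_0) = m\cdot e^{-\pi|\cdot|^2/2}$ by \eqref{Ambi-Gauss}, the image $S := \mathfrak{T}_m T_0$ is a non-zero element of $\cS^q$. For each $N \in\mathbb{N}$, pick shifts $z_1,\dots,z_N \in \R^{2d}$ with $|z_k - z_l|\geq R$ for $k \neq l$, and set $A_N := \sum_{k=1}^N \alpha_{z_k}(T_0)$; covariance yields $\mathfrak{T}_m A_N = \sum_k \alpha_{z_k}(S)$, so boundedness gives
\[
\bigl\|\sum_{k=1}^N \alpha_{z_k}(S)\bigr\|_{\cS^q} \leq \|m\|_{\mathfrak{M}_{p,q}}\, \|A_N\|_{\cS^p}.
\]

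For the upper bound on the right, write $A_N = VV^*$ with $Ve_k := \rho(z_k)\varphi_0$; the non-zero eigenvalues of $A_N$ equal those of the $N\times N$ Gram matrix $G_{kl} = \langle \rho(z_l)\varphi_0, \rho(z_k)\varphi_0\rangle$, whose off-diagonal entries are bounded in modulus by $e^{-\pi R^2/2}$ thanks to \eqref{Ambi-Gauss}. For $R$ large, $G$ lies within $1/2$ of the identity, so $\|A_N\|_{\cS^p}\leq 2N^{1/p}$. A matching \emph{lower} bound $\bigl\|\sum_k \alpha_{z_k}(S)\bigr\|_{\cS^q} \geq c\, N^{1/q}\|S\|_{\cS^q}$ (for some universal $c>0$ and $R = R(N,S)$ large enough) then completes the proof: combining the two estimates with $1/p - 1/q < 0$ and sending $N\to\infty$ forces $\|S\|_{\cS^q} = 0$, a contradiction.

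The main obstacle is establishing this lower bound. I would first handle finite-rank $S$ with Schwartz components, writing $S = \sum_{j=1}^M s_j(S)\, e_j \otimes \eta_j$: the sum factors as $\sum_k \alpha_{z_k}(S) = W_e D W_\eta^*$, where $W_e, W_\eta : \mathbb{C}^{NM}\to L^2(\R^d)$ have columns $\rho(z_k)e_j$ and $\rho(z_k)\eta_j$ and $D$ is diagonal with entries $s_j(S)$ (each repeated $N$ times). The Schwartz decay of the ambiguity functions $\mathcal{A}(e_j,e_{j'})$ and $\mathcal{A}(\eta_j,\eta_{j'})$ ensures that $W_e^*W_e$ and $W_\eta^*W_\eta$ are within any prescribed distance of the identity for $R$ large, so the singular values of the product approach $\{s_j(S)\}_{j\leq M}$ each repeated $N$ times and $\bigl\|\sum_k \alpha_{z_k}(S)\bigr\|_{\cS^q} \to N^{1/q}\|S\|_{\cS^q}$. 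The general case then follows from density of such finite-rank operators in $\cS^q$ (Lemma \ref{lem:densityOfSchwartz}), choosing the approximation parameter small compared to $N^{1/q-1}$ so the remainder does not destroy the lower bound.
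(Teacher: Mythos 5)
Your argument is correct, but it is a genuinely different route from the paper's. The paper composes $\mathfrak{T}_m$ with convolution by the Gaussian rank-one operator on both sides: for $F\in L^p(\R^{2d})$, the map $F\mapsto \mathfrak{T}_m(F\star(\varphi_0\otimes\varphi_0))\star(\varphi_0\otimes\varphi_0)$ is a \emph{classical} symplectic Fourier multiplier from $L^p$ to $L^q$ with symbol $k(z)=m(z)e^{-\pi|z|^2}$, and then one simply invokes the classical H\"ormander theorem that $\mathcal{M}_{p,q}(\R^{2d})=\{0\}$ for $q<p$; since the Gaussian never vanishes, $m\equiv 0$. Your proof instead re-proves the H\"ormander phenomenon directly at the operator level, using $\alpha_z$ in place of translation and a Gram-matrix analysis of sums of well-separated conjugates. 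What the paper's approach buys is brevity and a clean reduction to a black-box scalar result; what yours buys is self-containedness and the payoff of seeing exactly why the mechanism works for Schatten classes. The Gram-matrix bookkeeping you sketch does go through: the upper bound $\|A_N\|_{\cS^p}\leq 2N^{1/p}$ follows from diagonal dominance of $G$ once $(N-1)e^{-\pi R^2/2}<1/2$, and for the lower bound, once $\|W_e^*W_e-I\|$ and $\|W_\eta^*W_\eta-I\|$ are $<1/2$, the left inverses $W_e^+$, $W_\eta^+$ yield $\|W_e D W_\eta^*\|_{\cS^q}\geq c\,\|D\|_{\cS^q}=c\,N^{1/q}\|S'\|_{\cS^q}$ with an absolute $c$. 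Two small points worth tightening when writing it up: (i) Lemma~\ref{lem:densityOfSchwartz} gives Schwartz-column finite-rank approximants that are only approximately orthonormal, so you should either Gram--Schmidt orthogonalise (the result is still Schwartz) or absorb the $O(\delta)$ deviation of the diagonal blocks into the same perturbation estimate; and (ii) $R$ must be chosen large depending on \emph{both} $N$ and the chosen approximant $S'$, but since both requirements are of the form ``$R$ large enough'' there is no conflict. With those clarifications your approach constitutes a valid alternative proof.
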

\begin{proof}
Assume $m\in\mathfrak{M}_{p,q}$ for $1\leq q<p<\infty$, and let $\varphi_0(t)=2^{d/4}\exp(-\pi|t|^2)$ denote the $L^2$ normalized Gaussian on $\R^d$. Then for any $F\in L^p(\R^{2d})$, it follows that
\begin{equation}\label{eq:classicalMultiEx}
F\mapsto \mathfrak{T}_m(F\star(\varphi_0\otimes \varphi_0))\star (\varphi_0\otimes \varphi_0)\in L^q(\R^{2d}),
\end{equation}
by Werner-Young's inequality. However, utilising Proposition \ref{ConvProp-Distribution} and \eqref{eq:FW_Ambi}, coupled with \eqref{Ambi-Gauss}, results in
\[
\F_\sigma\left(\mathfrak{T}_m(F\star(\varphi_0\otimes \varphi_0))\star (\varphi_0\otimes \varphi_0)\right)(z)=\F_\sigma(F)(z)\left(m(z)e^{-\pi \frac{|z|^2}{2}}e^{-\pi\frac{|z|^2}{2}}\right)=k(z)\F_\sigma(f)(z).
\]
This shows that the map given by \eqref{eq:classicalMultiEx}
is a symplectic Fourier multiplier from $L^p(\R^{2d})$ to $L^q(\R^{2d})$. Hence it follows that for every $z\in\R^{2d}$,
\[
m(z)e^{-\pi|z|^2}=k(z)= 0,
\]
by Theorem $2.5.6$ in \cite{Grafakos_14}. Since the Gaussian never vanishes, it follows that $m\equiv 0$.
\end{proof}

For $m\in L^\infty(\R^{2d})$ let $\widetilde{m}(z)=m(-z)$ denote the parity of $m$. We reserve the notation $P$ to denote the parity operator on $L^2(\R^d)$. Given an operator $T\in\cS^p$, we consider the adjoint of the operator $\mathfrak{T}_m(T)$. This leaves us with the following result.
\begin{lemma}\label{lem:adjoint}
Let $m\in\mathfrak{M}_{p,q}$. Then for any $T\in \cS^p$ the adjoint of $\mathfrak{T}_mT$ is given by $(\mathfrak{T}_mT)^*=\mathfrak{T}_{\widetilde{\overline{m}}}T^*$.
\end{lemma}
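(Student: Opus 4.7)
The plan is to verify the identity first on $\cS^1$, where the weak integral in \eqref{multiDef} converges absolutely as a bounded operator on $L^2(\R^d)$, and then extend to $\cS^p$ by density. For $T\in\cS^1$ and arbitrary $f,g\in L^2(\R^d)$, I would start from
\[
\langle (\mathfrak{T}_m T)^* f, g\rangle = \overline{\langle \mathfrak{T}_m T g, f\rangle} = \int_{\R^{2d}} \overline{m(z)}\,\overline{\F_W(T)(z)}\,\overline{\langle \rho(z) g, f\rangle}\, dz,
\]
and apply two symmetry identities. Since $\rho$ is a projective unitary representation with $\rho(z)^{-1}=\rho(-z)$, the conjugate pairing satisfies $\overline{\langle \rho(z)g,f\rangle}=\langle \rho(-z)f,g\rangle$. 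By cyclicity of the trace,
\[
\overline{\F_W(T)(z)} = \overline{\tr(T\rho(-z))} = \tr(\rho(z)T^*) = \tr(T^*\rho(z)) = \F_W(T^*)(-z).
\]
Substituting these into the integrand and performing the change of variables $z\mapsto -z$ converts it into $\widetilde{\overline{m}}(w)\F_W(T^*)(w)\langle \rho(w)f,g\rangle$, which is precisely the weak integrand defining $\langle \mathfrak{T}_{\widetilde{\overline{m}}} T^* f, g\rangle$.

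To pass from $\cS^1$ to general $T\in\cS^p$, I would use that $\cS^1\cap \cS^p$ is dense in $\cS^p$ for finite $p$ together with the fact that the adjoint is an isometric antilinear involution on every Schatten class. Choosing $T_n\in \cS^1\cap\cS^p$ with $T_n\to T$ in $\cS^p$, the hypothesis $m\in\mathfrak{M}_{p,q}$ gives $\mathfrak{T}_mT_n\to \mathfrak{T}_mT$ in $\cS^q$, hence $(\mathfrak{T}_mT_n)^*\to (\mathfrak{T}_mT)^*$ in $\cS^q$. The first step identifies each $(\mathfrak{T}_mT_n)^*$ with $\mathfrak{T}_{\widetilde{\overline{m}}}T_n^*$, and since $T_n^*\to T^*$ in $\cS^p$, this Cauchy sequence in $\cS^q$ pins down $\mathfrak{T}_{\widetilde{\overline{m}}}T^*$ as the common limit, yielding the identity on $\cS^p$. (For $p=\infty$ one replaces density with $w^*$-density via Lemma~\ref{lem:w*DenseSchwartz}.)

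The only point requiring care is the manipulation of the weak integral under complex conjugation and change of variables; this is routine once one pairs against $L^2$ vectors, since the scalar integrand is absolutely integrable by Cauchy-Schwarz together with Moyal's identity (as already exploited just before Proposition \ref{prop:ModulSpace}). Beyond this bookkeeping, the lemma is essentially a consequence of the unitarity of $\rho$ and the trace-adjoint identity for $\F_W$, and I do not anticipate any substantial obstacle.
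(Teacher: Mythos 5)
Your proposal is correct and takes essentially the same route as the paper: verify the identity on $\cS^1$ by pairing against $L^2$ vectors, using the unitarity of $\rho$ (so $\rho(z)^{*}=\rho(-z)$) together with the identity $\overline{\F_W(T)(z)}=\F_W(T^*)(-z)$, then a change of variables $z\mapsto -z$, and finally extend by density. You begin from $\langle(\mathfrak{T}_m T)^*f,g\rangle$ whereas the paper conjugates $\langle\mathfrak{T}_m T f,g\rangle$ directly, but the algebra is identical; you also make the density step explicit, which the paper compresses into the sentence ``It suffices to assume that $T\in\cS^1$.''
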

\begin{proof}
It suffices to assume that $T\in\cS^1$. For any $f,g\in \sS(\R^d)$ it then follows that
\begin{align*}
\langle \mathfrak{T}_m T f,g\rangle=\int_{\R^{2d}}m(z)\F_W(T)(z)\langle \rho(z) f,g\rangle \,dz
=&\, \overline{\int_{\R^{2d}}\overline{m(z)}\overline{\F_W(T)(z)}\langle \rho(-z)g,f\rangle\,dz}\\
=&\, \overline{\int_{\R^{2d}}\overline{m(z)}\F_W(T^*)(-z)\langle \rho(-z)g,f\rangle\,dz}\\
=&\, \langle f,\mathfrak{T}_{\widetilde{\overline{m}}}T^* g\rangle,
\end{align*}
where we used that $\F_W(T)(z)=\overline{\F_W(T^*)(-z)}$.
\end{proof}

\begin{remark}
This is reminiscent of the fact that
\[
\overline{T_mf}(x)=T_{\widetilde{\overline{m}}}\overline{f}(x),
\]
for classical Fourier multipliers.
\end{remark}

\begin{prop}\label{prop:Duality}
The spaces $\mathfrak{M}_{p,q}$ and $\mathfrak{M}_{q',p'}$ are isometric with the property that
\[
\|m\|_{\mathfrak{M}_{p,q}}=\|m\|_{\mathfrak{M}_{q',p'}}.
\]
\end{prop}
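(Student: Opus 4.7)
The plan is to combine a parity-symmetry argument with Banach-space duality applied to the bilinear trace pairing on Schatten classes; throughout, $\widetilde m(z) = m(-z)$ denotes the parity of $m$. In the first step I will establish the intertwining identity
\[
P\, \mathfrak{T}_m(T)\, P = \mathfrak{T}_{\widetilde m}(PTP), \qquad T \in \fS.
\]
A direct computation from \eqref{eq:SchRep} gives $P\rho(z)P = \rho(-z)$, whence
\[
\F_W(PTP)(z) = \tr(TP\rho(-z)P) = \tr(T\rho(z)) = \F_W(T)(-z).
\]
Combining this with $\F_W(\mathfrak{T}_m T) = m\,\F_W(T)$ and injectivity of $\F_W$ on $\fS'$ yields the displayed intertwining identity on $\fS$. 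Since $T \mapsto PTP$ is an isometry of every $\cS^r$, taking $\cS^p \to \cS^q$ norms then produces $\|m\|_{\mathfrak{M}_{p,q}} = \|\widetilde m\|_{\mathfrak{M}_{p,q}}$.

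In the second step I identify the Banach adjoint of $\mathfrak{T}_m$ via the bilinear trace pairing $\langle A, B\rangle := \tr(AB)$ on $\cS^p \times \cS^{p'}$. Starting from Moyal's identity together with the relation $\F_W(B^*)(z) = \overline{\F_W(B)(-z)}$ one obtains the Parseval-type formula
\[
\tr(AB) = \int_{\R^{2d}} \F_W(A)(z)\,\F_W(B)(-z)\,dz.
\]
Applied to $A = \mathfrak{T}_m(T)$ and $B = S$ for $T, S \in \fS$, and using that $\F_W(\mathfrak{T}_{\widetilde m}S)(-z) = \widetilde m(-z)\F_W(S)(-z) = m(z)\F_W(S)(-z)$, this gives
\[
\tr(\mathfrak{T}_m(T)\,S) = \int_{\R^{2d}} m(z)\F_W(T)(z)\F_W(S)(-z)\,dz = \tr(T\,\mathfrak{T}_{\widetilde m}(S)).
\]
Extending this adjoint identity by density from $\fS$ and invoking standard Banach duality between $\cS^p$ and $\cS^{p'}$ then yields $\|m\|_{\mathfrak{M}_{p,q}} = \|\widetilde m\|_{\mathfrak{M}_{q',p'}}$. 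Applying the parity symmetry from the first step with $(p,q)$ replaced by $(q',p')$ finally produces the claimed identity $\|m\|_{\mathfrak{M}_{p,q}} = \|m\|_{\mathfrak{M}_{q',p'}}$.

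The principal obstacle I anticipate is the endpoint case in which $p$ or $q$ equals $\infty$, since $(\cS^\infty)^*$ is strictly larger than $\cS^1$ and the direct Banach-duality step in the second step must be reinterpreted. There I will instead work with the predual pairing, using that $\cS^1$ is the predual of $\cS^\infty$ together with the $w^*$-density of $\fS$ in $\fS'$ provided by Lemma \ref{lem:w*DenseSchwartz} to extend the adjoint identity to all admissible operators. The remaining technical issues, namely the extension of the Parseval formula and of the parity intertwining relation from $\fS$ to general elements of the relevant Schatten classes, reduce to routine density arguments based on Lemma \ref{lem:densityOfSchwartz}.
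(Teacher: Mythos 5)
Your proposal is correct and pursues the same overall strategy as the paper — Banach duality on the Schatten trace pairing combined with a parity-intertwining argument — but it is a slightly more economical variant. The paper uses the \emph{sesquilinear} pairing $\langle A,B\rangle = \tr(AB^*)$, which identifies the adjoint of $\mathfrak{T}_m$ as $\mathfrak{T}_{\overline m}$ and yields $\|m\|_{\mathfrak{M}_{p,q}}=\|\overline m\|_{\mathfrak{M}_{q',p'}}$; removing the conjugate then requires invoking \emph{both} Lemma~\ref{lem:adjoint} (the Hilbert-space adjoint identity $(\mathfrak{T}_mT)^*=\mathfrak{T}_{\widetilde{\overline m}}T^*$) and the parity intertwining $P\mathfrak{T}_m(T)P=\mathfrak{T}_{\widetilde m}(PTP)$. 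You instead use the \emph{bilinear} pairing $\tr(AB)$, for which the Parseval-type relation $\tr(AB)=\int_{\R^{2d}}\F_W(A)(z)\F_W(B)(-z)\,dz$ (a direct consequence of Pool's theorem together with $\F_W(B^*)(z)=\overline{\F_W(B)(-z)}$) identifies the adjoint as $\mathfrak{T}_{\widetilde m}$, so only the parity intertwining is needed to finish — Lemma~\ref{lem:adjoint} is bypassed entirely. Your derivation of the intertwining via $\F_W(PTP)(z)=\F_W(T)(-z)$ and injectivity of $\F_W$ on $\fS'$ is correct and equivalent to the paper's direct computation, and your flagging of the endpoint cases $p=\infty$, $q=1$ (where $\cS^1$ is dense only in $\mathcal{K}$, not in $\cS^\infty$) matches the care the paper itself takes at that step; the predual/$w^*$-density repair you sketch is the right fix.
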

\begin{proof}
Let $S,T\in\cS^1\subseteq \cS^2$. Then
\[
\langle \mathfrak{T}_mS, T\rangle_{\cS^q,\cS^{q'}}=\int_{\R^{2d}}m\F_W(S)\overline{\F_W(T)}\,dz=\int_{\R^{2d}}\F_W(S)\overline{\overline{m}\F_W(T)}\,dz=\langle S,\mathfrak{T}_{\overline{m}}T\rangle_{\cS^{p},\cS^{p'}}.
\]
Since $\cS^1$ is dense in $\mathcal{K}$ and $\cS^p$ for $1<p< \infty$ we can, by the continuity of the dual pairing, extend the estimate to hold for any $S\in\cS^p$ and $T\in \cS^{q'}$, or $T\in\mathcal{K}$ for $q=1$. This shows that $m\in\mathfrak{M}_{p,q}$ if and only if $\overline{m}\in\mathfrak{M}_{q',p'}$ as
\[
\|m\|_{\mathfrak{M}_{p,q}}=\|\mathfrak{T}_m\|_{\cS^{p}\to\cS^q}=\|\mathfrak{T}_{\overline{m}}\|_{\cS^{q'}\to\cS^{p'}}=\|\overline{m}\|_{\mathfrak{M}_{q',p'}}.
\]

On the other hand, by Lemma \ref{lem:adjoint} it follows that if $m\in\mathfrak{M}_{p,q}$, then 
\[
\mathfrak{T}_{\widetilde{\overline{m}}}S^*=(\mathfrak{T}_{m} S)^*\in \cS^q,
\]
for every $S\in \cS^p$ as taking the adjoint leaves the singular values unchanged. 

We claim that $\widetilde{m}\in \mathfrak{M}_{p,q}$ if and only if $m\in\mathfrak{M}_{p,q}$. To show this, we note that $\rho(-z)=P\rho(z)P$, where $P$ is the parity operator on $L^2(\R^d)$.
It follows that for any operator $T\in\cS^1$,
\[
\mathfrak{T}_{\widetilde{m}}(PTP)=\int_{\R^{2d}}m(-z)\F_W(T)(-z)\rho(z)\,dz=\int_{\R^{2d}}m(z)\F_W(T)(z)P\rho(z)P\,dz=P\mathfrak{T}_{m}(T)P.
\]

Since the Schatten $p$-classes are operator ideals, it follows that $PTP\in\cS^p$ if and only if $T\in \cS^p$. Thus, for any operator $T\in\cS^1$, we have
\begin{align*}
\|\mathfrak{T}_{\widetilde{m}}(T)\|_{\cS^q}\leq \|P\|^2_{L^2\to L^2}\|\mathfrak{T}_m(PTP)\|_{\cS^q}
\leq&\, \|P\|^{2}_{L^2\to L^2}\|m\|_{\mathfrak{M}_{p,q}}\|PTP\|_{\cS^p}\\
\leq&\, \|P\|^{4}_{L^2\to L^2}\|m\|_{\mathfrak{M}_{p,q}}\|T\|_{\cS^p}.
\end{align*}
As $\cS^1$ is dense in $\cS^p$, we can extend $\mathfrak{T}_{\widetilde{m}}$ to a continuous operator from $\cS^p$ to $\cS^q$. This necessarily implies that
\[
\|\widetilde{m}\|_{\mathfrak{M}_{p,q}}\leq \|m\|_{\mathfrak{M}_{p,q}}<\infty,
\]
as $\|P\|_{L^2\to L^2}=1$, and thus $\widetilde{m}\in \mathfrak{M}_{p,q}$. In fact, by replacing $m$ by $\widetilde{m}$ we see that
\begin{equation}\label{eq:RemoveTilde}
\|\widetilde{m}\|_{\mathfrak{M}_{p,q}}= \|m\|_{\mathfrak{M}_{p,q}}.
\end{equation}
For $m\in\mathfrak{M}_{p,q}$ and any $T\in\cS^{p'}$ it then follows that
\[
\|\mathfrak{T}_{m}(T)\|_{\cS^{q'}}=\|(\mathfrak{T}_{m}(T))^*\|_{\cS^{q'}}
=\|\mathfrak{T}_{\widetilde{\overline{m}}}(T^*)\|_{\cS^{q'}}\leq \|\widetilde{\overline{m}}\|_{\mathfrak{M}_{q',p'}}\|T^*\|_{\cS^{p'}}=\|\overline{m}\|_{\mathfrak{M}_{q',p'}}\|T\|_{\cS^{p'}},
\]
where we used \eqref{eq:RemoveTilde} and the fact that singular values are invariant under taking adjoints. This implies that
\[
\|m\|_{\mathfrak{M}_{q',p'}}\leq\|\overline{m}\|_{\mathfrak{M}_{q',p'}}=\|m\|_{\mathfrak{M}_{p,q}}.
\]
For the other direction, we have for any $T\in \cS^{p'}$,
\[
\|\mathfrak{T}_{\overline{m}}(T)\|_{\cS^{q'}}=\|(\mathfrak{T}_{\overline{m}}(T))^*\|_{\cS^{q'}}=\|\mathfrak{T}_{\widetilde{m}}(T^*)\|_{\cS^{q'}}\leq \|\widetilde{m}\|_{\mathfrak{M}_{q',p'}}\|T^*\|_{\cS^{p'}}=\|m\|_{\mathfrak{M}_{q',p'}}\|T\|_{\cS^{p'}}.
\]
Thus, by the definition of the Fourier-Wigner multiplier norm we have
\[
\|m\|_{\mathfrak{M}_{p,q}}=\|\overline{m}\|_{\mathfrak{M}_{q',p'}}\leq
\|m\|_{\mathfrak{M}_{q',p'}},
\]
which gives equality and finishes the proof.
\end{proof}

\begin{corollary}\label{cor:embedding}
Let $1\leq p<q\leq 2$. We have the inclusions
\[
\mathfrak{M}_1\subseteq \mathfrak{M}_p \subseteq \mathfrak{M}_q\subseteq \mathfrak{M}_2\cong L^\infty(\R^{2d}),
\]
with the estimate
\[
\|m\|_{\mathfrak{M}_{2}}\leq \|m\|_{\mathfrak{M}_{q}} \leq \|m\|_{\mathfrak{M}_{p}}\leq \|m\|_{\mathfrak{M}_{1}}.
\]
\end{corollary}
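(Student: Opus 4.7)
The plan is to combine Proposition \ref{prop:Duality} with complex interpolation on the Schatten scale. Specialising Proposition \ref{prop:Duality} to $q = p$ gives the key identity $\|m\|_{\mathfrak{M}_p} = \|m\|_{\mathfrak{M}_{p'}}$ for every $1 \leq p \leq \infty$. Consequently, as soon as $m \in \mathfrak{M}_p$ for some $1 \leq p \leq 2$, the operator $\mathfrak{T}_m$ automatically extends boundedly on both $\cS^p$ and $\cS^{p'}$, with operator norm exactly $\|m\|_{\mathfrak{M}_p}$ in each case. This self-dual symmetry is what drives the argument.

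Next, I would fix $1 \leq p < q \leq 2$ and observe that, because $p \leq q \leq 2 \leq p'$, there is a unique $\theta \in (0, 1]$ with
\[
\frac{1}{q} = \frac{1-\theta}{p} + \frac{\theta}{p'}.
\]
Applying the Riesz-Thorin complex interpolation theorem for Schatten classes (a standard result, see for example \cite[Theorem 2.9]{SimonTrace}) to $\mathfrak{T}_m$ viewed as a linear map between the endpoint pairs $(\cS^p, \cS^p)$ and $(\cS^{p'}, \cS^{p'})$ will yield
\[
\|m\|_{\mathfrak{M}_q} = \|\mathfrak{T}_m\|_{\cS^q \to \cS^q} \leq \|\mathfrak{T}_m\|_{\cS^p \to \cS^p}^{1-\theta} \|\mathfrak{T}_m\|_{\cS^{p'} \to \cS^{p'}}^{\theta} = \|m\|_{\mathfrak{M}_p}.
\]
Specialising to $q = 2$ and invoking Proposition \ref{2-multipliers} delivers $\|m\|_{\mathfrak{M}_2} = \|m\|_{L^\infty} \leq \|m\|_{\mathfrak{M}_p}$. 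Chaining these estimates as $p$ ranges over the interval $[1, 2]$ produces the whole nested family of inclusions and norm estimates in the statement.

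The only genuine subtlety is making sure Riesz-Thorin is legitimately applicable in this setting: one needs $\mathfrak{T}_m$ to be a single linear map defined on a common dense subspace of the two endpoint Schatten classes. I would take $\cS^1$ as that common subspace, since $\mathfrak{T}_m$ is defined unambiguously on $\cS^1$ via \eqref{multiDef} and $\cS^1$ is dense in $\cS^r$ for every $1 \leq r < \infty$. The awkward endpoint is $p = 1$, where $p' = \infty$; there the relevant extension of $\mathfrak{T}_m$ to $\cS^\infty$ is provided by Lemma \ref{lem:adjoint}, since taking adjoints preserves the multiplier structure up to the symmetry $m \mapsto \widetilde{\overline{m}}$ whose norm we already know how to handle. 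Beyond this minor technicality the argument is routine.
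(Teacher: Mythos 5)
Your argument is correct and essentially identical to the paper's: both reduce to the duality $\|m\|_{\mathfrak{M}_p}=\|m\|_{\mathfrak{M}_{p'}}$ from Proposition \ref{prop:Duality} and then interpolate between $\cS^p$ and $\cS^{p'}$ to reach the intermediate exponent $q$. In fact your appeal to Riesz--Thorin complex interpolation for Schatten classes is the cleaner choice, since it delivers the constant-one bound directly, whereas the paper cites the Hunt--Marcinkiewicz theorem (which as stated carries a constant $C(\theta)$) yet writes out precisely the Riesz--Thorin-type inequality; your care about the $p=1$, $p'=\infty$ endpoint is also a welcome precision.
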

\begin{proof}
Let $m\in\mathfrak{M}_p$, and thus $m\in\mathfrak{M}_{p'}$ by Proposition \ref{prop:Duality}. Thus, by Riesz-Thorin interpolation, Theorem \ref{thm:RTInter}, it follows that
\[
\|m\|_{\mathfrak{M}_q}=\|\mathfrak{T}_m\|_{\cS^q\to\cS^q}\leq \|\mathfrak{T}_m\|_{\cS^p\to\cS^p}^{1-\theta}\|\mathfrak{T}_m\|_{\cS^{p'}\to\cS^{p'}}^{\theta}=\|\mathfrak{T}_m\|_{\cS^p\to\cS^p}=\|m\|_{\mathfrak{M}_p},
\]
for any $1\leq p<q\leq 2$.
\end{proof}

\begin{prop}
$\mathfrak{M}_{p}$ is a Banach algebra for each $1\leq p \leq \infty$.
\end{prop}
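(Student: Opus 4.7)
The plan is to realize $\mathfrak{M}_p$ as the isometric image of a closed subalgebra of $\mathcal{L}(\cS^p)$ under the map $m \mapsto \mathfrak{T}_m$, where the pointwise product of symbols corresponds to composition of operators. This reduces the claim to two tasks: verifying the multiplicative structure, and establishing completeness.

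First I would verify the multiplicative identity $\mathfrak{T}_{mn} = \mathfrak{T}_m \circ \mathfrak{T}_n$ for $m, n \in \mathfrak{M}_p$. On the dense subclass $\cS^1$, where $\F_W(T) \in L^2$ is a genuine function, the key observation $\F_W(\mathfrak{T}_n T)(z) = n(z)\F_W(T)(z)$ follows by unwinding definitions, and substituting this into the defining integral for $\mathfrak{T}_m$ gives the identity. The identity then extends to all of $\cS^p$ by density for $p < \infty$ and by Proposition \ref{prop:Duality} for $p = \infty$. Submultiplicativity $\|mn\|_{\mathfrak{M}_p} \leq \|m\|_{\mathfrak{M}_p}\|n\|_{\mathfrak{M}_p}$ is then immediate from composition of bounded operators, and associativity is inherited from pointwise multiplication in $L^\infty$.

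Next I would establish completeness. Combining Corollary \ref{cor:embedding} for $p \leq 2$ with Proposition \ref{prop:Duality} (which gives $\mathfrak{M}_p \cong \mathfrak{M}_{p'}$ isometrically) for $p > 2$, I obtain the continuous embedding $\|m\|_{L^\infty} \leq \|m\|_{\mathfrak{M}_p}$ valid for every $1 \leq p \leq \infty$. A Cauchy sequence $\{m_n\} \subseteq \mathfrak{M}_p$ is therefore Cauchy in $L^\infty$ and converges to some $m \in L^\infty$, while $\{\mathfrak{T}_{m_n}\}$ is Cauchy in the Banach space $\mathcal{L}(\cS^p)$ and converges to some operator $\mathfrak{T}$.

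The heart of the argument is the identification $\mathfrak{T} = \mathfrak{T}_m$. For $T \in \cS^1$, the $L^\infty$-convergence $m_n \to m$ together with $\F_W(T) \in L^2$ gives $m_n \F_W(T) \to m \F_W(T)$ in $L^2$, hence $\mathfrak{T}_{m_n}(T) \to \mathfrak{T}_m(T)$ in $\cS^2$ by the Pool isomorphism. Simultaneously $\mathfrak{T}_{m_n}(T) \to \mathfrak{T}(T)$ in $\cS^p$. Both convergences imply convergence in the operator norm on $L^2$, forcing $\mathfrak{T}(T) = \mathfrak{T}_m(T)$ for every $T \in \cS^1$. Density of $\cS^1$ in $\cS^p$ for $p < \infty$ promotes this to $\mathfrak{T} = \mathfrak{T}_m$ on all of $\cS^p$, so $m \in \mathfrak{M}_p$ and $m_n \to m$ in the multiplier norm; the case $p = \infty$ is handled by reducing to $p = 1$ via Proposition \ref{prop:Duality}. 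The main obstacle I foresee is precisely this identification step at the endpoint $p = \infty$, where $\cS^1$ fails to be norm-dense in $\mathcal{L}(L^2)$; the duality relation $\mathfrak{M}_\infty \cong \mathfrak{M}_1$ is what circumvents that difficulty cleanly.
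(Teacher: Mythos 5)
Your proposal is correct and follows essentially the same route as the paper: establish the algebra structure via $\mathfrak{T}_{mn}=\mathfrak{T}_m\circ\mathfrak{T}_n$ on a dense subclass, use the embedding $\|m\|_{L^\infty}\leq\|m\|_{\mathfrak{M}_p}$ (via Corollary \ref{cor:embedding} and Proposition \ref{prop:Duality}) to extract an $L^\infty$ limit from a Cauchy sequence, and identify the $\mathcal{L}(\cS^p)$-limit operator with $\mathfrak{T}_m$. The only cosmetic difference is at the identification step: you argue via $\cS^2$-convergence and density of $\cS^1$, whereas the paper argues via dominated convergence of the weak integral for $T\in\cS^p\subseteq\cS^2$ followed by a Fatou estimate on singular values; both are sound and interchangeable.
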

\begin{proof}
The vector space structure follows directly from the linearity of the Fourier-Wigner transform. Moreover, for each $m_1,m_2\in\mathfrak{M}_p$, it follows for any $T\in\cS^p\subseteq \fS'$ that the operator $\mathfrak{T}_{m_1m_2}$ satisfies
\begin{align*}
\langle \mathfrak{T}_{m_1m_2}(T),S\rangle_{\fS',\fS}=\langle m_1m_2\F_W(T),\F_W(S)\rangle_{\sS',\sS}
=&\,\langle m_1\F_W(\mathfrak{T}_{m_2}(T)),\F_W(S)\rangle_{\sS',\sS}\\
=&\,\langle \mathfrak{T}_{m_1}(\mathfrak{T}_{m_2}(T)),S\rangle_{\fS',\fS},
\end{align*}
for every $S\in\fS$. This shows that $\mathfrak{T}_{m_1m_2}$ is the composition of $\mathfrak{T}_{m_1}$ and $\mathfrak{T}_{m_2}$, and thus $m_1m_2\in\mathfrak{M}_{p}$ whenever $m_1,m_2\in\mathfrak{M}_p$.

By Proposition \ref{prop:Duality} we may assume that $1\leq p\leq 2$.
To show that $\mathfrak{M}_p$ is complete, we note that we can identify $\mathfrak{M}_{p}$ as a subspace of $\mathcal{L}(\cS^p)$. Thus, it suffices to show that $\mathfrak{M}_{p}$ is closed.
Let $\{m_j\}_{j\in\mathbb{N}}$ be a Cauchy sequence in $\mathfrak{M}_{p}$. By the completeness of $\mathcal{L}(\cS^p)$, there exists $\mathfrak{T}\in \mathcal{L}(\cS^p)$ such that $\mathfrak{T}_{m_j}$ converges to $\mathfrak{T}$ with respect to the operator norm.
By Corollary \ref{cor:embedding}, the sequence is also a Cauchy sequence in $L^\infty(\R^{2d})$, and thus there exists $m\in L^\infty(\R^{2d})$ such that $m_j\to m$ in $L^\infty(\R^{2d})$.

For any $T\in \cS^p\subseteq \cS^2$ and any $f,g\in L^2(\R^d)$, it follows by Cauchy-Schwarz' inequality that for each $j\in\mathbb{N}$,
\[
\left|\int_{\R^{2d}}m_j(z)\F_W(T)(z)\mathcal{A}(f,g)(z)\,dz\right|
\leq \sup_{n\in\mathbb{N}}\|m_n\|_{L^\infty}\|\F_W(T)\|_{L^2(\R^{2d})}\|\mathcal{A}(f,g)\|_{L^2(\R^{2d})}<\infty.
\]
The dominated convergence theorem then gives,
\begin{align*}
\langle \mathfrak{T}(T) f,g\rangle
 =&\,\lim_{n\to\infty}\int_{\R^{2d}}m_n(z)\F_W(T)(z)\mathcal{A}(f,g)(z)\,dz\\
=&\,\int_{\R^{2d}}m(z)\F_W(T)(z)\mathcal{A}(f,g)(z)\,dz
=\langle \mathfrak{T}_m(T) f,g\rangle,
\end{align*}
for any $f,g\in L^2(\R^d)$, which shows that $\mathfrak{T}=\mathfrak{T}_m$. The expression on the right hand side is well-defined as $T\in\cS^p\subseteq \cS^2$ and $\mathfrak{T}_m:\cS^2\to\cS^2$ continuously whenever $m\in L^\infty(\R^{2d})$ by Theorem \ref{2-multipliers}.

It only remains to show that $m\in\mathfrak{M}_{p}$. Let $S\in \cS^1$ be any trace class operator. Then for any fixed $T\in \cS^p\subseteq \cS^2$,
\[
\left|\langle \mathfrak{T}_m(T)-\mathfrak{T}_{m_j}(T),S\rangle\right|\leq \|m-m_j\|_{L^\infty}\|T\|_{\cS^2}\|S\|_{\cS^2}\leq \|m-m_j\|_{L^\infty}\|T\|_{\cS^p}\|S\|_{\cS^1}\xrightarrow{j\to\infty}0.
\]
However, this implies that $\|\mathfrak{T}_m(T)-\mathfrak{T}_{m_j}(T)\|_{L^2\to L^2}$ converges to zero, and thus $\mathfrak{T}_m(T)$ is a compact operator where the singular values  satisfy $s_n(\mathfrak{T}_{m_j}(T))\to s_n(\mathfrak{T}_{m}(T))$ for each $n$ as $j\to\infty$. By Fatou's lemma we have
\begin{align*}
\|\mathfrak{T}_m(T)\|_{\cS^p}^p
=\sum_{n=1}^\infty |s_n(\mathfrak{T}_{m}(T))|^p
\leq \liminf_{j\to\infty}\sum_{n=1}^\infty |s_n(\mathfrak{T}_{m_j}(T))|^p
=&\,\liminf_{j\to\infty}\|\mathfrak{T}_{m_j}(T)\|_{\cS^p}^p\\
\leq&\, \liminf_{j\to\infty}\|m_j\|_{\mathfrak{M}_{p}}^p\|T\|_{\cS^p}^p.
\end{align*}
Since $\{m_j\}_{j\in\mathbb{N}}$ is a Cauchy sequence in $\mathfrak{M}_{p}$, it follows that
\[
\liminf_{j\to\infty}\|m_j\|_{\mathfrak{M}_{p}}\leq \sup_{n\in\mathbb{N}}\|m_n\|_{\mathfrak{M}_{p}}<\infty,
\]
which shows that
\[
\|m\|_{\mathfrak{M}_{p}}=\sup_{\|T\|_{\cS^p}=1}\|\mathfrak{T}_m(T)\|_{\cS^p}\leq \liminf_{j\to\infty}\|m_j\|_{\mathfrak{M}_{p}}<\infty,
\]
and so $m\in\mathfrak{M}_{p}$.
\end{proof}

We mention the following boundedness result for multipliers in Lorentz spaces, originally proved by Ruzhansky and Tulenov in the setting of noncommutative Euclidean spaces \cite{Ruzhansky2024multipliers}.  The proof is added for completeness.
\begin{theorem}[\cite{Ruzhansky2024multipliers}, Theorem. $3.3$]
Let $1<p\leq 2\leq q<\infty$ and suppose $m\in L^{r,\infty}(\R^{2d})$ for $r^{-1}=p^{-1}-q^{-1}$. Then $m\in \mathfrak{M}_{p,q}$ with $\|m\|_{\mathfrak{M}_{p,q}}\leq C\|m\|_{L^{r,\infty}}$ for some $C=C(p,q)>0$.
\end{theorem}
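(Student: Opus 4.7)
The plan is to establish the bound by chaining three ingredients already in the paper: the quantum Hausdorff--Young inequality (Proposition \ref{prop:QHY}), the O'Neil H\"{o}lder inequality for Lorentz spaces, and the reverse quantum Hausdorff--Young inequality. Writing $\mathfrak{T}_m(T)=\F_W^{-1}(m\F_W(T))$, the strategy is simply to pass to the Lorentz scale on $\R^{2d}$ via $\F_W$, multiply by $m$ there, and invert $\F_W$ to return to the Schatten scale.

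First I would record the index arithmetic. For $1<p\le 2\le q<\infty$ and $r^{-1}=p^{-1}-q^{-1}$,
$$\tfrac{1}{r}+\tfrac{1}{p'}=\Bigl(\tfrac{1}{p}-\tfrac{1}{q}\Bigr)+\Bigl(1-\tfrac{1}{p}\Bigr)=1-\tfrac{1}{q}<1,$$
so the Lorentz H\"{o}lder first-index sum is $1/q'$, and the second-index condition $1/p\le 1/\infty+1/p$ is automatic, allowing the second index $p$ to be preserved. Moreover $q'\in[1,2]$ since $q\ge 2$, which matches the hypothesis of the reverse Hausdorff--Young inequality.

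The three steps are then: (i) apply Proposition \ref{prop:QHY} to obtain $\|\F_W(T)\|_{L^{p',p}}\le C\|T\|_{\cS^p}$; (ii) apply Lorentz H\"{o}lder to $m\in L^{r,\infty}$ and $\F_W(T)\in L^{p',p}$ to get
$$\|m\F_W(T)\|_{L^{q',p}}\le C\|m\|_{L^{r,\infty}}\|\F_W(T)\|_{L^{p',p}};$$
(iii) use the nesting $L^{q',p}\hookrightarrow L^{q',q}$, valid since $p\le 2\le q$, followed by the reverse quantum Hausdorff--Young inequality with exponent $q'\in[1,2]$ (whose H\"{o}lder conjugate is $q$) to conclude
$$\|\mathfrak{T}_m(T)\|_{\cS^q}=\|\F_W^{-1}(m\F_W(T))\|_{\cS^q}\le C\|m\F_W(T)\|_{L^{q',q}}\le C\|m\F_W(T)\|_{L^{q',p}}.$$
Composing the three estimates proves the bound for $T$ in a dense subspace, and a standard density argument extends $\mathfrak{T}_m$ to all of $\cS^p$.

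I do not anticipate any substantial obstacle; the whole argument reduces to lining up Lorentz indices and checking that the strict inequality $1/r+1/p'<1$ needed for H\"{o}lder and the condition $q'\in[1,2]$ needed for reverse Hausdorff--Young both hold throughout the prescribed range $1<p\le 2\le q<\infty$. The point worth emphasising is that it is crucial to use the Lorentz-norm refinement of Hausdorff--Young from Proposition \ref{prop:QHY}, rather than the coarser $L^{p'}$ form, since otherwise the weak-type hypothesis $m\in L^{r,\infty}$ could not be absorbed via the Lorentz H\"{o}lder inequality.
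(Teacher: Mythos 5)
Your proposal is correct and follows essentially the same chain of estimates as the paper's proof: reverse quantum Hausdorff--Young, Lorentz H\"{o}lder with $m\in L^{r,\infty}$, nesting of Lorentz second indices, and quantum Hausdorff--Young, in that order. The only (immaterial) difference is that the paper applies H\"{o}lder before the nesting step, landing on $\|\F_W(T)\|_{L^{p',q}}\le\|\F_W(T)\|_{L^{p',p}}$, whereas you nest first via $\|m\F_W(T)\|_{L^{q',q}}\le\|m\F_W(T)\|_{L^{q',p}}$ and then apply H\"{o}lder; both orderings are valid since $p\le 2\le q$, and the index bookkeeping you record is exactly what the paper relies on.
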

\begin{proof}
The proof follows by a straightforward calculation. By the reverse quantum Hausdorff-Young's inequality, followed by H\"{o}lder's inequality for Lorentz spaces, it follows that for any $T\in \cS^p$
\begin{align*}
\|\mathfrak{T}_mT\|_{\cS^q}
\leq C\|m\F_W(T)\|_{L^{q',q}}
\leq C\|m\|_{L^{r,\infty}}\|\F_W(T)\|_{L^{p',q}}
\leq&\,C\, \|m\|_{L^{r,\infty}}\|\F_W(T)\|_{L^{p',p}}\\
\leq&\, C\|m\|_{L^{r,\infty}}\|T\|_{\cS^{p}}.
\end{align*}
\end{proof}

\section{Proof of Theorem \ref{thm:MainThm}}
We are now ready to present the proof of Theorem \ref{thm:MainThm}. The proof builds on Lemma \ref{LW-bnd-cor}, which allows us to bound the $\cS^p$ norm of an operator with compact Fourier-Wigner support in terms of the $L^p$ norm of the convolution with a rank-one operator. Using that Fourier-Wigner multipliers commutes with convolutions then gives that any classical $L^p$ multiplier with compact Fourier support is also a Fourier-Wigner multiplier on $\cS^p$. The reverse implication is proved in a similar fashion.
\begin{proof}[Proof of Theorem \ref{thm:MainThm}]
Let $m\in L^\infty(\R^{2d})$ be compactly supported with $\text{supp}(m)\subseteq \Omega$ for some compact set $\Omega$.

Let $1\leq p\leq \infty$ and assume $m\in \mathcal{M}_{p,q}$. For $T\in \cS^p$ note that
\[
\F_W(\mathfrak{T}_m(T))=m\F_W(T),
\]
is supported on $\Omega$. Thus, by Lemma \ref{LW-bnd-cor} it follows that there exist $L^2$-normalised $f,g\in \sS(\R^d)$ and $C=C(\Omega)>0$ such that
\[
\|\mathfrak{T}_m(T)\|_{\cS^q}\leq C(\Omega)\,\|\mathfrak{T}_m(T)\star (f\otimes g)\|_{L^q(\R^{2d})}.
\]
By Lemma \ref{lem:Commutation} and the fact that $m\in \mathcal{M}_p$, we also have
\[
\|\mathfrak{T}_m(T)\star (f\otimes g)\|_{L^q(\R^{2d})}=\|T_m(T\star (f\otimes g))\|_{L^q(\R^{2d})}\leq \|m\|_{\mathcal{M}_{p,q}}\|T\star(f\otimes g)\|_{L^p(\R^{2d})}.
\]
Combining these estimates with Werner-Young's inequality results in
\[
\|\mathfrak{T}_m(T)\|_{\cS^q}\leq C(\Omega)\|m\|_{\mathcal{M}_{p,q}}\|T\|_{S^p},
\]
which shows that $m\in\mathfrak{M}_{p,q}$.

Assume that $m\in \mathfrak{M}_{p,q}$. Then for any $F\in L^p(\R^{2d})$, it follows by Lemma \ref{lem:Commutation} and Lemma \ref{LW-bnd-cor}  that
\begin{align*}
\|T_m(F)\|_{L^q(\R^{2d})}\leq C(\Omega)\|\mathfrak{T}_m(F\star (f\otimes g))\|_{\cS^q}
\leq&\, C(\Omega)\|m\|_{\mathfrak{M}_{p,q}}\|F\star(f\otimes g)\|_{\cS^p}\\
\leq&\, C(\Omega)\|m\|_{\mathfrak{M}_{p,q}}\|F\|_{L^p(\R^{2d})},
\end{align*}
where the last inequality follows from Werner-Young's inequality and the fact that $f$ and $g$ are $L^2$ normalised. This shows that $m\in \mathcal{M}_{p,q}$. 
\end{proof}

\begin{remark}
Similarly to the results in \cite{Luef_Samuelsen_24,Samuelsen_24}, the proof of Theorem \ref{thm:MainThm} shows that for a compactly supported function $m\in L^{\infty}(\R^{2d})$, there exists a constant $C\geq 1$ such that
\[
\frac{1}{C}\|m\|_{\mathfrak{M}_{p,q}}\leq \|m\|_{\mathcal{M}_{p,q}}\leq C\|m\|_{\mathfrak{M}_{p,q}},
\]
where the constant $C$ depends only on the support of $m$. 
\end{remark}

\begin{remark}
Note that Theorem \ref{thm:MainThm} necessarily implies that any compactly supported Fourier multiplier from $L^p(\R^{2d})$ to $L^{q}(\R^{2d})$ is also a Fourier multiplier from $L^p(\R^{2d})$ to $L^s(\R^{2d})$ for any $s\geq q$. This is a consequence of Proposition \ref{prop:nesting}. However, this can also be shown classically using the Bernstein inequality \cite[Chapter $5$]{Wolff}. Namely, if $m$ is supported on a ball of radius $R$, then $m\F_\sigma(f)$ is also supported on the same ball. Thus, for any $s\geq q$, Bernstein's inequality gives
\[
\|T_mf\|_{L^{s}(\R^{2d})}\leq CR^{2d\left(\frac{1}{q}-\frac{1}{s}\right)}\|T_mf\|_{L^q(\R^{2d})}\leq CR^{2d\left(\frac{1}{q}-\frac{1}{s}\right)}\|m\|_{\mathcal{M}_{p,q}}\|f\|_{L^p(\R^{2d})}.
\]

Perhaps more surprising is the fact that for $r\leq p$, it follows that any compactly supported multiplier from $L^p(\R^{2d})$ to $L^q(\R^{2d})$ must also be a Fourier multiplier from $L^r(\R^{2d})$ to $L^q(\R^{2d})$. For the operator case, the Schatten classes are nested, but this is not the case for general $L^p$ spaces. However, the mapping properties are known by \cite[Corollary $1.8$]{Hormander_60}.
Namely, if $m\in\mathcal{M}_{p,q}$, then for $\varphi\in\sS(\R^{2d})$ it follows that $m\varphi\in\mathcal{M}_{r,q}$ for every $r\leq p$. Since any compactly supported function $m$ can be written as $m=m\chi$, where $\chi\in C_c^{\infty}$ is a smooth cut-off function equal to $1$ on the support of $m$, the result follows.
\end{remark}

\section{Trace class multipliers}
We end with a discussion on trace class multipliers. Unlike the Hilbert-Schmidt multipliers, we are unable to present a characterisation of the trace class multipliers.
A natural characterisation, if there is one, would be the Fourier transform of complex Radon measures as in classical $L^1$ theory. It is true that the symplectic Fourier transform of any complex Radon measure necessarily gives a trace class multiplier. This follows by an extension of Young's inequality for measures. It is shown in \cite{Feichtinger_Halvdansson_Luef_24} that
\[
\|T\star\mu\|_{\cS^1}\leq \|T\|_{\cS^1}\|\mu\|_{\mathscr{M}},
\]
for any complex Radon measure, and thus $\mathcal{M}_1\subseteq \mathfrak{M}_1$. Moreover, by Proposition \ref{prop:ModulSpace} it follows that $\F_\sigma(\mathfrak{M}_1)\subseteq M^{1,\infty}(\R^{2d})$. To check if a function is a trace class multipliers, it is enough to only consider rank-one operators.
\begin{lemma}\label{lem:TraceOnRank1}
Let $m\in L^\infty(\R^d)$. Then $m\in \mathfrak{M}_1$ if and only if there exists $C>0$ such that
\[
\|\mathcal{A}_{\F_\sigma(m)}^{f,g}\|_{\cS^1}\leq C\|f\|_{L^2}\|g\|_{L^2},
\]
for all $f,g\in L^2(\R^d)$.
\end{lemma}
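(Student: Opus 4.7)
The plan is to handle the two implications separately, with the forward direction being immediate and the reverse direction relying on the singular value decomposition together with a Fatou-type limit argument.

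For the forward implication, I would simply observe that the rank-one operator $g\otimes f$ is trace class with $\|g\otimes f\|_{\cS^1}=\|f\|_{L^2}\|g\|_{L^2}$, and that by the computation preceding Theorem \ref{thm:NessSchattenCond} we have the identification $\mathfrak{T}_m(g\otimes f)=\F_\sigma(m)\star(g\otimes f)=\mathcal{A}_{\F_\sigma(m)}^{f,g}$. If $m\in\mathfrak{M}_1$, applying the bound $\|\mathfrak{T}_m\|_{\cS^1\to\cS^1}\leq \|m\|_{\mathfrak{M}_1}$ to $T=g\otimes f$ immediately yields the stated estimate with $C=\|m\|_{\mathfrak{M}_1}$.

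For the reverse implication, suppose the rank-one estimate holds with constant $C$. Given $T\in\cS^1$, I would expand $T=\sum_n s_n(T)\,e_n\otimes \eta_n$ via the singular value decomposition and consider the finite-rank truncations $T_N=\sum_{n=1}^N s_n(T)\,e_n\otimes \eta_n$. Linearity of $\mathfrak{T}_m$ combined with the triangle inequality in $\cS^1$ and the hypothesis applied to each rank-one summand gives
\[
\|\mathfrak{T}_m(T_N)\|_{\cS^1}\leq \sum_{n=1}^N s_n(T)\,\|\mathcal{A}_{\F_\sigma(m)}^{\eta_n,e_n}\|_{\cS^1}\leq C\sum_{n=1}^N s_n(T)\leq C\|T\|_{\cS^1},
\]
since $\|e_n\|_{L^2}=\|\eta_n\|_{L^2}=1$.

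To pass this bound to $T$ itself, I would use that $T_N\to T$ in $\cS^1\hookrightarrow \cS^2$, so by Proposition \ref{2-multipliers} ($\mathfrak{T}_m$ being bounded on $\cS^2$ for every $m\in L^\infty$) one has $\mathfrak{T}_m(T_N)\to \mathfrak{T}_m(T)$ in $\cS^2$, hence also in operator norm. The Weyl-type inequality $|s_n(A)-s_n(B)|\leq \|A-B\|_{\mathrm{op}}$ then yields pointwise convergence $s_n(\mathfrak{T}_m(T_N))\to s_n(\mathfrak{T}_m(T))$ for each $n$, and Fatou's lemma on $\mathbb{N}$ gives
\[
\|\mathfrak{T}_m(T)\|_{\cS^1}=\sum_n s_n(\mathfrak{T}_m(T))\leq \liminf_{N\to\infty}\|\mathfrak{T}_m(T_N)\|_{\cS^1}\leq C\|T\|_{\cS^1},
\]
so $m\in\mathfrak{M}_1$ with $\|m\|_{\mathfrak{M}_1}\leq C$. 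The only mildly delicate point is the identification of the limit used in the Fatou step, which can be dispatched exactly as in the completeness proof for $\mathfrak{M}_p$ given earlier in the paper; so no genuine obstacle arises.
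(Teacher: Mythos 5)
Your proof is correct and takes essentially the same approach as the paper: the forward direction is the immediate rank-one computation, and the reverse direction uses the singular value decomposition with the triangle inequality on finite-rank truncations. The only difference is that you spell out the density-extension step via the $\cS^2$-boundedness, Weyl's inequality for singular values, and Fatou, whereas the paper simply asserts that boundedness on the dense subspace of finite-rank operators extends; your version is a bit more explicit but not a genuinely different route.
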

\begin{proof}
Since $\mathcal{A}_{\F_\sigma(m)}^{f,g}=\mathfrak{T}_{m}(f\otimes g)$ it follows for any $m\in \mathfrak{M}_1$ that
\[
\|\mathcal{A}_{\F_\sigma(m)}^{f,g}\|_{\cS^1}\leq  \|m\|_{\mathfrak{M}_1}\|f\otimes g\|_{\cS^1}=\|m\|_{\mathfrak{M}_1}\|f\|_{L^2}\|g\|_{L^2}.
\]

For the other direction, we note that for any finite rank operator $T$ it follows from the singular value decomposition and the triangle inequality that
\[
\|\mathfrak{T}_m(T)\|_{\cS^1}\leq \sum_{n=1}^N |s_n(T)|\|\mathcal{A}_{\F_\sigma(m)}^{e_n,\eta_n}\|_{\cS^1}\leq C\sum_{n=1}^N |s_n(T)|= C\|T\|_{\cS^1}.
\]
This shows that $\mathfrak{T}_m$ is a bounded linear operator on a dense subspace of $\cS^1$, and thus has an extension to bounded linear map on $\cS^1$. This implies that $m\in \mathfrak{M}_1$
\end{proof}
Though we cannot show that the trace class multipliers coincide with the classical $L^1$ multipliers there are some similarities, such that any $m\in \mathfrak{M}_1$ has to be a bounded continuous function on phase space.
\begin{prop}\label{lem:ContBnd}
If $m\in \mathfrak{M}_1$, then $m\in C_b(\R^{2d})$.
\end{prop}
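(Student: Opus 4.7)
The plan is to exploit two facts already in the paper: (i) $\mathfrak{M}_1 \hookrightarrow \mathfrak{M}_2 \cong L^\infty(\R^{2d})$ by Corollary \ref{cor:embedding}, which will give the boundedness part for free; and (ii) the quantum Riemann--Lebesgue lemma of Werner, which says that the Fourier--Wigner transform of any trace class operator lies in $C_0(\R^{2d})$. Combining these by testing $\mathfrak{T}_m$ on a single well-chosen rank-one trace class operator will force $m$ to agree almost everywhere with a continuous function.

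Concretely, I would first note that $m \in \mathfrak{M}_1 \subseteq \mathfrak{M}_2$ yields $m \in L^\infty(\R^{2d})$, handling the boundedness half of $C_b$. Next, take the distinguished trace class operator $T = \varphi_0 \otimes \varphi_0$, where $\varphi_0(t) = 2^{d/4}e^{-\pi|t|^2}$ is the $L^2$-normalized Gaussian; since $\|T\|_{\cS^1} = \|\varphi_0\|_{L^2}^2 = 1$, we have $T \in \cS^1$, and by \eqref{eq:FW_Ambi} together with \eqref{Ambi-Gauss},
\[
\F_W(T)(z) = \mathcal{A}(\varphi_0,\varphi_0)(z) = e^{-\pi|z|^2/2}.
\]
Since $m \in \mathfrak{M}_1$, the operator $\mathfrak{T}_m(T)$ lies in $\cS^1$, and by the very definition of the Fourier--Wigner multiplier
\[
\F_W(\mathfrak{T}_m(T))(z) = m(z)\,\F_W(T)(z) = m(z)\,e^{-\pi|z|^2/2}
\]
as an $L^2$-identity on phase space.

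The final step is to apply Werner's quantum Riemann--Lebesgue lemma to $\mathfrak{T}_m(T) \in \cS^1$: its Fourier--Wigner transform is a continuous function on $\R^{2d}$ (vanishing at infinity, though we only need continuity here). Hence the function $z \mapsto m(z)\,e^{-\pi|z|^2/2}$ has a continuous representative. Dividing by the Gaussian, which is continuous and nowhere zero, shows that $m$ itself has a continuous representative; combined with $m \in L^\infty$ this yields $m \in C_b(\R^{2d})$.

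The only subtlety I anticipate is that the identity $\F_W(\mathfrak{T}_m(T)) = m\,\F_W(T)$ holds a priori only in an $L^2$ sense, so one obtains continuity of $m$ only after modification on a Lebesgue-null set; once that representative is selected, the identity becomes a genuine equality of continuous (respectively, essentially bounded measurable) functions and the proof is complete.
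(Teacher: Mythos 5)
Your proof is correct, and it is cleaner than the paper's. Both arguments proceed by feeding a Gaussian-type trace class operator into $\mathfrak{T}_m$ and extracting continuity from a Riemann--Lebesgue-type statement, but the routes differ in a small, instructive way. You take the rank-one operator $T=\varphi_0\otimes\varphi_0\in\cS^1$, observe that $\mathfrak{T}_m(T)\in\cS^1$ because $m\in\mathfrak{M}_1$, and then invoke the \emph{quantum} Riemann--Lebesgue lemma (Werner's result, cited in the paper, that $\F_W$ maps $\cS^1$ into $C_0(\R^{2d})$) directly on $\mathfrak{T}_m(T)$ to conclude that $m(z)e^{-\pi|z|^2/2}$ is continuous. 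The paper instead uses the Weyl quantisation $L_{\Gamma_1}$ of a Gaussian (a Schwartz operator, not rank-one), convolves $\mathfrak{T}_m(L_{\Gamma_1})$ once more with $L_{\Gamma_1}$ to land in $L^1(\R^{2d})$ via Werner--Young's inequality, and only then applies the \emph{classical} Riemann--Lebesgue lemma to get $m(z)e^{-2\pi|z|^2}\in C_0$. Your version saves the extra convolution and the detour through $L^1(\R^{2d})$, at the cost of using nothing beyond what the paper has already stated; the paper's route is likely chosen because the $\mathfrak{T}_m(L_{\Gamma_\varepsilon})\star L_{\Gamma_\varepsilon}$ machinery is reused verbatim in the discussion and Lemma \ref{lem:ApproxId} immediately following the proposition.

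Two small remarks. First, your appeal to Corollary \ref{cor:embedding} for boundedness is unnecessary: by the paper's definition $\mathfrak{M}_1\subseteq L^\infty(\R^{2d})$ from the outset, which is exactly what the paper invokes in its final line. Second, your caveat about the identity $\F_W(\mathfrak{T}_m(T))=m\,\F_W(T)$ holding only almost everywhere until a representative is chosen is the right thing to say and matches how the paper also treats $m$ implicitly (as an $L^\infty$ equivalence class that admits a continuous representative).
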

\begin{proof}
Let $m\in\mathfrak{M}_1$ and $\Gamma_1(z)=\exp{(-\pi|z|^2)}$. Consider the Weyl quantisation of $\Gamma_1$ given by
\[
L_{\Gamma_1}=\int_{\R^{2d}}\F_\sigma(\Gamma_1)(z)\rho(z)\,dz=\int_{\R^{2d}}e^{-\pi |z|^2}\rho(z)\,dz\in\fS.
\]
As $m\in\mathfrak{M}_1$, it follows by Werner-Young's inequality that
\[
\mathfrak{T}_m(L_{\Gamma_1})\star L_{\Gamma_1}\in L^1(\R^{2d}).
\]
Taking the symplectic Fourier transform, it follows that
\[
F(z)=\F_{\sigma}(\mathfrak{T}_m(L_{\Gamma_1})\star L_{\Gamma_1})=m(z)e^{-2\pi|z|^2}\in C_0(\R^{2d}),
\]
by the Riemann-Lebesgue lemma. In particular, as the Gaussian does not vanish on $\R^{2d}$, we conclude that
\[
m(z)=e^{2\pi|z|^2}F(z)\in C(\R^{2d}).
\]
Since $m\in L^\infty(\R^{2d})$ by assumption, it follows that $m\in C_b(\R^{2d})$.
\end{proof}

The proof of Proposition \ref{lem:ContBnd} holds true whenever $L_{\Gamma_1}$ is replaced by
\[
\Gamma_\varepsilon(z)=\varepsilon^{-2d}e^{-\pi\frac{|z|^2}{\varepsilon^2}},
\]
for any $\varepsilon>0$. Moreover,
 the $L^1$ function defined by $\mathfrak{T}_m(L_{\Gamma_\varepsilon})\star L_{\Gamma_\varepsilon}$ can be written as
 \[
 \F_\sigma(m)*(L_{\Gamma_\varepsilon}\star L_{\Gamma_\varepsilon}),
 \]
 by the associativity of the convolutions. Here $\F_\sigma(m)\in \sS'(\R^{2d})$ is the distributional symplectic Fourier transform of the multiplier $m\in C_b(\R^{2d})$. By \cite[Proposition $9.10$]{Folland-Real}, it follows that
 \[
 \mathfrak{T}_m(L_{\Gamma_\varepsilon})\star L_{\Gamma_\varepsilon}= \F_\sigma(m)*(L_{\Gamma_\varepsilon}\star L_{\Gamma_\varepsilon})\in C^\infty(\R^{2d})\cap L^1(\R^{2d})\subseteq C^\infty_0(\R^{2d}).
 \]
 In particular
 \[
 \int_{\R^{2d}} \mathfrak{T}_m(L_{\Gamma_\varepsilon})\star L_{\Gamma_\varepsilon}(z)\,dz=\F_\sigma( \mathfrak{T}_m(L_{\Gamma_\varepsilon})\star L_{\Gamma_\varepsilon})(0)=m(0),
 \]
 which is completely independent of $\varepsilon$.
 Since $L_{\Gamma_\varepsilon}\star L_{\Gamma_\varepsilon}$ is an approximate identity as $\varepsilon\to 0$, it is also true that
 \[
 \langle \F_\sigma(m)*(L_{\Gamma_\varepsilon}\star L_{\Gamma_\varepsilon}),\Psi\rangle\to \langle \F_\sigma(m),\Psi\rangle,
 \]
 for any $\Psi\in \sS(\R^{2d})$ as $\varepsilon\to 0$. Since $L^1$ can be embedded into $\mathscr{M}(\R^{2d})=C_0'(\R^{2d})$, it would follow by compactness that $\F_\sigma(m)$ is a complex Radon measure if we could show that the operators $ \mathfrak{T}_m(L_{\Gamma_\varepsilon})\star L_{\Gamma_\varepsilon}$ are uniformly bounded in the $L^1$-norm as $\varepsilon\to 0$. Unfortunately, we cannot apply Werner-Young's inequality as the operators $L_{\Gamma_\varepsilon}$ are not uniformly bounded in trace norm. This is the content of the next lemma.
 
 \begin{lemma}\label{lem:ApproxId}
For any $\varepsilon>0$, let $\Gamma_{\varepsilon}$ denote the dilated Gaussian given by
\[
\Gamma_\varepsilon(z)=\varepsilon^{-2d}e^{-\pi\frac{|z|^2}{\varepsilon^2}}.
\]
The operator $L_{\Gamma_\varepsilon}$ is a trace class operator with the property that $\{L_{\Gamma_\varepsilon}\star L_{\Gamma_\varepsilon}\}_{\varepsilon>0}$ is an approximate identity. Moreover, the operator $L_{\Gamma_\varepsilon}$ is a positive operator if and only if $\varepsilon^2\geq 2^{-1}$, in which case
\[
\|L_{\Gamma_\varepsilon}\|_{\cS^1}=1.
\]
If $\varepsilon^2< 2^{-1}$, then
\[
\|L_{\Gamma_\varepsilon}\|_{\cS^1}\geq (2\varepsilon^2)^{-\frac{d}{2}}.
\]
\end{lemma}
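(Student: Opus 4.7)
The plan is to exploit the Gaussian symbol representation of $L_{\Gamma_\varepsilon}$ via Proposition \ref{ConvProp-Distribution}. Since $\Gamma_\varepsilon\in\sS(\R^{2d})$, we immediately have $L_{\Gamma_\varepsilon}\in\fS\subseteq\cS^1$; moreover $\F_W(L_{\Gamma_\varepsilon})=\F_\sigma(\Gamma_\varepsilon)(z)=e^{-\pi\varepsilon^2|z|^2}$ by Proposition \ref{ConvProp-Distribution}(iv). Applying $\F_\sigma$ to the operator-operator convolution and using Proposition \ref{ConvProp-Distribution}(iii) gives $\F_\sigma(L_{\Gamma_\varepsilon}\star L_{\Gamma_\varepsilon})=e^{-2\pi\varepsilon^2|z|^2}$, whence
\[
L_{\Gamma_\varepsilon}\star L_{\Gamma_\varepsilon}(z)=(2\varepsilon^2)^{-d}e^{-\pi|z|^2/(2\varepsilon^2)},
\]
a Gaussian with unit $L^1$ mass that concentrates at the origin as $\varepsilon\to 0$, hence an approximate identity.

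For positivity when $\varepsilon^2\geq 1/2$, I would start from the identity $\F_W(\varphi_0\otimes\varphi_0)(z)=e^{-\pi|z|^2/2}$, which via Proposition \ref{ConvProp-Distribution}(iv) identifies $L_{\Gamma_{1/\sqrt{2}}}=\varphi_0\otimes\varphi_0$ and handles the boundary case. When $\varepsilon^2>1/2$, solving $\F_\sigma(\psi)\cdot e^{-\pi|z|^2/2}=e^{-\pi\varepsilon^2|z|^2}$ gives $\psi=\Gamma_{\sqrt{\varepsilon^2-1/2}}$, and Proposition \ref{ConvProp-Distribution}(iii) together with the injectivity of $\F_W$ yields
\[
L_{\Gamma_\varepsilon}=\Gamma_{\sqrt{\varepsilon^2-1/2}}\star(\varphi_0\otimes\varphi_0)=\int_{\R^{2d}}\Gamma_{\sqrt{\varepsilon^2-1/2}}(z)\alpha_z(\varphi_0\otimes\varphi_0)\,dz.
\]
The integrand is a pointwise non-negative multiple of the positive rank-one operator $\alpha_z(\varphi_0\otimes\varphi_0)$, so $L_{\Gamma_\varepsilon}$ is positive; its trace equals $\int\Gamma_{\sqrt{\varepsilon^2-1/2}}(z)\,dz=1$, which coincides with $\|L_{\Gamma_\varepsilon}\|_{\cS^1}$ by positivity.

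For the non-positivity when $\varepsilon^2<1/2$ and the lower bound, the key computation uses the Plancherel identity for $\F_W$:
\[
\langle L_{\Gamma_\varepsilon}\varphi_0,\varphi_0\rangle=\tr\bigl(L_{\Gamma_\varepsilon}(\varphi_0\otimes\varphi_0)\bigr)=\int_{\R^{2d}}e^{-\pi\varepsilon^2|z|^2}e^{-\pi|z|^2/2}\,dz=(\varepsilon^2+\tfrac{1}{2})^{-d}.
\]
Since $\tr(L_{\Gamma_\varepsilon})=\int\Gamma_\varepsilon=1$, positivity of $L_{\Gamma_\varepsilon}$ would force the operator norm to be at most $1$, contradicting $(\varepsilon^2+\tfrac{1}{2})^{-d}>1$ whenever $\varepsilon^2<1/2$. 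The lower bound follows from the Schatten inclusion $\cS^1\subseteq\cS^2$ and Pool's isometry:
\[
\|L_{\Gamma_\varepsilon}\|_{\cS^1}\geq\|L_{\Gamma_\varepsilon}\|_{\cS^2}=\|\F_W(L_{\Gamma_\varepsilon})\|_{L^2}=(2\varepsilon^2)^{-d/2}.
\]
The main obstacle I anticipate is justifying the convex-combination representation rigorously when $\varepsilon^2>1/2$: one must verify that the weak integral defines a $\cS^1$-convergent Bochner-type integral (via Werner-Young applied to $\Gamma_{\sqrt{\varepsilon^2-1/2}}\in L^1$ and $\varphi_0\otimes\varphi_0\in\cS^1$) and match it with $L_{\Gamma_\varepsilon}$ through the uniqueness of Fourier-Wigner transforms in $\sS'(\R^{2d})$.
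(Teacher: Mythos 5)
Your proof is correct. The trace class claim, the approximate identity computation, the trace evaluation when positive, and the $\cS^2$ lower bound all match the paper's arguments (the paper writes $\|L_{\Gamma_\varepsilon}\|_{\cS^2}=\|\Gamma_\varepsilon\|_{L^2}$ using the Weyl isometry, you write $\|\F_W(L_{\Gamma_\varepsilon})\|_{L^2}$ using Pool's isometry; these are the same number by Plancherel). The one place you genuinely diverge is the positivity characterisation, and your route is a nice alternative. The paper simply cites a theorem of Cordero, de~Gosson and Nicola giving a general criterion for positivity of Weyl quantisations of Gaussians $\Gamma_\Sigma$ in terms of the smallest eigenvalue of $\Sigma$, then specialises to $\Sigma=\mathrm{diag}(\varepsilon^2/2\pi)$. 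You instead prove the criterion from scratch in this radial case: you observe $L_{\Gamma_{1/\sqrt{2}}}=\varphi_0\otimes\varphi_0$ via $\F_W$-uniqueness, write $L_{\Gamma_\varepsilon}=\Gamma_{\sqrt{\varepsilon^2-1/2}}\star(\varphi_0\otimes\varphi_0)$ when $\varepsilon^2>1/2$ (a mixed-state localisation operator with non-negative symbol, hence positive by the weak-integral definition of $\star$), and for $\varepsilon^2<1/2$ you refute positivity by computing $\langle L_{\Gamma_\varepsilon}\varphi_0,\varphi_0\rangle=(\varepsilon^2+1/2)^{-d}>1$, which is incompatible with a positive operator of trace $1$. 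Your argument is more self-contained and elementary; the paper's is shorter by outsourcing to the cited criterion, which also covers non-radial Gaussians. The "obstacle" you flagged is already dispatched by Werner--Young: $\Gamma_{\sqrt{\varepsilon^2-1/2}}\in L^1$, $\varphi_0\otimes\varphi_0\in\cS^1$, so the convolution converges in $\cS^1$, and injectivity of $\F_W$ on $\fS'$ identifies it with $L_{\Gamma_\varepsilon}$.
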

\begin{proof}
The trace class property follows from the fact that $\Gamma_\varepsilon\in\sS(\R^{2d})$, while the approximate identity follows from the fact that
\[
L_{\Gamma_\varepsilon}\star L_{\Gamma_\varepsilon}(z)=\Gamma_\varepsilon*\Gamma_\varepsilon(z),
\]
and that $\Gamma_\varepsilon$ is an approximate identity as $\varepsilon\to 0$. For positivity, it follows by \cite[Theorem $21$]{Cordero_DeGosso_Nicola_19} that the Weyl quantisation of the Gaussian
\[
\Gamma_{\Sigma}(z)=(2\pi)^{-d}\sqrt{\det{\Sigma^{-1}}}e^{-\frac{1}{2}\langle \Sigma^{-1}z,z\rangle}
\] is a positive trace class operator if and only if
\[
\frac{1}{4\pi}\leq \lambda_\text{min},
\]
where $\lambda_{\text{min}}$ is the smallest eigenvalue of $\Sigma$. Here $\Sigma$ is a positive symmetric $2d\times 2d$ matrix. Note that $\Gamma_\varepsilon=\Gamma_\Sigma$ with
\[
\Sigma=\mathrm{diag}\left(\frac{\varepsilon^2}{2\pi}\right),
\]
so $L_{\Gamma_\varepsilon}$ is positive if and only if
\[
\frac{1}{4\pi}\leq \frac{\varepsilon^2}{2\pi}.
\]
If $L_{\Gamma_\varepsilon}$ is positive, then
\[
\|L_{\Gamma_\varepsilon}\|_{\cS^1}=\tr(L_{\Gamma_\varepsilon})=\F_W(L_{\Gamma_\varepsilon})(0)=\F_\sigma(\Gamma_{\varepsilon})(0)=\int_{\R^{2d}}\Gamma_\varepsilon(z)\,dz=1.
\]
On the other hand, by the inclusion of Schatten classes it follows that
\[
\|L_{\Gamma_\varepsilon}\|_{\cS^1}\geq \|L_{\Gamma_\varepsilon}\|_{\cS^2}=\|\Gamma_\varepsilon\|_{L^2}=(2\varepsilon^2)^{-\frac{d}{2}}.
\]
\end{proof}
\begin{remark}
Note that for any $\Phi,\Psi\in\sS(\R^{2d})$, it follows that
\[
\langle L_{\Gamma_\varepsilon}\Phi,\Psi\rangle=\langle \Gamma_{\varepsilon},\mathcal{W}(\Psi,\Phi)\rangle\xrightarrow{\varepsilon\to 0}\langle \delta,\mathcal{W}(\Psi,\Phi)\rangle=\overline{\mathcal{W}(\Psi,\Phi)(0)}=2^{d}\langle P\Phi,\Psi\rangle,
\]
which shows that $L_{\Gamma_\varepsilon}$ converges to $2^dP\notin \cS^1$ in the weak operator topology as $\varepsilon\to 0$.
\end{remark}

 Let us note a second property of $\F_\sigma(m)$ for $m\in \mathfrak{M}_1$ which is also true for measures, namely for any $\Psi,\Phi\in\sS(\R^{2d})$, the $C^\infty$ function defined by
 \[
 \F_\sigma(m)*(\Psi*\Phi)=\F_\sigma(m)*(L_\Psi\star L_\Phi)=\mathfrak{T}_m(L_\Psi)\star L_{\Phi},
 \]
is in $L^1(\R^{2d})$ by Werner-Young's inequality. However, If $\tau\in\sS'(\R^{2d})$ and $\Phi\mapsto \tau*\Phi$ maps $\mathscr{S}(\R^{2d})$ to $L^1(\R^{2d})$, then it is not necessarily true that $\tau\in\mathscr{M}(\R^{2d})$. Consider for instance
$\tau=\partial^\alpha\delta$ for some multi-index $\alpha\in \mathbb{N}_0^{2d}$. Then
\[
\tau*\Phi=\partial^\alpha\Phi\in L^1(\R^{2d}),
\]
as the Schwartz class is closed under differentiation. The symplectic Fourier transform of $\tau$ is given by
\[
\F_\sigma(\tau)(\zeta)=(-2\pi i J\zeta)^\alpha,
\]
which is not a bounded function on $\R^{2d}$, and thus is excluded by Proposition \ref{lem:ContBnd}. Here $J$ is the standard symplectic matrix on $\R^{2d}$. 

Another example of a tempered distribution which maps the Schwartz class to $L^1$ through convolution is the $C_b(\R^{2d})$ function
\begin{equation}\label{eq:TauSymbol}
\tau(z)=\frac{1}{2i}\left(e^{\pi i \frac{d}{2}}e^{-\pi i |z|^2}-e^{-\pi i \frac{d}{2}}e^{\pi i |z|^2}\right).
\end{equation}
The function $\tau$ corresponds to the distributional symplectic Fourier transform of the function $z\mapsto \sin(\pi |z|^2)$. We refer to \cite[Theorem $7.6.1$]{Hormander-ALPDO1} or \cite[Proposition $4.2$]{Wolff} for the distributional Fourier transform of complex Gaussians. The derivatives of $\sin(\pi |z|^2)$ are of at most polynomial growth, and thus, for any Schwartz function $\Phi\in\sS(\R^{2d})$, it follows that
\begin{align*}
\left|z^\alpha\frac{\partial^\beta}{\partial z^\beta}\Phi(z)\sin(\pi |z|^2)\right|\leq&\, \sum_{j\leq |\beta|}|z^\alpha P_j(z)\Phi^{(j)}(z)|,
\end{align*}
for any $\alpha,\beta\in \mathbb{N}^{2d}_0$, where $P_j$ is a polynomial for each $j$. This shows that $\Phi(z)\sin(\pi |z|^2)$ is a Schwartz function whenever $\Phi\in\sS(\R^{2d})$. From here we conclude that
\[
\Phi*\tau=\F_\sigma(\F_\sigma(\Phi)\sin(\pi|\cdot|^2))\in \sS(\R^{2d})\subseteq L^1(\R^{2d}),
\]
but $\tau\notin \mathscr{M}(\R^{2d})$. However, it is unclear whether or not $\sin(\pi |z|^2)$ is a trace class multiplier. This question is equivalent to asking if mixed-state localisation operator with symbol $\tau$ given by \eqref{eq:TauSymbol} is a trace class operator for any choice of window functions $f,g\in L^2(\R^d)$ by Lemma \ref{lem:TraceOnRank1}. We therefore end with an open question.
\begin{question}
For each $m\in\mathfrak{M}_1$, does there exist $\mu\in \mathscr{M}(\R^{2d})$ such that $m=\F_\sigma(\mu)$?
\end{question}

\section*{Acknowledgements}
This work was partially conducted during a research stay funded through the NSF grant DMS-2247185 at Stanford University in the Autumn of 2024. The author is grateful for the hospitality of Stanford University and Professor Eugenia Malinnikova. The author would also like to thank Josef Greilhuber for insightful discussions, and Professor Franz Luef and Associate Professor Sigrid Grepstad for their helpful comments on earlier versions of this manuscript.

\printbibliography

\end{document}